\newcommand{\vertiii}[1]{{\left\vert\kern-0.25ex\left\vert\kern-0.25ex\left\vert #1 
    \right\vert\kern-0.25ex\right\vert\kern-0.25ex\right\vert}}
\theoremstyle{plain} 
\newtheorem{theorem}{Theorem}[section]
\newtheorem{lemma}{Lemma}[section]
\newtheorem{corollary}{Corollary}[section]
\theoremstyle{definition} 
\newtheorem{example}{Example}[section]
\theoremstyle{remark} 
\newtheorem{remark}{Remark}[section]
\definecolor{c1}{rgb}{0,0,1} 
\definecolor{c2}{rgb}{0,0.3,0.9} 
\definecolor{c3}{rgb}{0.3,0,0.9} 
\numberwithin{equation}{section}
\def\namedlabel#1#2{\begingroup
    #2%
    \def\@currentlabel{#2}%
    \phantomsection\label{#1}\endgroup
}
\begin{document}
\title{\textbf{On a Non-Uniform $\alpha$-Robust IMEX-L1 Mixed FEM for Time-Fractional PIDEs}}

\author{\textsc{Lok Pati Tripathi }\thanks{\href{mailto: lokpati@iitgoa.ac.in}{lokpati@iitgoa.ac.in}}}
\author{\textsc{Aditi Tomar}\thanks{\href{mailto: aditi183321002@iitgoa.ac.in}{aditi183321002@iitgoa.ac.in}} }
\affil{School of Mathematics and Computer Science, Indian Institute of Technology Goa, Goa-403401, India.}
\author{\textsc{Amiya K. Pani }\thanks{\href{mailto: amiyap@goa.bits-pilani.ac.in}{amiyap@goa.bits-pilani.ac.in}}}
\affil{Department of Mathematics, BITS-Pilani, KK Birla Goa Campus, Goa-403726, India.}

\date{} 

\maketitle 

\begin{abstract}
A non-uniform implicit-explicit L1 mixed finite element method (IMEX-L1-MFEM) is investigated for a class of time-fractional partial integro-differential equations (PIDEs) with space-time dependent coefficients and non-self-adjoint elliptic part. The proposed fully discrete method combines an IMEX-L1 method on a graded mesh in the temporal variable with a mixed finite element method in spatial variables. The focus of the study is to analyze stability results and to establish optimal error estimates, up to a logarithmic factor, for both the solution and the flux in $L^2$-norm when the initial data $u_0\in H_0^1(\Omega)\cap H^2(\Omega)$. Additionally, an error estimate in $L^\infty$-norm is derived for 2D problems. All the derived estimates and bounds in this article remain valid as $\alpha\to 1^{-}$, where $\alpha$ is the order of the Caputo fractional derivative. Finally, the results of  several  numerical experiments conducted at the end of this paper are confirming our theoretical findings.
\end{abstract}
\textbf{Keywords:} Caputo fractional derivative; IMEX-L1 mixed FEM; non-self-adjoint elliptic operator; space-time dependent coefficients, graded mesh; optimal error analysis.

\vspace{1em}
\noindent
\textbf{AMS Subject Classification:} 65M60, 65M12, 65M15, 35K30. 
 \section{Introduction}\label{section1}
This paper develops and analyzes an IMEX-L1 mixed finite element method (MFEM) for the following class of time-fractional partial integro-differential equations (PIDEs):

\begin{align}\label{pide}
\begin{cases}
\partial^{\alpha}_{t} u +  \mathcal{L}u - \lambda\mathcal{I}u \;=\; f & \quad \text{in} \quad \Omega\times J,\\[5pt]
u ~= ~0 &\quad \text{on}\quad \partial\Omega\times J, \\[5pt]
u(\cdot,\;0) ~= ~u_0 &\quad \text{in}\quad\Omega,
\end{cases}
\end{align}
where $\partial^{\alpha}_{t} u$ is the Caputo fractional derivative of order $\alpha,\; 0<\alpha < 1,$ with respect to $t$,
\begin{align}
    \label{caputoderivative} \partial^{\alpha}_{t} u:&=\;\int_{0}^{t}k_{1-\alpha}(t-s)\partial_{s}u\;ds\quad \text{with kernel }\; k_{\beta}(t):=\;\frac{t^{\beta-1}}{\Gamma{(\beta)}},\;t>0,\; \beta>0,\\
    \label{EllipticOperator} \mathcal{L}u:&=\;  -\nabla\cdot(\boldsymbol{A}\nabla u) + \boldsymbol{b}\cdot\nabla u + cu, \quad \text{and } \quad
    \mathcal{I}u(\cdot,t):=\; \int_{\Omega}u(\boldsymbol{y},t)g(\cdot,\boldsymbol{y})\;d\boldsymbol{y}.
\end{align}
 Here, $\partial\Omega$ is the boundary of a convex polygonal or polyhedral bounded domain $\Omega\subset\mathbb{R}^{d}(d=1,2,3),$ $\lambda\in \mathbb{R}$ and $J:=(0,\; T]$ for $0<T<\infty$. The symbol $\Gamma(\cdot)$ represents the gamma function. For the case of $\lambda = 0$, the problem (\ref{pide}) reduces to a time-fractional linear general parabolic PDE with variable coefficients. To ensure the well-posedness and regularity results for the solution $u$ of the problem (\ref{pide}), appropriate assumptions on $\boldsymbol{A}:\Omega\times J\to \mathbb{R}^{d\times d}$, $\boldsymbol{b}:\Omega\times J\to \mathbb{R}^{d}$, $c:\Omega\times J\to \mathbb{R}$, $g:\Omega\times\Omega\to\mathbb{R}$, $f:\Omega\times J\to \mathbb{R}$, and the initial condition $u_0$ are mentioned in the Theorem~\ref{Regularity_condition} of Section~\ref{section2}. For a.e. $(\boldsymbol{x},\;t)\in\Omega\times J$, $\boldsymbol{A}(\boldsymbol{x},\;t)$ is assumed to be a real symmetric and uniformly positive definite matrix in the sense that there exists a positive constant $\kappa_{0}$ and $\kappa_{1}$ such that
\begin{align*}
    &\kappa_{0}\;\|\boldsymbol{y}\|^{2} \;\leq\; \boldsymbol{y}^{T}\boldsymbol{A}\boldsymbol{y}\; \leq\; \kappa_{1}\;\|\boldsymbol{y}\|^{2} \quad \forall~\boldsymbol{y}\in \mathbb{R}^{d}\quad \text{in }~ \Omega\times J.
\end{align*}
 
In option pricing problems under Merton's and Kou's jump-diffusion models \cite{ MR3633783, MR3959545, kou2002jump, Merton1976}, there is a presence of a non-local integral operator $\mathcal{I}: L^{1}(\Omega)\to L^{\infty}(\Omega)$ which is characterized by a specific kernel $g(\boldsymbol{x},\boldsymbol{y})$. Compared to fully implicit methods, the main computational advantage of IMEX methods is that, at each time level, sparse matrix inversion is sufficient instead of dealing with dense matrices \cite{MR3633783, MR2873249}. The well-posedness of problem (\ref{pide}), with $\lambda \neq 0$, can be established by utilizing the properties of the non-local integral operator $\mathcal{I}$ and employing techniques from \cite{MR4290515, MR3814402} (as described in Section~\ref{section2}).

In the literature, several numerical approaches have been developed to compute an approximate solution to the problem (\ref{pide}). Notably, the error analysis of the L1 scheme, which accounts for the initial singularity, has been investigated using different temporal mesh strategies, see, a recent survey \cite{MR4499608}. In \cite{MR3894161, MR3957890}, a uniform temporal mesh with $\Delta t = T/N$ was employed to analyze the error. On the other hand, in \cite{ MR3957889, MR4090355, MR3639581}, graded time grids of the form $t_n = (n/N)^{\gamma} T$ with corresponding time step size $\Delta t_n = t_n - t_{n-1}$ have been utilized for error analysis, where $\gamma \geq 1$ is the grading parameter, and $N$ is the number of grid points in the time direction. These studies contribute to the understanding of error behavior in the L1 scheme by considering different temporal discretization approaches, highlighting the impact of grid structures on the accuracy of numerical solutions to the problem (\ref{pide}). Furthermore, for spatial discretizations, there are numerous numerical schemes, including finite difference \cite{ MR3790081, MR2349193, MR3639581}, finite element  \cite{MR3742890, MR3335216, MR3033018, MR4125980, MR3904430, MR3802434},
finite volume element method \cite{MR3649431, MR3834443}, and for up-to-date references, please refer to \cite{MR4572861}. One recent study by Stynes et al. \cite{MR3639581} analyzed the L1 formula, combined with finite difference methods (FDM) in the spatial direction for approximating the problem (\ref{pide}), where $\mathcal{L}u= -u_{xx}+cu$. Subsequently, Mustapha \cite{MR4090355} has discussed the L1 method for a reaction subdiffusion problem and derived second-order convergence in time. In \cite{TOMAR2024137}, the authors have discussed optimal convergence analysis of an IMEX-L1 method with a general elliptic operator of the form (\ref{EllipticOperator}) having time and space-dependent coefficients.  

Mixed methods offer a simultaneous approximation of both the solution and the flux, representing a widely utilized approach for obtaining approximate solutions to systems that encompass multiple unknowns (see, \cite{MR1115205}, \cite{MR483555}). Johnson and Thom\'{e}e \cite{MR0610597} have investigated semi-discrete mixed finite element method (MFEM) for parabolic and non-stationary Stokes equations and provided optimal convergence in velocity and pressure in $L^{\infty}((L^2)^2)$ and $L^{\infty}(L^2)$ norms, see [\cite{MR0610597}, Theorem 2.1]. In \cite{MR2551194}, Sinha et al. have obtained a priori $L^2$-error estimates for pressure and velocity. Goswami and Pani, in \cite{MR2823474}, have obtained a priori error estimates for
semi-discrete Galerkin approximations to a general second-order linear parabolic initial and
boundary value problem. 

 In the context of MFEM for time-fractional diffusion problems, limited
publications are available in the literature \cite{MR3592147, MR3816184, MR4107214, karaa2023mixed,MR4108629}. In \cite{MR3592147}, the authors have discussed a fully-discrete approximate scheme using a uniform mesh with conforming MFEMs and nonconforming MFEMs for the problem (\ref{pide}) with $\mathcal{L} = -\Delta$. They have also established error analysis under higher regularity assumptions on the solution $u$, that is, at least when $u \in L^{\infty}(H^4
)\cap W^{1,\infty}(H^2)$. Karaa \cite{MR3816184} has proposed a unified energy-based analysis of error estimates for the semi-discrete mixed finite element method for the sub-diffusion problem with $\mathcal{L}=- \Delta$ using the properties of the inverse of the elliptic operator. Subsequently, he has established the convergence analysis of a fully discrete method by using a semigroup-type approach and exploiting the properties of the inverse of the associated elliptic operator in \cite{MR4107214}. Recently, in \cite{karaa2023mixed}, the authors have exploited MFEM for the time-fractional Fokker-Planck equation and obtained point-wise in time-optimal $L^2$-norm estimate for the solution. However, these works are limited to time-independent coefficients. In \cite{MR4108629}, Karaa and Pani have proposed a mixed finite element method for a class of time-fractional diffusion problems, where $\mathcal{L}$ is a second-order self-adjoint elliptic operator with time-dependent
coefficients and have exploited energy arguments to obtain optimal error estimates in semi-discrete cases only. In contrast to the aforementioned literature, in our case, $\mathcal{L}$ represents a general non-self-adjoint linear uniformly elliptic operator with time-dependent coefficients. Most of the papers mentioned in this paragraph are not $\alpha$ robust in the sense that the constants that appeared in the error estimates blow up as $\alpha \to 1^-$. Considering the initial singularity, this combination presents challenges in analyzing the standard nonuniform approximations of equation (\ref{pide}). This motivates extending the framework to provide optimal $L^2$-norm error estimates for both the solution $u$ and the flux $\boldsymbol{\sigma}$ in a fully-discrete method applied to the problem (\ref{pide}), complementing the work in \cite{MR4108629} and providing $\alpha$-robustness.

To the best of our knowledge, when the initial data $u_0\in H_0^1(\Omega)\cap H^2(\Omega)$, there is hardly any literature available focusing on the optimal convergence analysis of the non-uniform (IMEX) L1 mixed FEM for time-fractional PDEs or PIDEs that include a general elliptic operator, as given by equation (\ref{EllipticOperator}), with coefficients varying in both time and space. This article aims to bridge the existing gap by establishing the stability and optimal convergence analysis of the non-uniform IMEX-L1-MFEM for the problem (\ref{pide}). The main contributions of the work can be summarized as follows:
\begin{itemize}
    \item A precise discrete fractional Gr\"{o}nwall inequality (see Theorem~\ref{DFGI}) is derived, which allows us to establish the stability and convergence of the proposed IMEX-L1-MFEM on a graded mesh for general time-fractional PIDEs (\ref{pide}), under a reasonable time-step condition. 
    \item Stability results for the proposed method applied to the problem (\ref{pide}) are established (see, Theorem~\ref{stabilitytheorem1} and Theorem~\ref{stabilitytheorem2}), with a particular attention to estimating the flux (see Lemma~\ref{dfdoi} and Theorem~\ref{stabilitytheorem2}) due to the dependence of the non-self-adjoint elliptic operator $\mathcal{L}$ in (\ref{pide}) on space and time variables and their impact on the discrete fractional derivative.
    \item Up to a logarithmic factor, optimal error estimates of order $O\left((h^{2} + N^{-\min(\gamma\alpha,\;2-\alpha)})\log_e(N) \right)$, $1\leq \gamma \leq \frac{2(2-\alpha)}{\alpha}$, for both the solution $u$ and the flux $\boldsymbol{\sigma}$ are derived in $L^2$-norm under the assumption $u_0 \in \dot{H}^2(\Omega) = H_0^1(\Omega)\cap H^2(\Omega)$,  where $\gamma $ is the grading parameter, $N+1$ denotes the number of grid points in the temporal direction and $h$ is the maximum diameter of finite elements (see, Theorem~\ref{L2H1errortheorem}). 
    \item Under the assumption of $u_0 \in \dot{H}^{2+\epsilon}(\Omega)$, $0<\epsilon < \frac{1}{2}$, an optimal order error estimate of order $O\big(h^{2} + N^{-\min(\gamma\alpha,\;2-\alpha)} \big)$, $\frac{(2-\alpha)}{\alpha} < \gamma \leq \frac{2(2-\alpha)}{\alpha}$, is obtained for both the solution $u$ and the flux $\boldsymbol{\sigma}$ in $L^2$-norm (see, Theorem~\ref{L2H1errortheorem}).
     \item A maximum norm error estimate is established for 2D problems(see, Corollary~\ref{Linftynormestimate}). 
     \item In light of recent developments emphasizing the robustness of error estimates as $\alpha\to 1^{-}$ (see \cite{MR4246866}, \cite{MR4287911}, and references therein), we have ensured that all the estimates and bounds established in this article remain valid under this limit.
\end{itemize}
The article primarily addresses linear problems, but there is potential to extend the proposed method and analysis to semi-linear time-fractional PDEs/PIDEs with appropriate modifications and regularity assumptions on the non-linear function. Here, $H^{m}(\Omega)$ and $ H_{0}^{m}(\Omega),\;m\in\mathbb{N}\cup\{0\},$ are standard Sobolev spaces equipped with standard norm $\|\cdot\|_{m}$, $H_{0}^{0}(\Omega)\;=\; H^{0}(\Omega)\;=\; L^{2}(\Omega)$, and $\|\cdot\|_{0} = \|\cdot\|$ with $(\cdot,\cdot)$ denotes the $L^{2}$-inner product, $\langle\cdot,\cdot\rangle$ is the duality paring between $H_{0}^{1}(\Omega)$ and its topological dual $(H_{0}^{1}(\Omega))^{\ast}:=H^{-1}(\Omega),$ $\langle \phi,\psi\rangle = (\phi,\psi) \;\forall \phi\in L^{2}(\Omega)$ and $\forall \psi\in H_{0}^{1}(\Omega).$ $H(div;\Omega) = \{ \boldsymbol{w}\in (L^2(\Omega, \mathbb{R}^d): \nabla \cdot \boldsymbol{w} \in L^2(\Omega) \}$ is a Hilbert space equipped with the norm $\| \boldsymbol{w}\|_{H(div; \Omega)}= (\|\boldsymbol{w}\|^2 + \| \nabla \cdot \boldsymbol{w}\|^2)^{\frac{1}{2}}$. For a given Hilbert space $\mathcal{H}$, $W^{m,p}(J;\mathcal{H}), m\in\mathbb{N}\cup\{0\}, 1\leq p \leq \infty$, $W^{0,2}(J;\mathcal{H}):=L^2(J;\mathcal{H})$, denotes the standard Bochner-Sobolev spaces. The Hilbert space $\dot{H}^s(\Omega),$ $s\geq 0,$ equipped with the induced norm $\|v\|_{\dot{H}^s(\Omega)} = \|(-\Delta)^{s/2} v\| = \sqrt{\sum_{j=1}^{\infty} \lambda_j^s (v,\; \phi_j)^2}$ is defined by 
$\dot{H}^s
(\Omega) = \left\{v \in L^2(\Omega) : \sum_{j=1}^{\infty} \lambda_j^s (v,\; \phi_j)^2\;<\;\infty\right\}$ (see, \cite{MR2249024}, \cite{MR4290515}, and \cite{MR4125980}), where $\{(\lambda_j,\;\phi_j)\}_{j=1}^{\infty}$ are the eigenpairs of the eigenvalue problem $-\Delta w = \lambda w\;\text{in}\;\Omega$, $w=0\;\text{on}\;\partial \Omega$, with multiplicity counted, and $\{\phi_j\}_{j=1}^{\infty}$ is an orthonormal basis for $L^2(\Omega)$. In particular, $\dot{H}^0
(\Omega)=L^2(\Omega)$, $\dot{H}^1
(\Omega)=H_0^1(\Omega)$, and $\dot{H}^2
(\Omega)=H_0^1(\Omega)\cap H^2(\Omega)$. Throughout this article, the symbol $C$ denotes a positive generic constant (not necessarily the same at each occurrence) which may depend on the grading parameter $\gamma$ and on the data, such as, $\alpha$, $\lambda$, $\Omega$, $\boldsymbol{A}$, $\boldsymbol{b}$, $c$, and $g$. However, it remains bounded as $\alpha\to 1^{-}$ and is independent of the approximation parameters, such as the maximum diameter $h$ of finite elements, the number of grid points $N+1$ in the temporal direction etc. 

The rest of the article is organized as follows. The section~\ref{section2} presents the variational formulation of the problem (\ref{pide}) and provides some results that will be utilized in the subsequent analysis. Non-uniform IMEX-L1-MFEM is proposed in section~\ref{section3}. Subsection~\ref {section4} presents a precise discrete fractional Gr\"{o}nwall inequality and applies it to analyze the stability of the proposed method. In subsection~\ref{section5}, optimal error estimates are derived. Numerical tests are conducted to validate and demonstrate the efficiency of the proposed method based on the theoretical analysis in section~\ref{section7}. Finally, this paper is concluded in section~\ref{section8}.

%
\section{Mixed formulation and some useful results}\label{section2}
Introducing the flux variable $\boldsymbol{\sigma} = \boldsymbol{A}\nabla u$, the problem (\ref{pide}) can be re-written as
\begin{align*}
& \boldsymbol{\sigma} - \boldsymbol{A}\nabla u =0\quad\text{in}\quad\Omega\times J,\\
&\partial^{\alpha}_t u - \nabla \cdot \boldsymbol{\sigma} + \boldsymbol{b}\cdot \boldsymbol{A}^{-1} \boldsymbol{\sigma} + cu -\lambda \mathcal{I}u=\;f \quad\text{in}\quad\Omega\times J,\\
& u = 0 \quad\text{on}\quad\partial\Omega\times J,\\
& u(\cdot,0) = u_0(\cdot) \quad\text{in}\quad\Omega.
\end{align*}
With $\boldsymbol{B}:= \boldsymbol{A}^{-1}$, $V:=L^2(\Omega)$, and  $\boldsymbol{W}:=H(div; \Omega)$, a mixed formulation of the problem (\ref{pide}) is to find the solution pair $(u,\boldsymbol{\sigma}):J\to V\times \boldsymbol{W}$, such that $u(0)=u_0$ and 
\begin{align}
\label{variation1} (\boldsymbol{B}\boldsymbol{\sigma}, \boldsymbol{w}) + ( u,  \nabla \cdot \boldsymbol{w})~ &=~0 \quad \forall \boldsymbol{w} \in \boldsymbol{W},\; t\in J,\\
    \label{variation2}\langle\partial_t^{\alpha} u,v \rangle  -(\nabla \cdot \boldsymbol{\sigma},v)+( \boldsymbol{b} \cdot \boldsymbol{B} \boldsymbol{\sigma},v) + (cu,v) ~&=~\lambda (\mathcal{I}u,v ) + \langle f,v\rangle  \quad \forall v \in V,\; t\in J,
\end{align}
where the boundary condition $u=0\;\text{on}\;\partial\Omega\times J$ is implicitly contained in (\ref{variation1}). As $\boldsymbol{A}$ is symmetric and uniformly positive definite, $\boldsymbol{B}$ is also symmetric and satisfy the following estimate 
\begin{align}\label{supdc1}
    &\beta_0\|\boldsymbol{y}\|^2 \leq \boldsymbol{y}^T\boldsymbol{B}\boldsymbol{y}\leq \gamma_0\|\boldsymbol{y}\|^2\quad \forall \boldsymbol{y}\in \mathbb{R}^d\quad \text{in }\; \Omega\times J,
\end{align}
for some positive constants $\beta_0$ and $\gamma_0$. Further, since $\boldsymbol{A}: J\to L^\infty(\Omega; \mathbb{R}^{d\times d})$ is Lipschitz continuous and positive definite, the relation $\boldsymbol{B}(t) - \boldsymbol{B}(s) = \boldsymbol{A}^{-1}(t)(\boldsymbol{A}(s) - \boldsymbol{A}(t))\boldsymbol{A}^{-1}(s),\;s,t\in J,$ implies that $\boldsymbol{B}: J\to L^\infty(\Omega; \mathbb{R}^{d\times d})$ is also Lipschitz continuous, i.e., there exists a positive constant $L_B$ such that
\begin{align}
    \label{Blips}& \|\boldsymbol{B}(t) - \boldsymbol{B}(s)\|_{L^\infty(\Omega; \mathbb{R}^{d\times d})} \leq L_B |t-s|\quad \forall t, s\in J.
\end{align}
As it is well known that the regularity results for the solution of time-fractional evolution equations like (\ref{pide}) play a significant role in the error analysis of any method (see, \cite{MR2832607, MR4023101, MR4054212, MR3957890, MR4125980, TOMAR2024137}), for our subsequent use, we state the following well-posedness and regularity results.
\begin{theorem}\label{Regularity_condition} Let $f \in W^{3,1}(J;L^2(\Omega))\cap W^{2,1}(J;H^1(\Omega))$, $u_0 \in \dot{H}^2(\Omega)=H_0^1(\Omega)\cap H^2(\Omega)$, 
\begin{align}\label{intbound}
  \|\mathcal{I}\phi\|_j\;\leq\;C_{\mathcal{I}}\|\phi\|_j\quad \forall \phi\in \dot{H}^j(\Omega)=H_0^j(\Omega),\;j=0,1, 
\end{align}
and for $k=0,1,2,3,$
\begin{align}\label{coefbounds}
        & \left\|\partial^k_t \boldsymbol{A}(\cdot,t)\right\|_{W^{1,\infty}(\Omega,\mathbb{R}^{d\times d})} \leq C_{\boldsymbol{A}},\;\left\|\partial^k_t\boldsymbol{b}(\cdot,t)\right\|_{L^{\infty}(\Omega,\mathbb{R}^{d})} \leq C_{\boldsymbol{b}},\;\left\|\partial^k_tc(\cdot,t)\right\|_{L^{\infty}(\Omega)}\leq C_c,\quad  \forall t\in J,
\end{align}
for some positive constants $C_{\mathcal{I}}$, $C_{\boldsymbol{A}}$, $C_{\boldsymbol{b}}$ and $C_c$. Then, the problem (\ref{pide}) is well-posed, and its solution $u$ satisfies the following regularity results
\begin{align*} 
&\begin{cases}
 &u\in C^{2}(J; H^2(\Omega)),\; \|u(t)\|_2 +t^{\alpha}\|\partial^{\alpha}_t u(t) \|_2 + t\|\partial_{t}u(t)\|_2 \leq C,\; t^l\|\partial_{t}^{l}u(t)\|_m\leq C t^{(2-m)\alpha/2},\\
&t^k\|\partial^k_t\left(\nabla\cdot\boldsymbol{A}(t)\nabla u(t)\right)\|_m\leq C t^{-m\alpha/2}\quad \forall t\in J,\;l=1,2,\;k=0,1\;\&\; m=0,1,
\end{cases}
\end{align*}
for some positive constant $C$ which remains bounded as $\alpha\to 1^{-}$.
\end{theorem}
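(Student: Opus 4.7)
The plan is to split the result into well-posedness and sharp pointwise-in-time regularity, both reduced to the constant-coefficient self-adjoint subdiffusion problem by a perturbation argument and closed by a fractional Gr\"onwall inequality. Freeze the coefficients at $t=0$ and set $\mathcal{L}_0 v := -\nabla\cdot(\boldsymbol{A}(\cdot,0)\nabla v)$; let $E(t)$ denote the solution operator and $\bar E(t)$ the Duhamel kernel for $\partial_t^\alpha w + \mathcal{L}_0 w = g$. The Mittag-Leffler representations give $\|E(t)v\|_{\dot H^2}\le C\|v\|_{\dot H^2}$, $\|\partial_t^k E(t)v\|_m\le C t^{(2-m)\alpha/2 -k}\|v\|_{\dot H^2}$ and analogous estimates for $\bar E(t)$, all cited in the references appearing in the introduction.

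Rewrite (\ref{pide}) as the Volterra integral equation
\[
u(t) = E(t)u_0 + \int_0^t \bar E(t-s)\bigl\{f(s) -(\mathcal{L}(s)-\mathcal{L}_0)u(s) - \boldsymbol{b}(s)\cdot\nabla u(s) - c(s)u(s) + \lambda \mathcal{I}u(s)\bigr\}\,ds.
\]
The Lipschitz regularity of $\boldsymbol{A}(\cdot,t)$ from (\ref{coefbounds}) gives $\|(\mathcal{L}(s)-\mathcal{L}_0)v\|\le C s\,\|v\|_{\dot H^2}$; together with the boundedness of $\boldsymbol{b}\cdot\nabla$, $c\cdot$, and $\mathcal{I}$ (the latter by (\ref{intbound})), and the weakly singular integrability of $\bar E(t-s)$, a contraction on a small interval $[0,t^\ast]$ produces a unique $u\in C([0,t^\ast];\dot H^2)$; a standard continuation argument extends this to $\bar J$, yielding well-posedness together with the $C(\bar J;H^2)$ bound on $u$.

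For the singular estimates, I would differentiate the Volterra equation once and twice in $t$, handling the noncommutation $\partial_t\partial_t^\alpha - \partial_t^\alpha\partial_t = k_{1-\alpha}(t)\,(\cdot)|_{t=0}$ explicitly, substitute the sharp bounds on $\partial_t^k E(t)$ and $\partial_t^k \bar E(t)$, and close through a weighted fractional Gr\"onwall inequality. This yields $t^\ell\|\partial_t^\ell u(t)\|_m \le C t^{(2-m)\alpha/2}$ for $\ell=1,2$ and $m=0,1$, and in particular $t\|\partial_t u\|_2$ and $t^\alpha\|\partial_t^\alpha u\|_2$ are bounded. The bound on $t^k\|\partial_t^k(\nabla\cdot\boldsymbol{A}\nabla u)\|_m$ then follows directly from the PDE identity $\nabla\cdot(\boldsymbol{A}\nabla u) = \partial_t^\alpha u + \boldsymbol{b}\cdot\nabla u + c u - \lambda\mathcal{I}u - f$, by differentiating $k$ times in $t$ and inserting the previously obtained bounds together with (\ref{coefbounds}) and (\ref{intbound}).

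The principal obstacle is the required $\alpha$-robustness. A naive fractional Gr\"onwall closure produces constants with Mittag-Leffler or $\Gamma(1-\alpha)^{-1}$-type blow-ups as $\alpha\to 1^-$, which is unacceptable here. One must therefore use an $\alpha$-robust fractional Gr\"onwall inequality of the type referenced in \cite{MR4246866, MR4287911}, and bookkeep the Mittag-Leffler prefactors appearing in $E(t)$ and $\bar E(t)$ to certify their boundedness as $\alpha\to 1^-$. A secondary difficulty is the non-self-adjointness and joint space-time dependence of $\mathcal{L}$, which prevents a clean spectral decomposition and forces the full lower-order and principal-part perturbation to be absorbed simultaneously through the Volterra/Gr\"onwall mechanism rather than by eigenfunction expansion.
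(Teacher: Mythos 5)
Your proposal attempts a full from-scratch proof via a frozen-coefficient Volterra equation, whereas the paper's proof is much lighter: it cites \cite{TOMAR2024137} for well-posedness and for all but two of the listed estimates, and then proves only the bounds on $t^\alpha\|\partial_t^\alpha u\|_2$ and $t^k\|\partial_t^k(\nabla\cdot\boldsymbol{A}\nabla u)\|_m$ directly from the already-established regularity. Your closing step (differentiate the PDE identity $\nabla\cdot(\boldsymbol{A}\nabla u)=\partial_t^\alpha u+\boldsymbol{b}\cdot\nabla u+cu-\lambda\mathcal{I}u-f$ and insert the known bounds) is exactly what the paper does for the last estimate. However, the earlier part of your plan has two concrete gaps.

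First, freezing the coefficients at $t=0$ breaks the Duhamel estimate for the principal-part perturbation. The relevant smoothing bound is $\|\bar E(\tau)g\|_{\dot H^2}\le C\tau^{-1}\|g\|$, and your Lipschitz bound gives $\|(\mathcal{L}(s)-\mathcal{L}_0)u(s)\|\le Cs\,\|u(s)\|_{\dot H^2}$, so the Duhamel term is controlled by $\int_0^t (t-s)^{-1}s\,\|u(s)\|_{\dot H^2}\,ds$, which diverges (the integrand behaves like $t/(t-s)$ as $s\to t$). The factor $s$ sits at the wrong endpoint. The standard repair, used in the works this theorem ultimately rests on, is to freeze the coefficient at the \emph{evaluation} time $t$ rather than at $0$, so that the perturbation carries the factor $(t-s)$ and cancels the $(t-s)^{-1}$ singularity; as written, your contraction in $C([0,t^\ast];\dot H^2)$ does not close.

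Second, the assertion that $t^\alpha\|\partial_t^\alpha u(t)\|_2\le C$ follows ``in particular'' from the bounds $t^\ell\|\partial_t^\ell u\|_m\le Ct^{(2-m)\alpha/2}$ is false by direct substitution: from $\|\partial_s u(s)\|_2\le Cs^{-1}$ one gets $\|\partial_t^\alpha u(t)\|_2\le C\int_0^t(t-s)^{-\alpha}s^{-1}\,ds$, which diverges. This estimate genuinely requires an extra ingredient; the paper uses the commutator identity
\begin{align*}
t\,\partial_t^\alpha u \;=\; \partial_t^\alpha(tu)\;-\;\alpha\int_0^t k_{1-\alpha}(t-s)\,u(s)\,ds\;-\;t\,k_{1-\alpha}(t)\,u(0),
\end{align*}
together with $\|\partial_s(su)\|_2\le C$, to obtain $t\|\partial_t^\alpha u\|_2\le Ct^{1-\alpha}$. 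A similar weighted/commutator treatment is also what makes the $k=1$ case of $t\|\partial_t(\partial_t^\alpha u)\|_m\le Ct^{-m\alpha/2}$ work in the paper's final step, which your sketch gestures at but does not carry out. Finally, your concern about $\alpha$-robustness is well placed, but identifying the need for an $\alpha$-robust Gr\"onwall inequality is not the same as verifying that the Mittag-Leffler and Gamma-function constants in your particular chain of estimates stay bounded as $\alpha\to 1^-$; that bookkeeping is precisely where such arguments usually fail and would need to be done explicitly.
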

\begin{proof}
    The well-posedness of the problem (\ref{pide}) is addressed in Section~2 of \cite{TOMAR2024137}. Moreover, except for the two estimates $\;t^{\alpha}\|\partial^{\alpha}_t u \|_2 \leq C\;$ and $\;t^k\|\partial^k_t\left(\nabla\cdot\boldsymbol{A}(t)\nabla u(t)\right)\|_m\leq C t^{-m\alpha/2}$, remaining estimates follows from \cite{TOMAR2024137} (see, Section~2, Theorem~2.1). To prove $\;t^{\alpha}\|\partial^{\alpha}_t u \|_2 \leq C$, we appeal to the identity $t\partial^{\alpha}_t u = \partial^{\alpha}_t (tu) - \alpha \int_0^t k_{1-\alpha}(t-s)u\;ds - tk_{1-\alpha}(t)u (0) $ as follows
    \begin{align}
        \nonumber  \|t\partial^{\alpha}_t u\|_2 &\leq \|\partial^{\alpha}_t (tu)\|_2 + \alpha \int_0^t k_{1-\alpha}(t-s)\|u\|_2\;ds + tk_{1-\alpha}(t)\|u (0) \|_2\\
        &\nonumber \leq \int_0^tk_{1-\alpha}(t-s)\|\partial_s(su)\|_2\;ds + C\frac{\alpha t^{1-\alpha}}{\Gamma{(2-\alpha)}} + C\frac{t^{1-\alpha}}{\Gamma{(1-\alpha)}}\\
        \nonumber &\leq Ct^{1-\alpha}.
    \end{align}
Now, we derive the final estimates $\;t^k\|\partial^k_t\left(\nabla\cdot\boldsymbol{A}(t)\nabla u(t)\right)\|_m\leq C t^{-m\alpha/2}$. Differentiate (\ref{pide}) with respect to $t$ to obtain
\begin{align}
    \label{finalregestofgradsigma_middle}& \partial^k_t (\nabla\cdot \boldsymbol{A}\nabla u) \;=\; \partial^k_t ( \partial^{\alpha}_t u  ) + \partial^k_t (  \boldsymbol{b}\cdot \nabla u )  + \partial^k_t ( c u  ) -\lambda \partial^k_t  \mathcal{I} u  - \partial^k_t f,\;k=0,1,
\end{align}
and then, after multiplying $t^k,\;k=0,1$, on both the sides of (\ref{finalregestofgradsigma_middle}), apply triangle inequality to derive
\begin{align}
    \label{finalregestofgradsigma}& t^k\|\partial^k_t (\nabla\cdot \boldsymbol{A}\nabla u)\|_m \;\leq\; t^k\|\partial^k_t ( \partial^{\alpha}_t u  )\|_m + t^k\|\partial^k_t (  \boldsymbol{b}\cdot \nabla u )\|_m  + t^k\|\partial^k_t ( c u  )\|_m + t^k\|\partial^k_t  \mathcal{I} u  \|_m + t^k\|\partial^k_t f\|_m,\;m=0,1.
\end{align}
The hypothesis $f \in  W^{2,1}(J;H^1(\Omega))$ with (\ref{intbound}) and (\ref{coefbounds}) yields 
\begin{align}
   \nonumber &t^k\|\partial^k_t (  \boldsymbol{b}\cdot \nabla u )\|_m  + t^k\|\partial^k_t ( c u  )\|_m + t^k\|\partial^k_t  \mathcal{I} u  \|_m  + t^k\|\partial^k_t f\|_m\\
\label{finalregestofgradsigma_aux2_middle} &\leq\; C_1( t^k\|\partial^k_t u \|_{m+1}  + t^k\|\partial^k_t  u  \|_m +  t^k\|\partial^k_t  u  \|_m ) + C_2,
\end{align}
for some positive constants $C_1$ and $C_2$ which remain bounded as $\alpha\to 1^{-}$. Thus, an appeal to the estimates $t^k\|\partial_{t}^{k}u(t)\|_m\leq C t^{(2-m)\alpha/2},\;k=0,1,\;m=0,1,$ and $t\|\partial_{t}u(t)\|_2\leq C$ in (\ref{finalregestofgradsigma_aux2_middle}) shows
\begin{align}
   \label{finalregestofgradsigma_aux2} &t^k\|\partial^k_t (  \boldsymbol{b}\cdot \nabla u )\|_m  + t^k\|\partial^k_t ( c u  )\|_m + t^k\|\partial^k_t  \mathcal{I} u  \|_m  + t^k\|\partial^k_t f\|_m\;\leq\; C.
\end{align}
Let us now estimate the first term on the right-hand side of (\ref{finalregestofgradsigma}). As $t^k\|\partial_{t}^{k}u(t)\|_m\leq C t^{(2-m)\alpha/2}$, we obtain 
\begin{align}
   \nonumber & \left\| \int_0^t k_{1-\alpha}(t-s)\partial_s u(s)\;ds\right\|_m\;\leq\; \int_0^t k_{1-\alpha}(t-s)\|\partial_s u(s)\|_m\;ds\\
   \label{finalregestofgradsigma_aux3} & \;\leq\; \frac{C}{\Gamma(1-\alpha)} \int_0^t (t-s)^{-\alpha}\;s^{\frac{(2-m)\alpha}{2}-1}\;ds\;=\;   C\Gamma{\left(\frac{(2-m)\alpha}{2}\right)}\;t^{-\frac{m}{2}\alpha} ,\;\text{ and}\\
   \nonumber & \left\| \partial_t\left(t\int_0^t k_{1-\alpha}(t-s)\partial_s u(s)\;ds\right)\right\|_m\;\leq\; \left\| \partial_t\int_0^t (t-s)k_{1-\alpha}(t-s)\partial_s u(s)\;ds\right\|_m + \left\| \partial_t\int_0^t k_{1-\alpha}(t-s)s \partial_s u(s)\;ds\right\|_m\\
   \nonumber&\hspace{1cm}\leq\; (2-\alpha)\int_0^t k_{1-\alpha}(t-s)\|\partial_s u(s)\|_m\;ds +\int_0^t k_{1-\alpha}(t-s)\|\partial_s u(s)\|_m\;ds+ \int_0^t k_{1-\alpha}(t-s)\|s\partial_s^2 u(s)\|_m\;ds\\
\label{finalregestofgradsigma_aux4}&\hspace{1cm}\leq\; \frac{C}{\Gamma(1-\alpha)}\int_0^t (t-s)^{-\alpha}\;s^{\frac{(2-m)\alpha}{2}-1}\;ds \;=\;  C\Gamma{\left(\frac{(2-m)\alpha}{2}\right)}\;t^{-\frac{m}{2}\alpha}.
\end{align}
By combining the estimates (\ref{finalregestofgradsigma_aux3}) and (\ref{finalregestofgradsigma_aux4}), we obtain the following estimate
\begin{align}
   \label{finalregestofgradsigma_aux5} & t^k\|\partial^k_t ( \partial^{\alpha}_t u  )\|_m \;\leq\; Ct^{-\frac{m\alpha}{2}}, \; k=0,1, \; m=0,1.
\end{align}
Finally, an application of the estimates (\ref{finalregestofgradsigma_aux2}), and (\ref{finalregestofgradsigma_aux5}) in (\ref{finalregestofgradsigma}) yields the desired result.
\end{proof}
\begin{theorem}\label{Regularity_condition1} Let $u_0 \in \dot{H}^{2+\epsilon}(\Omega)$, $0<\epsilon < \frac{1}{2}$. Then, under the assumptions of Theorem~\ref{Regularity_condition}, the following improved regularity results hold
\begin{align*} 
&\begin{cases}
 &t^{\alpha}\|\partial^{\alpha}_t u(t) \|_2 + t\|\partial_{t}u(t)\|_2  \leq C\;t^{\epsilon\alpha/2},\; t\|\partial_{t} u(t)\|_1\leq C t^{(\epsilon+1) \alpha/2},\\
&\|\nabla\cdot\boldsymbol{A}(t)\nabla u(t)\|_1\leq C t^{(\epsilon-1) \alpha/2}\quad \forall t\in J, 
\end{cases}
\end{align*}
for some positive constant $C$ which remains bounded as $\alpha\to 1^{-}$.
\end{theorem}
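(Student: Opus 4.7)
The plan is to follow the structure of the proof of Theorem~\ref{Regularity_condition} and sharpen each temporal bound by the extra factor $t^{\epsilon\alpha/2}$ coming from the additional $\epsilon$ units of spatial regularity of the initial datum $u_0 \in \dot{H}^{2+\epsilon}(\Omega)$. The essential new input is the semigroup-type estimate $\|\partial_t u(t)\|_{\dot{H}^s} \le Ct^{(\mu-s)\alpha/2 - 1}\|u_0\|_{\dot{H}^\mu}$ for admissible $\mu \ge s$, which is the standard sharp regularity bound for the homogeneous time-fractional diffusion equation (see, e.g., \cite{MR2832607,MR4023101,MR4054212}). This bound survives the presence of the lower-order terms $\boldsymbol{b}\cdot\nabla u + cu - \lambda\mathcal{I}u - f$ in (\ref{pide}) via a Duhamel/perturbation argument, using the base estimates of Theorem~\ref{Regularity_condition} to bootstrap the forcing-side norms. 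Specializing to $\mu = 2+\epsilon$ and $s = 1,2$ then yields
\[
t\|\partial_t u(t)\|_2 \le C t^{\epsilon\alpha/2}, \qquad t\|\partial_t u(t)\|_1 \le C t^{(1+\epsilon)\alpha/2}.
\]

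Once these improved derivative estimates are in hand, the Caputo-derivative bound $t^{\alpha}\|\partial_t^\alpha u(t)\|_2 \le Ct^{\epsilon\alpha/2}$ follows by re-running the identity
\[
t\,\partial_t^\alpha u \;=\; \partial_t^\alpha(tu) \;-\; \alpha\int_0^t k_{1-\alpha}(t-s)\,u\,ds \;-\; t\,k_{1-\alpha}(t)\,u_0
\]
used in the proof of Theorem~\ref{Regularity_condition}, but now inserting the sharper bound $\|\partial_s(su)\|_2 \le \|u\|_2 + s\|\partial_s u\|_2 \le C(1 + s^{\epsilon\alpha/2})$ inside $\|\partial_t^\alpha(tu)\|_2 \le \int_0^t k_{1-\alpha}(t-s)\|\partial_s(su)\|_2\,ds$. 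Evaluating the convolution through a Beta-function identity produces an $O(t^{1-\alpha+\epsilon\alpha/2})$ contribution, while the remaining two terms contribute $O(t^{1-\alpha})$; rearrangement gives the claim, with all gamma-function ratios remaining bounded as $\alpha \to 1^-$.

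For the final estimate $\|\nabla\cdot(\boldsymbol{A}(t)\nabla u(t))\|_1 \le Ct^{(\epsilon-1)\alpha/2}$, I would rewrite (\ref{pide}) as
\[
\nabla\cdot(\boldsymbol{A}\nabla u) \;=\; \partial_t^\alpha u + \boldsymbol{b}\cdot\nabla u + cu - \lambda\,\mathcal{I}u - f,
\]
take $H^1$-norms on both sides, and control each term using (\ref{intbound})-(\ref{coefbounds}) together with the improved bounds above. The dominant contribution is $\|\partial_t^\alpha u(t)\|_1 \le \int_0^t k_{1-\alpha}(t-s)\|\partial_s u\|_1\,ds \le C\int_0^t (t-s)^{-\alpha} s^{(1+\epsilon)\alpha/2 - 1}\,ds = Ct^{(\epsilon-1)\alpha/2}$, again by a Beta-function evaluation; the terms $\boldsymbol{b}\cdot\nabla u$, $cu$, $\lambda\mathcal{I}u$, $f$ are at worst $O(t^{\alpha/2})$ in $H^1$ by (\ref{intbound})-(\ref{coefbounds}) together with the Theorem~\ref{Regularity_condition} bound $\|u(t)\|_2 \le C$, which on the bounded interval $J$ is absorbed into the singular factor $t^{(\epsilon-1)\alpha/2}$.

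The principal obstacle is ensuring that every step is genuinely $\alpha$-robust as $\alpha \to 1^-$. The implicit constants in the semigroup-type bound $\|\partial_t u(t)\|_{\dot H^s} \le Ct^{(\mu-s)\alpha/2 - 1}\|u_0\|_{\dot H^\mu}$ arise from Mittag-Leffler / resolvent estimates and must remain uniform in $\alpha$, and the Duhamel iteration that imports the space-time-dependent lower-order operator $\boldsymbol{b}\cdot\nabla + c - \lambda\mathcal{I}$ must not introduce any $\alpha$-dependent blow-up. Once this uniformity is verified—which should follow by carefully tracking the constants in the arguments already employed for Theorem~\ref{Regularity_condition} in \cite{TOMAR2024137}—the remaining estimates are a slightly sharper re-run of those in the proof of Theorem~\ref{Regularity_condition}.
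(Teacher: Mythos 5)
Your outline for the first pair of estimates (semigroup-type bound at index $2+\epsilon$ plus a Duhamel argument for the lower-order terms) and for the last estimate (rewrite (\ref{pide}) for $\nabla\cdot(\boldsymbol{A}\nabla u)$, take $H^1$-norms, and use the convolution bound $\int_0^t(t-s)^{-\alpha}s^{(1+\epsilon)\alpha/2-1}\,ds\sim t^{(\epsilon-1)\alpha/2}$) is consistent with what the paper does; the paper simply imports Theorems~A.1--A.3 of \cite{TOMAR2024137} and interpolates between $\dot H^2$ and $\dot H^3$ rather than invoking a fractional-index estimate directly, but that is a cosmetic difference.

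There is, however, a genuine gap in your middle step, the bound $t^{\alpha}\|\partial_t^{\alpha}u(t)\|_2\le Ct^{\epsilon\alpha/2}$. You re-run the identity $t\,\partial_t^{\alpha}u=\partial_t^{\alpha}(tu)-\alpha\int_0^tk_{1-\alpha}(t-s)u\,ds-t\,k_{1-\alpha}(t)u_0$ and estimate the three terms by the triangle inequality, conceding that the last two are only $O(t^{1-\alpha})$. But $O(t^{1-\alpha})$ is not $O(t^{1-\alpha+\epsilon\alpha/2})$ as $t\to0^+$, so "rearrangement" cannot produce the claimed improvement: your argument only recovers $t^{\alpha}\|\partial_t^{\alpha}u\|_2\le C$, which is the unimproved Theorem~\ref{Regularity_condition} bound. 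The identity route can be salvaged, but only by exploiting the exact cancellation of the $u_0$-contributions, namely
\begin{align*}
t\,\partial_t^{\alpha}u \;=\;(1-\alpha)\int_0^t k_{1-\alpha}(t-s)\bigl(u(s)-u_0\bigr)\,ds+\int_0^t k_{1-\alpha}(t-s)\,s\,\partial_s u(s)\,ds,
\end{align*}
where both integrands are now $O(s^{\epsilon\alpha/2})$ in $\|\cdot\|_2$; you never invoke this cancellation. The paper avoids the issue entirely: since $u_0\in\dot H^{2+\epsilon}$ gives $\|\partial_s u(s)\|_2\le Cs^{\epsilon\alpha/2-1}$, which (unlike the $s^{-1}$ rate of Theorem~\ref{Regularity_condition}) is integrable at $s=0$, one bounds $\|\partial_t^{\alpha}u\|_2$ directly by $\frac{1}{\Gamma(1-\alpha)}\int_0^t(t-s)^{-\alpha}\|\partial_su\|_2\,ds\le C\frac{\Gamma(\epsilon\alpha/2)}{\Gamma(1-\alpha+\epsilon\alpha/2)}t^{\epsilon\alpha/2-\alpha}$, with an $\alpha$-robust constant. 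The identity was a device needed only in the $\dot H^2$ case; here it is both unnecessary and, used as you use it, insufficient.
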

\begin{proof}
    The Theorem~A.1, Theorem~A.2 and Theorem~A.3 in \cite{TOMAR2024137}, and the interpolation properties of $\dot{H}^s(\Omega),\;s=2,3,$ (see, Lemma~2.2. in \cite{MR0349038}) yields the following estimates
    \begin{align}\label{addreg1}
        &t\|\partial_{t}u(t)\|_2  \leq C\;t^{\epsilon\alpha/2},\; t\|\partial_{t} u(t)\|_1\leq C t^{(\epsilon+1) \alpha/2}\;t\in J,\; 0<\epsilon <1/2.
    \end{align}
Now, by applying above estimates (\ref{addreg1}), we obtain
\begin{align}
    \nonumber \|\partial_t^{\alpha}u(t)\|_m& \leq \frac{1}{\Gamma(1-\alpha)}\int_0^t (t-s)^{-\alpha}\|\partial_s u(s)\|_m\;ds, \; m=1,2,\\
    \nonumber &\leq C\frac{1}{\Gamma(1-\alpha)}\int_0^t (t-s)^{-\alpha}\;s^{(\epsilon +2-m)\alpha/2-1}\;ds, \; m=1,2,\\
    \label{addreg2} &\leq C\frac{\Gamma((\epsilon +2-m)\alpha/2)}{\Gamma(1-\alpha + (\epsilon +2-m)\alpha/2)}t^{(\epsilon +2-m)\alpha/2-\alpha}, \; m=1,2.
\end{align}
Finally, an appeal to the above estimate (\ref{addreg2}) with $m=1$ and Theorem~\ref{Regularity_condition} yields the estimate
\begin{align*}
    &\|\nabla\cdot\boldsymbol{A}(t)\nabla u(t)\|_1\leq C t^{(\epsilon-1) \alpha/2}\quad \forall t\in J.
\end{align*}
This completes the rest of the proof.
\end{proof}
\begin{remark}
  Under a suitable regularity assumption on the boundary $\partial \Omega$ of $\Omega$, we have $\dot{H}^{2+\epsilon}(\Omega) = H_0^1(\Omega)\cap H^{2+\epsilon}(\Omega)$, $0<\epsilon <\frac{1}{2}$ (See, equation (3.2) in \cite{MR2832607}). 
\end{remark}
%
\section{Non-uniform IMEX-L1-MFEM}\label{section3}
%
\textit{Spatial discretization:} Let $\mathcal{T}_h$ be a regular family of decomposition of $\Omega$ (see, \cite{Ciarlet1978}) into closed $d$-simplexes $\mathbb{T}$ of size $h = \max\{\text{diam}(\mathbb{T}); \mathbb{T} \in \mathcal{T}_h\}$. Further, let $V_h\times\boldsymbol{W}_h$ be a finite element subspace of $V\times\boldsymbol{W}$ having the following three properties:
\begin{enumerate}
    \item[(i)] $\nabla\cdot \boldsymbol{W}_h \subseteq V_h$.
    \item[(ii)] There exists a projection $\Pi_h : \boldsymbol{W} \rightarrow \boldsymbol{W}_h$, called Fortin projection satisfying $\nabla \cdot \Pi_h = P_h(\nabla \cdot)$, where $P_h :V \rightarrow V_h$ is the $L^2$-projection defined by $(P_h v -v,v_h)=0\; \forall v_h \in V_h, \; v \in V$, and hence 
    \begin{align*}
    (\nabla \cdot (\Pi_h \boldsymbol{w} - \boldsymbol{w}  ), v_h) =0 \; \forall v_h \in V_h
\;
\text{ and }\;
(P_h v-v, \nabla \cdot \boldsymbol{w}_h) =0 \; \forall \boldsymbol{w}_h \in \boldsymbol{W}_h.
\end{align*}
    \item[(iii)] Approximation properties:
    \begin{align}\label{approxprop}
        &   \|\boldsymbol{w}- \Pi_h \boldsymbol{w} \| \leq C h^r \| \nabla \cdot \boldsymbol{w}\|_{r-1}; \quad \|v -P_h v\| + h \|v -P_h v\|_1 \leq Ch^r\|v\|_r,\;1\leq r \leq 2.
    \end{align}
\end{enumerate}
Examples of such finite-dimensional sub-spaces $V_h\times\boldsymbol{W}_h$ of $V\times\boldsymbol{W}$ having the above properties can be found in \cite{MR483555} and \cite{MR1115205}.

\noindent\textit{Temporal Discretization:} Consider a partition $\{t_n\}_{n=0}^N$ of the interval $\bar{J}=[0, T]$ such that 
\begin{align}\label{gradedmesh1}
    & t_n = \left(\frac{n}{N}\right)^{\gamma}T,\quad \gamma\geq 1,\;0\leq n \leq N.
\end{align}
The well-known L1-formula \cite{MR3639581, MR3790081} for approximating the Caputo fractional derivative $\partial_{t}^{\alpha} \phi(t_n)$ is given by
\begin{align}
   \label{L1scheme}  &D_{t_n}^{\alpha} \phi(t_{n}) := {\sum_{j=1}^{n}}K^{n,j}_{1-\alpha} \left(\phi(t_j)-\phi(t_{j-1})\right) = {\sum_{j=1}^{n}} \int_{t_{j-1}}^{t_j}k_{1-\alpha}(t_n - s)ds\frac{\left(\phi(t_j)-\phi(t_{j-1})\right)}{\Delta t_j}\\
   \nonumber & \approx \; {\sum_{j=1}^{n}} \int_{t_{j-1}}^{t_j}k_{1-\alpha}(t_n - s)\partial_{s}\phi(s)ds \; = \; \int_{0}^{t_n}k_{1-\alpha}(t_n - s)\partial_{s}\phi(s)ds  \; = \; :\partial_{t}^{\alpha} \phi(t_n),
\end{align}
where the discrete kernels are defined as
\begin{align}
   \label{discretekernel}& K^{n,j}_{\alpha} = \frac{1}{\Delta t_j}\int_{t_{j-1}}^{t_j}k_{\alpha}(t_n - s)ds = \frac{k_{1+\alpha}(t_n - t_{j-1})-k_{1+\alpha}(t_n - t_{j})}{\Delta t_j}\;\text{ with }\;\Delta t_j = t_j -t_{j-1}.
   \end{align}
These discrete kernels satisfy the property:
   \begin{align}\label{Kproperty}
    0 \leq K_{1-\alpha}^{n,j-1}< K_{1-\alpha}^{n,j}, \quad 2 \leq j \leq n \leq N .  
   \end{align}
Now, the non-uniform IMEX-L1-MFEM for the problem (\ref{pide}) is to find a pair $(u_h^n, \boldsymbol{\sigma}^n_h) \in V_h \times \boldsymbol{W}_h,\;0\leq n \leq N$, such that $u_h^0=P_h u_{0}$ and 
\begin{align}
\label{fullydiscrete1} (\boldsymbol{B}^n\boldsymbol{\sigma}^n_h, \boldsymbol{w}_h) + ( u_h^n, \nabla \cdot  \boldsymbol{w}_h) &=0 \quad \forall \boldsymbol{w}_h \in \boldsymbol{W}_h,\;0\leq n\leq N,\\
    \label{fullydiscrete2}(D_{t_n}^{\alpha} u_h^n,v_h)-(\nabla \cdot \boldsymbol{\sigma}_h^n,v_h)+( \boldsymbol{b}^n \cdot \boldsymbol{B}^n \boldsymbol{\sigma}_h^n,v_h) + (c^n u_h^n,v_h) &=\lambda (\mathcal{I}Eu_h^n,v_h ) + (Ef^n,v_h )  \; \forall v_h \in V_h,\; 1\leq n \leq N,
\end{align}
where
   \begin{align}
    \label{2ndOrderExtrapolation} &E\phi^n = \begin{cases}
        \phi^{n-1} & \text{if}\; 1\leq n\leq n_\alpha:= \min\left(\lfloor{\frac{1}{\alpha}\rfloor},N\right) , \\
        (1+\mu_n)\phi^{n-1} - \mu_n \phi^{n-2} & \text{if}\; n \geq n_\alpha +1,
        \end{cases} 
\end{align}
$\mu_{n}=\Delta t_{n}/\Delta t_{n-1}, \; 2\leq n \leq N$, $\boldsymbol{B}^n(\boldsymbol{x}):=\boldsymbol{B}(\boldsymbol{x},t_n)$, $\boldsymbol{b}^n(\boldsymbol{x}):=\boldsymbol{b}(\boldsymbol{x},t_n)$, $c^n(\boldsymbol{x}):=c(\boldsymbol{x},t_n)$, and $f^n(\boldsymbol{x}):= f(\boldsymbol{x},t_n)$.

The following result will frequently be used in the stability and convergence analysis of the proposed method. For each $j,\; 0\leq j \leq N,$ $\vertiii{\cdot}_j:=\sqrt{(\boldsymbol{B}^j\cdot,\cdot)}$ is a norm on $L^2(\Omega)$ which is equivalent to $\|\cdot\|$-norm. In fact, from (\ref{supdc1}), there exist positive constants $\beta_0$ and $\gamma_0$ such that
\begin{align}
\label{equivalentnorms}&\beta_{0}\|\phi\|^{2}\leq \vertiii{\phi}_j^{2}\leq \gamma_{0}\|\phi\|^{2}\quad \forall \phi\in L^2(\Omega),\;\forall j:1\leq j\leq N.
\end{align}
\begin{remark} The choice $n_\alpha = \min\left(\lfloor{\frac{1}{\alpha}\rfloor}, N\right)$ in (\ref{2ndOrderExtrapolation}) is motivated by the discrete fractional Gr\"{o}nwall inequality (see Lemma~\ref{DFGI}). Furthermore, instead of applying the extrapolation (\ref{2ndOrderExtrapolation}), one may alternatively choose $Ef^n = f^n$ or $\mathcal{I}(Eu^n_h) = \mathcal{I}u_h^n$ without affecting the theoretical results presented in this article.
\end{remark} 
We now state the following result, which can be obtained by following a similar argument as in [Theorem~3.2, equation (3.13) and Section~3.3, equation (3.18), \cite{TOMAR2024137}]. However, for the reader's convenience, the proof is provided in Appendix~\ref{proofofLemma3.1}.
\begin{lemma}\label{dfdoi}
    Let $\phi_j\in L^2(\Omega),\;j=0,1,2,\ldots,N$. Then the discrete fractional differential operator $D_{t_n}^\alpha$ satisfies the following estimates-
\begin{align}
\nonumber &\frac{1}{2}D_{t_{n}}^{\alpha}\|\phi^{n}\|^{2} \leq \left(D_{t_{n}}^{\alpha}\phi^{n},\phi^{n}\right),\quad\text{and}\\
\nonumber &  \frac{1}{2}D_{t_{n}}^{\alpha}\vertiii{\phi^{n}}_n^{2}  \leq\left(D_{t_{n}}^{\alpha}\boldsymbol{B}^n\phi^{n},\phi^{n}\right) - \frac{1}{2} \sum_{j=0}^{n-1} K_{1-\alpha}^{n,j+1}(\vertiii{\phi^n}^2_{j+1}-\vertiii{\phi^n}^2_{j})\quad \forall n\in\{1,2,\ldots,N\}.
\end{align}
Further, if $\boldsymbol{B}$ is Lipschitz continuous with respect to the time variable and having the Lipschitz constant $L_B$, then
\begin{align}\label{pollutionterm}
&  \frac{1}{2}D_{t_{n}}^{\alpha}\vertiii{\phi^{n}}_n^{2}  \leq\left(D_{t_{n}}^{\alpha}\boldsymbol{B}^n\phi^{n},\phi^{n}\right) + \frac{L_B t_n^{1-\alpha}}{2\beta_0\Gamma(2-\alpha)}\vertiii{\phi^n}^2_{n}\quad \forall n\in\{1,2,\ldots,N\}.
\end{align}
\end{lemma}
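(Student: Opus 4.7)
The plan is to derive all three bounds from a single polarization identity in the $\boldsymbol{B}^j$-weighted inner product, assembled with the L1 weights $K_{1-\alpha}^{n,j}$. For the first (unweighted) estimate, I start from the elementary identity
\[
2(\phi^j-\phi^{j-1},\phi^n)=\bigl(\|\phi^j\|^2-\|\phi^{j-1}\|^2\bigr)+\bigl(\|\phi^n-\phi^{j-1}\|^2-\|\phi^n-\phi^j\|^2\bigr),
\]
multiply by $K_{1-\alpha}^{n,j}$, and sum over $j=1,\dots,n$. The first bracket telescopes into $D_{t_n}^\alpha\|\phi^n\|^2$, while Abel summation by parts on the second bracket, together with the monotonicity \eqref{Kproperty} and $\|\phi^n-\phi^n\|=0$, leaves only $K_{1-\alpha}^{n,1}\|\phi^n-\phi^0\|^2+\sum_{j=1}^{n-1}(K_{1-\alpha}^{n,j+1}-K_{1-\alpha}^{n,j})\|\phi^n-\phi^j\|^2\ge 0$, which is then dropped.

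For the second estimate I replace this by the $\boldsymbol{B}^j$-weighted polarization
\[
2(\boldsymbol{B}^j\phi^j,\phi^n)=\vertiii{\phi^j}_j^2+\vertiii{\phi^n}_j^2-\vertiii{\phi^n-\phi^j}_j^2,
\]
and its analogue at level $j{-}1$. Subtracting, multiplying by $K_{1-\alpha}^{n,j}$, and summing produces three groups of terms: the diagonal $\sum_j K_{1-\alpha}^{n,j}(\vertiii{\phi^j}_j^2-\vertiii{\phi^{j-1}}_{j-1}^2)=D_{t_n}^\alpha\vertiii{\phi^n}_n^2$; the pollution-like piece $\sum_j K_{1-\alpha}^{n,j}(\vertiii{\phi^n}_j^2-\vertiii{\phi^n}_{j-1}^2)$, which on reindexing $l=j-1$ is exactly the sum $\sum_{l=0}^{n-1}K_{1-\alpha}^{n,l+1}(\vertiii{\phi^n}_{l+1}^2-\vertiii{\phi^n}_l^2)$ advertised in the claim; and a residual $\sum_j K_{1-\alpha}^{n,j}(\vertiii{\phi^n-\phi^{j-1}}_{j-1}^2-\vertiii{\phi^n-\phi^j}_j^2)$ that is again non-negative by the same Abel-by-parts argument used in estimate (i). Dropping this non-negative residual and rearranging gives the second estimate.

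The third estimate then follows by bounding the pollution sum produced in (ii). Since $\vertiii{\phi^n}_{j+1}^2-\vertiii{\phi^n}_j^2=((\boldsymbol{B}^{j+1}-\boldsymbol{B}^j)\phi^n,\phi^n)$, the Lipschitz bound \eqref{Blips} combined with the norm equivalence \eqref{equivalentnorms} yields $|\vertiii{\phi^n}_{j+1}^2-\vertiii{\phi^n}_j^2|\le L_B\,\Delta t_{j+1}\,\beta_0^{-1}\,\vertiii{\phi^n}_n^2$. Substituting this and invoking the definition \eqref{discretekernel} to collapse $\sum_{j=0}^{n-1}K_{1-\alpha}^{n,j+1}\Delta t_{j+1}=\int_0^{t_n}k_{1-\alpha}(t_n-s)\,ds=t_n^{1-\alpha}/\Gamma(2-\alpha)$ delivers \eqref{pollutionterm}. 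The place that requires the most care is the bookkeeping around the $\boldsymbol{B}^j$-indices in the weighted polarization step: any mismatch between the subscript on $\vertiii{\cdot}$ and the time index of $\boldsymbol{B}^j$ destroys the clean separation into the $D_{t_n}^\alpha\vertiii{\phi^n}_n^2$ part, the advertised pollution sum, and the non-negative remainder, so aligning those indices correctly is the main technical hurdle; once the decomposition is set, both (ii) and (iii) are essentially forced.
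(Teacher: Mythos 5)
Your proof is correct and takes essentially the same route as the paper's (Appendix~\ref{proofofLemma3.1}): the polarization identities you invoke are just the exact-identity repackaging of the paper's Abel summation followed by Cauchy–Schwarz and Young's inequality (dropping your non-negative remainder $\sum_j K_{1-\alpha}^{n,j}\bigl(\vertiii{\phi^n-\phi^{j-1}}_{j-1}^2-\vertiii{\phi^n-\phi^j}_j^2\bigr)$ is term-by-term the same step as the paper's bound $(\boldsymbol{B}^j\phi^j,\phi^n)\le\tfrac12\vertiii{\phi^j}_j^2+\tfrac12\vertiii{\phi^n}_j^2$, and both rely on the kernel monotonicity (\ref{Kproperty}) in the same way). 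Your derivation of (\ref{pollutionterm}) via the Lipschitz bound, the norm equivalence (\ref{equivalentnorms}), and the collapse $\sum_{j=0}^{n-1}K_{1-\alpha}^{n,j+1}\Delta t_{j+1}=t_n^{1-\alpha}/\Gamma(2-\alpha)$ coincides exactly with the paper's (\ref{lipschitzineq})–(\ref{coefficientbound}).
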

%
\subsection{Stability Analysis}\label{section4}
%
To establish the stability of the proposed method, we first introduce certain properties of the discrete kernels $K_{1-\alpha}^{n,j}$ and their complementary discrete kernel, $P_{\alpha}^{n,i}$, as defined below: 
\begin{align}\label{complementarydiscretekernel}
&P_{\alpha}^{n,i} = \frac{1}{K_{1-\alpha}^{i,i}}\begin{cases}  
\displaystyle{\sum_{j=i+1}^n} P_{\alpha}^{n,j} \left(K_{1-\alpha}^{j,i+1} - K_{1-\alpha}^{j,i} \right) & \text{ : }  ~ 1 \leq i \leq n-1,\\
1 & \text{ : }  ~  i = n.
\end{cases}
\end{align}
These properties, presented in the following lemma, serve as key tools for deriving the subsequent stability and error estimates.
\begin{lemma}\label{discretekernelproperties}
The discrete kernels $K_{1-\alpha}^{n,j}$ and $P_{\alpha}^{n,j}$, $1\leq j\leq n,$ $1\leq n\leq N,$ satisfy the following results:
\begin{enumerate}
    \item[\namedlabel{itm:a}{(a)}] $0 \leq P_{\alpha}^{n,j} \leq \Gamma(2-\alpha) \Delta t_j^{\alpha}, ~ 1 \leq j \leq n. $
    \item[\namedlabel{itm:b}{(b)}] $\sum_{j=i}^n P_{\alpha}^{n,j} K^{j,i}_{1-\alpha} = 1, ~ 1 \leq i \leq n.$
    \item[\namedlabel{itm:c}{(c)}] $\sum_{j=1}^n P_{\alpha}^{n,j}k_{1+m\alpha-\alpha} (t_j) \leq k_{1+m\alpha}(t_n) \; \text{ for any non-negative integer } 0 \leq m \leq \lfloor{1/{\alpha}}\rfloor$.
    \item[\namedlabel{itm:e}{(d)}] $\nu \sum_{j=1}^{n-1}P_{\alpha}^{n,j}E_{\alpha}(\nu t_j^{\alpha}) \leq E_{\alpha}(\nu t_n^{\alpha})-1\quad \forall  \nu>0,\; \text{provided } \Delta t_{n-1}\leq \Delta t_{n}$, where $E_{\alpha}(z) := \sum^{\infty}
_{j=0}\frac{z^j}{\Gamma(j\alpha + 1)}$ is the Mittag-Leffler function.
    \item[\namedlabel{itm:f}{(e)}]$\sum_{j=1}^{n}P_{\alpha}^{n,j}\;t_j^{\beta-\alpha}\;\leq \; \frac{\Gamma{(1+\beta - \alpha)}}{\Gamma(1+\beta)}\;t_n^\beta\quad \forall \beta\in (0,1).$
    \item[\namedlabel{itm:g}{(f)}] $\sum_{j=1}^{n}P_{\alpha}^{n,j}\;t_j^{-\alpha}\;\leq \; 4e^\gamma\;\log_e(n+2), \;\text{ provided }\; t_n=\left(\frac{n}{N}\right)^\gamma T,\;\gamma\geq 1.$
    \item[\namedlabel{itm:h}{(g)}] $P^{n,n}_\alpha/ P^{n,n-1}_\alpha\;\leq\;\displaystyle\mu_n^\alpha/\alpha,\;n\geq 2.$ For a uniform mesh, i.e., when $\mu_{n} = 1,$ we have $$P^{n,n}_\alpha / P^{n,n-1}_\alpha\; =\;1/{(2-2^{1-\alpha})},\;n\geq 2.$$   
\end{enumerate} 
\end{lemma}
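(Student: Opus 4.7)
The plan is to prove the seven items in the listed order, since each later estimate leans on the earlier ones; the overall strategy follows the template established in recent works on non-uniform L1 schemes and in the companion article \cite{TOMAR2024137}. For (a), I would directly compute $K^{n,n}_{1-\alpha} = \Delta t_n^{-\alpha}/\Gamma(2-\alpha)$ from (\ref{discretekernel}), which gives the base case $P^{n,n}_\alpha = \Gamma(2-\alpha)\Delta t_n^{\alpha}$; positivity of $P^{n,i}_\alpha$ for $i<n$ follows from the recursion (\ref{complementarydiscretekernel}) by downward induction on $i$ using the monotonicity (\ref{Kproperty}) to guarantee non-negativity of each increment $K^{j,i+1}_{1-\alpha}-K^{j,i}_{1-\alpha}$, and the upper bound $P^{n,i}_\alpha\leq\Gamma(2-\alpha)\Delta t_i^\alpha$ is obtained by pairing the recursion with the previously established bound on $P^{n,j}_\alpha$ and with $K^{i,i}_{1-\alpha}\Delta t_i^\alpha=1/\Gamma(2-\alpha)$. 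For (b), substituting (\ref{complementarydiscretekernel}) into $S_{n,i}:=\sum_{j=i}^n P^{n,j}_\alpha K^{j,i}_{1-\alpha}$ and interchanging the order of summation yields the telescoping identity $S_{n,i}=S_{n,i+1}$, which iterates up to $S_{n,n}=P^{n,n}_\alpha K^{n,n}_{1-\alpha}=1$.

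Estimate (c) is the master inequality driving items (d)--(f). I would prove it by induction on $n$, combined with the convolution identity $\int_0^{t_n}k_{1-\alpha}(t_n-s)k_{m\alpha}(s)\,ds=k_{1+m\alpha}(t_n)$: the discrete left-hand side is a Riemann-type sum bounded above by this integral whenever $k_{1+m\alpha-\alpha}$ is monotone on the relevant intervals together with the monotonicity of $k_{1-\alpha}$, which is precisely the range $0\leq m \leq \lfloor 1/\alpha\rfloor$; beyond this range the required one-sided inequality flips. Matching the discrete sum against the continuous convolution while respecting the implicit recursion in $P^{n,j}_\alpha$ is where I expect the principal technical obstacle to lie.

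Items (d)--(g) then follow more mechanically. For (d), expand the Mittag-Leffler function as $E_\alpha(\nu t_j^\alpha)=\sum_{m=0}^{\infty}\nu^m t_j^{m\alpha}/\Gamma(1+m\alpha)$, apply (c) term-by-term to every partial sum with $m\leq\lfloor 1/\alpha\rfloor$ (the condition $\Delta t_{n-1}\leq\Delta t_n$ combined with (a) controls the tail), and re-assemble the resulting bound as $E_\alpha(\nu t_n^\alpha)-1$. For (e), the inequality is a discrete analogue of $\int_0^{t_n} k_\alpha(t_n-s)s^{\beta-\alpha}\,ds=\Gamma(1+\beta-\alpha)/\Gamma(1+\beta)\cdot t_n^\beta$; since $\beta-\alpha$ is not generally an integer multiple of $\alpha$, I would interpolate between the two adjacent integer values of $m$ in (c), or equivalently argue directly using the monotonicity of $s\mapsto s^{\beta-\alpha}$ on $(0,t_n)$ to produce the Gamma-ratio prefactor. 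For (f), specializing to $t_j=(j/N)^\gamma T$ gives $\Delta t_j\leq\gamma t_j/j$ via the mean-value theorem, which together with (a) yields $P^{n,j}_\alpha t_j^{-\alpha}\leq\Gamma(2-\alpha)\gamma^\alpha j^{-\alpha}$; a careful summation, partitioning the index set into an initial block handled directly and a tail handled by integral comparison with $\int j^{-\alpha}\,dj$, produces the factor $4e^\gamma \log_e(n+2)$. Finally, for (g), the recursion (\ref{complementarydiscretekernel}) at $i=n-1$ reduces the ratio $P^{n,n}_\alpha/P^{n,n-1}_\alpha$ to $K^{n-1,n-1}_{1-\alpha}/(K^{n,n}_{1-\alpha}-K^{n,n-1}_{1-\alpha})$; substituting the explicit formulas in terms of $\mu_n=\Delta t_n/\Delta t_{n-1}$ reduces the inequality to the scalar claim $\mu_n^\alpha(1+\mu_n)^{1-\alpha}\leq 1+\mu_n-\alpha$, which follows from the convexity of $\alpha\mapsto (\mu_n/(1+\mu_n))^\alpha$ and the matching of both sides at $\alpha=0,1$, with the uniform-mesh equality $1/(2-2^{1-\alpha})$ obtained by setting $\mu_n=1$.
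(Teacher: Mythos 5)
Your items (a), (b) and (g) are sound. For (g) in particular you take a genuinely different and arguably sharper route than the paper: the paper bounds $K^{n,n-1}_{1-\alpha}\leq \Delta t_n^{-\alpha}/\Gamma(1-\alpha)$ (so that $1-K^{n,n-1}_{1-\alpha}/K^{n,n}_{1-\alpha}\geq\alpha$), whereas you keep the exact expression and reduce the claim to $\mu^{\alpha}(1+\mu)^{1-\alpha}\leq 1+\mu-\alpha$, which does follow from convexity of $\alpha\mapsto(1+\mu)\left(\mu/(1+\mu)\right)^{\alpha}$ and equality at $\alpha=0,1$. Note also that the paper does not prove (a)--(e) at all; it cites Lemma~2.1 and Corollary~4.1 of \cite{MR3790081} for (a)--(d) and Lemma~5.3 of \cite{MR4246866} for (e). Your sketches of (c), (d) and (e) remain at the level of plausibility: you yourself flag the matching of the discrete sum against the continuous convolution in (c) as unresolved, the claim in (d) that ``(a) controls the tail'' for $m>\lfloor 1/\alpha\rfloor$ is not substantiated (the published argument uses the non-decreasing step condition in an essential way there), and the ``interpolation between adjacent integer $m$'' idea for (e) does not obviously produce the exact Gamma-ratio constant. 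These would need to be completed or replaced by citations.

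The genuine error is in (f). Your chain $P^{n,j}_{\alpha}t_j^{-\alpha}\leq\Gamma(2-\alpha)\Delta t_j^{\alpha}t_j^{-\alpha}\leq\Gamma(2-\alpha)\gamma^{\alpha}j^{-\alpha}$ is correct, but summing it gives $\sum_{j=1}^{n}j^{-\alpha}\sim n^{1-\alpha}/(1-\alpha)$ for $\alpha<1$, which is polynomially large in $n$ and cannot be absorbed into $4e^{\gamma}\log_e(n+2)$; no splitting into an initial block plus an integral-comparison tail rescues this, since already the tail $n/2\leq j\leq n$ contributes on the order of $n^{1-\alpha}$. The bound (a) is simply too lossy here: it ignores that the total mass $\sum_j P^{n,j}_{\alpha}$ is only $O(t_n^{\alpha})$ by (c). The paper's proof of (f) is the one part of the lemma it writes out in full, and it is where the logarithm actually comes from: set $\delta_n:=1/\log_e(n+2)$, write $t_j^{-\alpha}=t_j^{\delta_n-\alpha}t_j^{-\delta_n}\leq t_1^{-\delta_n}t_j^{\delta_n-\alpha}$, apply (e) with $\beta=\delta_n$, and then use $t_1^{-\delta_n}t_n^{\delta_n}=n^{\gamma\delta_n}\leq e^{\gamma}$ together with $\Gamma(1+\delta_n-\alpha)/\Gamma(1+\delta_n)=\Gamma(2+\delta_n-\alpha)/\bigl((1+\delta_n-\alpha)\Gamma(1+\delta_n)\bigr)\leq 4/\delta_n=4\log_e(n+2)$. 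So (f) is a corollary of (e), not of (a), and your proposal for it needs to be replaced.
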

\begin{proof}
    The proof of \ref{itm:a}, \ref{itm:b}, \ref{itm:c} and \ref{itm:e} are provided in Lemma~2.1 and Corollary~4.1 of \cite{MR3790081}. The result \ref{itm:f} can be obtained by following the approach outlined in the proof of Lemma~5.3 in \cite{MR4246866}. The estimate \ref{itm:g} follows from \ref{itm:f}; however, for the sake of completeness and the reader's convenience, the proof is provided below. Define $\delta_n := \frac{1}{\log_e (n+2)}$ and consider
    \begin{align}\label{proplogfact}
         \sum_{j=1}^{n}P_{\alpha}^{n,j}\;t_j^{-\alpha}\;&=\; \sum_{j=1}^{n}P_{\alpha}^{n,j}\;t_j^{\delta_n-\alpha}\;t_j^{-\delta_n}\;\leq \; t_1^{-\delta_n}\;\sum_{j=1}^{n}P_{\alpha}^{n,j}\;t_j^{\delta_n-\alpha}.
    \end{align}
    Thus, an application of the estimate \ref{itm:f} with $\beta=\delta_n$ in (\ref{proplogfact}) yields the estimate \ref{itm:g} as follows
    \begin{align*}
         \sum_{j=1}^{n}P_{\alpha}^{n,j}\;t_j^{-\alpha}\;&\leq\; \frac{\Gamma(1+\delta_n -\alpha)}{\Gamma(1+\delta_n)} \;t_1^{-\delta_n}\;t_n^{\delta_n}\;=\; \frac{\Gamma(2+\delta_n -\alpha)}{(1+\delta_n -\alpha)\Gamma(1+\delta_n)} \;n^{\gamma\delta_n}\;\leq\; 4e^\gamma\log_e(n+2) ,
    \end{align*}    
    where we have used $\Gamma(1+\delta_n)\geq 1/2$, $\Gamma(2+\delta_n -\alpha)\leq 2$, $1+\delta_n -\alpha\geq \delta_n$ and $n^{\gamma\delta_n} \leq e^\gamma$ to obtain the last term in the above estimate. To stablish result \ref{itm:h}, we utilize relation (\ref{discretekernel}) to derive
    \begin{align}
        & K^{n,n}_{1-\alpha} = \frac{1}{\Delta t_n^\alpha\;\Gamma(2-\alpha)},\quad K^{n,n-1}_{1-\alpha}\;\leq\; \frac{1}{\Delta t_n^\alpha\;\Gamma(1-\alpha)},\label{lemma3.2additional1}
    \end{align}
    and for a uniform mesh(i.e., $\mu_{n} = 1$),
    \begin{align}
        & K^{n,n-1}_{1-\alpha} = \frac{2^{1-\alpha} -1}{\Delta t_n^\alpha\;\Gamma(2-\alpha)}.\label{lemma3.2additional2}
    \end{align}
    Using the complementary relation (\ref{complementarydiscretekernel}), we further derive
     \begin{align}
        & P^{n,n-1}_\alpha = P^{n,n}_\alpha \;\left(1-\frac{K^{n,n-1}_{1-\alpha}}{K^{n,n}_{1-\alpha}}\right)/\mu_n^\alpha.\label{lemma3.2additional3}
    \end{align}
Based on the estimates (\ref{lemma3.2additional1}), (\ref{lemma3.2additional2}), and (\ref{lemma3.2additional3}), we arrive at \ref{itm:h}, thus completing the remainder of the proof. 
\end{proof}

Due to the presence of the term $\mu_n := \Delta t_n / \Delta t_{n-1}$ in the extrapolation operator $E$ (see (\ref{2ndOrderExtrapolation})), previously established discrete fractional Gr\"{o}nwall inequalities, as discussed in \cite{MR4402734, MR3790081, MR3904430, MR4199354, TOMAR2024137}, impose severe time-step restrictions when applied to higher-order IMEX numerical methods (order > 1) in the temporal direction for PIDEs (\ref{pide}). 
In order to relax sever time-step restriction to a milder one, we present below a refined discrete fractional 
Gr\"{o}nwall inequality, which allows us to prove the stability and convergence of the proposed IMEX-L1-MFEM on a graded mesh for general time-fractional PIDEs (\ref{pide}), under a less restrictive time-step condition.
\begin{lemma}\label{DFGI}{(Discrete fractional Gr\"{o}nwall inequality).} 
    Let $\{v^{n}\}_{n=0}^{N}$, $\{\xi^{n}\}_{n=1}^{N}$, $\{\eta^{n}\}_{n=1}^{N}$ and $\{\zeta^{n}\}_{n=1}^{N}$ be non-negative finite sequences such that 
\begin{align}
    \label{dfgi1} 
     D_{t_n}^{\alpha}(v^{n})^2  \;&\leq \; \sum_{i=0}^{n}\lambda_{n-i}^{n}(v^{i})^{2} + v^{n}\xi^{n} + (\eta^{n})^{2}  + (\zeta^{n})^{2},\quad 1\leq n \leq N,
\end{align}
where $\lambda_{j}^{n}\;\geq\; 0,\;0\leq j\leq n,$ and the discrete fractional differential operator $D_{t_{n}}^{\alpha},\; 1 \leq n\leq N,$ is given by (\ref{L1scheme}). If there exists a constant $\Lambda\;>\;0$ such that, $\sum_{j=0}^{n}\lambda_{j}^{n}\;\leq\;\Lambda,\;1\leq n \leq N,$ and if $\Delta t_{n-1}\leq\Delta t_{n},\;2\leq n \leq N,$ with the maximum time-step size 
\begin{align} \label{timecondition}
    \Delta t:=\max_{1\leq n \leq N} \Delta t_{n}\;\leq\; {\left(\delta\;\Gamma{(2-\alpha)}\max\limits_{1\leq n \leq N}\lambda_0^n\right)^{-1/\alpha}}, \text{ for some } \delta>1,
    \end{align} 
    then, for $1\leq n \leq N$,  
\begin{align}
    \label{dfgi2}  v^{n}&  \leq C_\delta E_{\alpha}\left(C_\delta\Lambda_{n} t_{n}^{\alpha}\right)\left( v^0 + \max_{1\leq j\leq n}\sum_{i=1}^{j} P^{j,i}_{\alpha}  \xi^i  + \left(2 t_n^{\alpha} \right)^{1/2}\max_{1 \leq j \leq n}\eta^j + \max_{1\leq j\leq n} \left(\sum_{i=1}^{j} P^{j,i}_{\alpha} (\zeta^i)^2\right)^{1/2} \right),
\end{align}
where $C_\delta:=\frac{\delta}{\delta-1} $, $E_{\alpha}(z) := \sum^{\infty}
_{j=0}\frac{z^j}{\Gamma(j\alpha + 1)}$ is the Mittag-Leffler function, and
\begin{align*}
     \Lambda_n:&=\begin{cases}
    \Lambda &:\;1\leq n \leq n_\alpha:=\min\left(\lfloor{\frac{1}{\alpha}\rfloor},N\right),\\
    \Lambda\left(1+\max\limits_{n_\alpha +1\leq j \leq n}\frac{P_\alpha^{j,j}}{P_\alpha^{j,j-1}}\right)&:\;n_\alpha + 1\leq n \leq N
\end{cases} \\
 \;&\leq\; \Lambda(1+ \max\limits_{n_\alpha +1\leq j \leq n} \mu_j^\alpha/\alpha),\quad \mu_n:=\frac{\Delta t_n}{\Delta t_{n-1}}.
\end{align*}
\end{lemma}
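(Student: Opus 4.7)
My plan is to combine a complementary-kernel summation that linearizes the hypothesis with a time-step-absorption argument and a Mittag-Leffler induction, leaning on the kernel properties in Lemma~\ref{discretekernelproperties}. First I would apply the weighted sum $\sum_{j=1}^n P_\alpha^{n,j}(\cdot)$ to (\ref{dfgi1}) at each time level $j$. Exchanging the order of summation and using the identity $\sum_{j=k}^n P_\alpha^{n,j}K_{1-\alpha}^{j,k}=1$ from Lemma~\ref{discretekernelproperties}\ref{itm:b} collapses the left-hand side into $(v^n)^2-(v^0)^2$. Setting $V_n:=\max_{0\leq k\leq n}v^k$ and using $\sum_{i=0}^j \lambda_{j-i}^j(v^i)^2\leq \Lambda V_j^2$ on the right yields
\begin{align*}
(v^n)^2 \;\leq\; (v^0)^2 + \Lambda\sum_{j=1}^n P_\alpha^{n,j}V_j^2 + V_n\sum_{j=1}^n P_\alpha^{n,j}\xi^j + \sum_{j=1}^n P_\alpha^{n,j}\bigl[(\eta^j)^2+(\zeta^j)^2\bigr].
\end{align*}

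Next, since $P_\alpha^{j,j}=1/K_{1-\alpha}^{j,j}=\Gamma(2-\alpha)\Delta t_j^\alpha$ and Lemma~\ref{discretekernelproperties}\ref{itm:a} gives $P_\alpha^{n,j}\leq \Gamma(2-\alpha)\Delta t_j^\alpha$, the time-step restriction (\ref{timecondition}) forces $\Gamma(2-\alpha)\Delta t^\alpha\max_k \lambda_0^k\leq 1/\delta$. This lets me absorb the $\lambda_0^j(v^j)^2$ diagonal contributions back into the left-hand side with a penalty factor $C_\delta=\delta/(\delta-1)$. When $j>n_\alpha$ an analogous treatment of the subdiagonal $\lambda_1^j(v^{j-1})^2$ contribution requires the ratio bound $P_\alpha^{j,j}/P_\alpha^{j,j-1}\leq \mu_j^\alpha/\alpha$ from Lemma~\ref{discretekernelproperties}\ref{itm:h}, which produces the correction factor $(1+\max_j\mu_j^\alpha/\alpha)$ in $\Lambda_n$. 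Taking the maximum over the leading index, controlling the linear-in-$V_n$ term through $V_n\xi^j \leq \tfrac12 V_n^2+\tfrac12(\xi^j)^2$ and absorbing the resulting $V_n^2/2$ on the LHS, and using Lemma~\ref{discretekernelproperties}\ref{itm:c} with $m=1$ so that $\sum_{j=1}^n P_\alpha^{n,j}(\eta^j)^2\leq 2t_n^\alpha\max_j(\eta^j)^2$, I obtain a standard Grönwall-type form
\begin{align*}
V_n^2 \;\leq\; \mathcal{F}_n^2 + C_\delta\Lambda_n \sum_{j=1}^n P_\alpha^{n,j}V_j^2,
\end{align*}
where $\mathcal{F}_n$ collects the data aggregate appearing on the right-hand side of (\ref{dfgi2}).

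With the ansatz $V_n^2 \leq C_\delta\mathcal{F}_n^2\,E_\alpha(C_\delta\Lambda_n t_n^\alpha)$, assumed to hold for $j<n$, I would split $\sum_{j=1}^n P_\alpha^{n,j} V_j^2 = P_\alpha^{n,n}V_n^2+\sum_{j=1}^{n-1}P_\alpha^{n,j}V_j^2$ and apply Lemma~\ref{discretekernelproperties}\ref{itm:e} with $\nu=C_\delta\Lambda_n$, which delivers $C_\delta\Lambda_n\sum_{j=1}^{n-1}P_\alpha^{n,j}E_\alpha(C_\delta\Lambda_n t_j^\alpha)\leq E_\alpha(C_\delta\Lambda_n t_n^\alpha)-1$. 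The residual $C_\delta\Lambda_n P_\alpha^{n,n}V_n^2$ is absorbed once more on the LHS using (\ref{timecondition}), closing the induction step. Since $\sqrt{C_\delta}\leq C_\delta$ and $\sqrt{E_\alpha(z)}\leq E_\alpha(z)$ for $z\geq 0$ (as $E_\alpha(z)\geq 1$), taking the square root of the resulting bound gives the claimed estimate.

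The main obstacle is the precise tracking of $\Lambda_n$ across the two regimes $n\leq n_\alpha$ and $n>n_\alpha=\min(\lfloor 1/\alpha\rfloor,N)$. For small $n$ the time-step condition alone absorbs the $\lambda_0^j$ diagonal contributions and one gets $\Lambda_n=\Lambda$; for large $n$ the subdiagonal $\lambda_1^j(v^{j-1})^2$ pieces entangle with the non-uniform step-size ratio $\mu_j$ and force the ratio estimate \ref{itm:h} to extract the correction factor $(1+\max_j\mu_j^\alpha/\alpha)$. A secondary subtlety is $\alpha$-robustness: the constants $C_\delta$, $\Gamma(2-\alpha)$, $\Gamma(1+\alpha)$, and $E_\alpha(C_\delta\Lambda_n t_n^\alpha)$ must all remain uniformly bounded as $\alpha\to 1^-$, which follows from the standard asymptotics of the gamma and Mittag-Leffler functions together with the uniform-in-$\alpha$ form of the time-step threshold in (\ref{timecondition}).
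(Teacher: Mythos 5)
Your overall skeleton matches the paper's: sum against the complementary kernels, use Lemma~\ref{discretekernelproperties}\ref{itm:b} to collapse the left side to $(v^n)^2-(v^0)^2$, bound the $\eta$-sum with \ref{itm:c}, absorb a diagonal term via (\ref{timecondition}), and close an induction with \ref{itm:e} and the ratio bound \ref{itm:h}. But two steps as you describe them do not go through. First, the absorption in your induction step is invalid: (\ref{timecondition}) only gives $P_\alpha^{n,n}\lambda_0^n=\Gamma(2-\alpha)\Delta t_n^\alpha\lambda_0^n\leq 1/\delta$, whereas you need to absorb $C_\delta\Lambda_n P_\alpha^{n,n}V_n^2$, whose coefficient involves the full $\Lambda_n$, not $\lambda_0^n$. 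These can differ arbitrarily — for instance $\lambda_0^n=0$ makes (\ref{timecondition}) vacuous while $\Lambda>0$ — so $C_\delta\Lambda_n P^{n,n}_\alpha$ need not be less than $1$. The problem is created by your own coarsening $\sum_i\lambda^j_{j-i}(v^i)^2\leq\Lambda V_j^2$ performed \emph{before} the absorption: it re-attaches the full constant $\Lambda$ to $(v^n)^2$. The paper instead peels off exactly the term $P^{n,n}_\alpha\lambda_0^n(v^n)^2$ (the only one in the double sum involving $(v^n)^2$) and absorbs it first; everything that remains involves $v^i$ with $i\leq n-1$ only and is handled by the induction hypothesis, with \ref{itm:h} used to convert the leftover $P^{n,n}_\alpha$-weighted contribution into a $P^{n,n-1}_\alpha$-weighted one so that \ref{itm:e} applies — that conversion, not a "subdiagonal $\lambda_1^j$" effect, is the source of the $\bigl(1+\max_j P^{j,j}_\alpha/P^{j,j-1}_\alpha\bigr)$ correction in $\Lambda_n$.

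Second, your treatment of $v^n\xi^n$ by Young's inequality does not deliver the stated bound. Applied per index, $V_n\xi^j\leq\frac12V_n^2+\frac12(\xi^j)^2$ produces $\frac12V_n^2\sum_jP_\alpha^{n,j}=O(t_n^\alpha)\,V_n^2$, which cannot be absorbed on the left when $t_n^\alpha$ is not small; applied to the aggregated product it costs a factor $2$ in front of the Grönwall sum (yielding $E_\alpha(2C_\delta\Lambda_n t_n^\alpha)$ rather than $E_\alpha(C_\delta\Lambda_n t_n^\alpha)$) and turns the $\xi$-contribution into $\bigl(\sum_iP^{j,i}_\alpha\xi^i\bigr)^2$, which after the final square root does not reproduce the unsquared sum $\sum_iP^{j,i}_\alpha\xi^i$ in (\ref{dfgi2}) with the claimed constant. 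The paper's key device — missing from your plan — is the case analysis: either $v^n$ is smaller than one of the data terms (and the bound is trivial), or all data terms are $\leq v^n$, so $(v^i)^2\leq v^iv^n$, the whole right side factors as $(\cdots)v^n$, and one divides by $v^n$. This keeps the argument linear in $v$, costs nothing on the $\xi$ term, and is also what makes the diagonal absorption produce exactly $C_\delta=\delta/(\delta-1)$. Relatedly, you do not explain the role of the threshold $n_\alpha$: for $n\leq n_\alpha$ the paper proves the sharper partial-sum bound $v^n\leq C_\delta\sum_{m=0}^n(C_\delta\Lambda)^mk_{1+m\alpha}(t_n)\,\Phi_n$ using \ref{itm:c} (valid only for $m\leq\lfloor1/\alpha\rfloor$) with $\Lambda_n=\Lambda$, and only switches to \ref{itm:e} plus the ratio correction for $n>n_\alpha$.
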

\begin{proof}
An exchange of order of summation and Lemma~\ref{discretekernelproperties}\ref{itm:b} yields 
\begin{align}
\label{dfgieqn2}
\nonumber\displaystyle{\sum_{j=1}^{n}P^{n,j}_{\alpha} D_{t_j}^\alpha} \left(v^j\right)^2 &=\displaystyle{\sum_{j=1}^{n}P^{n,j}_{\alpha}\sum_{k=1}^{j}}K^{j,k}_{1-\alpha} \left((v^k)^2-(v^{k-1})^2\right) = \sum_{k=1}^{n}\left(\sum_{j=k}^n P^{n,j}_{\alpha}K^{j,k}_{1-\alpha} \right)\left((v^k)^2-(v^{k-1})^2\right) \\
&= (v^n)^2 - (v^0)^2 , \quad 1\leq n\leq N,
\end{align}
and then, using (\ref{dfgieqn2}) and (\ref{dfgi1}), we obtain
\begin{align}
\nonumber (v^n)^2 &\leq (v^0)^2 + \sum_{j=1}^{n} P^{n,j}_{\alpha} v^j\xi^j +  \sum_{j=1}^{n} P^{n,j}_{\alpha} \sum_{i=0}^j \lambda^j_{j-i}(v^i)^2 + \sum_{j=1}^{n} P^{n,j}_{\alpha} (\eta^j)^2+ \sum_{j=1}^{n} P^{n,j}_{\alpha} (\zeta^j)^2,\\
\label{dfgieqn1} &\leq  (v^0)^2  + \sum_{j=1}^{n} P^{n,j}_{\alpha} v^j \xi^j + \sum_{j=1}^{n} P^{n,j}_{\alpha} \sum_{i=0}^j \lambda^j_{j-i}(v^i)^2 + 2t_n^\alpha\max_{1\leq j \leq n} (\eta^j)^2+ \sum_{j=1}^{n} P^{n,j}_{\alpha} (\zeta^j)^2, \quad 1\leq n \leq N,
\end{align}
where the second-to-last term in (\ref{dfgieqn1}) has been estimated using Lemma~\ref{discretekernelproperties}\ref{itm:c} with $m=1$. 

Now, define a non-decreasing finite sequence $\{\Phi_n \}_{n=1}^{N}$ as follows:
\begin{align*}
\Phi_n :=  v^0 + \max_{1 \leq j \leq n}\sum_{i=1}^j P^{j,i}_{\alpha}\xi^i + \left(2t_n^{\alpha}\right)^{1/2}\max_{1 \leq j \leq n} \eta^j + \max_{1 \leq j \leq n} \left(\sum_{i=1}^{j} P^{j,i}_{\alpha} (\zeta^i)^2\right)^{1/2}, \quad 1\leq n \leq N.
\end{align*}
 Then the proof of (\ref{dfgi2}) boils down to establishing that 
\begin{align}\label{dfgieqn5}
    v^n \leq C_\delta  E_{\alpha}\left(C_\delta \Lambda_n t_n^{\alpha}\right)\;\Phi_n \quad \forall ~ 1 \leq n \leq N
\end{align}
by using mathematical induction in the following two steps:

\textbf{Step-I ($1\leq n \leq n_\alpha$):} As $\sum_{m=0}^{n}(C_\delta\Lambda)^m k_{1+m\alpha}(t_n)\leq \sum_{m=0}^{\infty}(C_\delta\Lambda)^m k_{1+m\alpha}(t_n)=:E_\alpha(C_\delta\Lambda t_n^\alpha),\;1\leq n\leq N$, it is enough to prove that
\begin{align}\label{dfgieqncaseI1}
    v^n \leq C_\delta \left(\sum_{m=0}^{n}(C_\delta\Lambda)^m k_{1+m\alpha}(t_n)\right)\;\Phi_n \quad \forall ~ 1 \leq n\leq  n_\alpha.
\end{align}
For $n=1$, if $v^1 < v^0$ or $v^1 < \left(2t_1^\alpha\right)^{1/2}\;\eta^1$ or $v^1 < \left(P_{\alpha}^{1,1}(\zeta^1)^2\right)^{1/2}$, then the result (\ref{dfgieqncaseI1}) holds trivially. Otherwise, $v^0 \leq v^1$, $\left(2t_1^\alpha\right)^{1/2}\;\eta^1 \leq v^1$, $\left(P_{\alpha}^{1,1}(\zeta^1)^2\right)^{1/2} \leq v^1$, and hence, the inequality (\ref{dfgieqn1}) implies 
\begin{align}
\nonumber (v^1)^2
&\leq \left(v^0 + P^{1,1}_{\alpha}\lambda_{1}^{1} v^0 + P^{1,1}_{\alpha}\lambda_{0}^{1} v^1 +  P^{1,1}_{\alpha} \xi^1  + \left(2 t_1^{\alpha}\right)^{1/2}\;\eta^1 + \left(P_{\alpha}^{1,1}(\zeta^1)^2\right)^{1/2} \right)v^1\\
\label{FDGI_Step-I1}&\leq (1+P^{1,1}_{\alpha}\lambda_{1}^{1})\Phi_1 v^1 +  P^{1,1}_{\alpha}\lambda_{0}^{1} (v^1)^2.
\end{align}
From condition (\ref{timecondition}) and Lemma~\ref{discretekernelproperties}\ref{itm:c} with $j=1$, we have $P^{1,1}_{\alpha}\lambda_{0}^{1} \leq 1/\delta,\;\delta>1,$ and $1+ P^{1,1}_{\alpha}\lambda_{1}^{1} \leq \sum_{m=0}^{1}(C_\delta\Lambda)^mk_{1+m\alpha}(t_1)$. Thus, we obtain (\ref{dfgieqncaseI1}) for $n=1$ from (\ref{FDGI_Step-I1}).

 Now, assume that (\ref{dfgieqncaseI1}) holds for $1 \leq k \leq m-1$, where $2\leq m \leq n_\alpha$. Then, there exists an integer $m_0$, $1 \leq m_0 \leq m-1$, such that $v^{m_0}= \max_{1 \leq j \leq m-1} v^j$. If $v^m \leq v^{m_0}$, the induction hypothesis, along with the properties $\Phi_n \leq \Phi_{n+1}$ and $k_{1+m\alpha}(s) \leq k_{1+m\alpha}(t)$ for $s \leq t$, yields the result (\ref{dfgieqncaseI1}) for $n = m$. Let $v^m > v^{m_0}.$ If $v^m < v^0$, or $v^m < \sqrt{2 t_m^{\alpha}}\max_{1 \leq j \leq m}\eta^j$, or $v^m < \max_{1 \leq j \leq m}\sqrt{\sum_{i=1}^jP_{\alpha}^{j,i}(\zeta^i)^2}$, then the inequality (\ref{dfgieqn5}) trivially holds for $n=m$. Otherwise, since $v^0 \leq v^m$, $\sqrt{2 t_m^{\alpha}}\max_{1 \leq j \leq m}\eta^j \leq v^m$, and $\max_{1 \leq j \leq m}\sqrt{\sum_{i=1}^jP_{\alpha}^{j,i}(\zeta^i)^2} \leq v^m$, applying (\ref{dfgieqn1}) and condition (\ref{timecondition}) shows
\begin{align*}
(v^m)^2 & \leq \left(\Phi_m+ \sum_{j=1}^{m-1} P^{m,j}_{\alpha} \sum_{i=0}^j \lambda^j_{j-i}v^i  + P^{m,m}_{\alpha} \sum_{i=0}^{m-1} \lambda^m_{m-i}v^i + P^{m,m}_{\alpha}\lambda^m_{0}v^m  \right)v^m\\
 & \leq \left(\Phi_m+ \sum_{j=1}^{m-1} P^{m,j}_{\alpha} \sum_{i=0}^j \lambda^j_{j-i}v^i  + P^{m,m}_{\alpha} \sum_{i=0}^{m-1} \lambda^m_{m-i}v^i  \right)v^m + \frac{1}{\delta}(v^m)^2.  
\end{align*}
Thus, we obtain
\begin{align*}
v^m & \leq C_\delta\left(\Phi_m+ \sum_{j=1}^{m-1} P^{m,j}_{\alpha} \sum_{i=0}^j \lambda^j_{j-i}v^i  + P^{m,m}_{\alpha} \sum_{i=0}^{m-1} \lambda^m_{m-i}v^i  \right), 
\end{align*}
and by applying the induction hypothesis, along with the properties $\Phi_n \leq \Phi_{n+1}$ and $k_{1+m\alpha}(s) \leq k_{1+m\alpha}(t)$ for $s \leq t$, we arrive at
\begin{align}
\nonumber v^m & \leq C_\delta\left(\Phi_m+ C_\delta\Lambda \sum_{j=1}^{m-1} P^{m,j}_{\alpha} \sum_{i=0}^j (C_\delta\Lambda)^i k_{1+i\alpha}(t_j)\Phi_j +C_\delta\Lambda P^{m,m}_{\alpha} \sum_{i=0}^{m-1} (C_\delta\Lambda)^i k_{1+i\alpha}(t_{m-1})\Phi_{m-1}  \right)\\
\label{FDGI_Step-I3}&\leq C_\delta\left(1+ C_\delta\Lambda \sum_{j=1}^{m} P^{m,j}_{\alpha} \sum_{i=0}^{m-1} (C_\delta\Lambda)^i k_{1+i\alpha}(t_j) \right)\Phi_m.
\end{align}
From Lemma~\ref{discretekernelproperties}\ref{itm:c}, it follows that $C_\delta\Lambda \sum_{j=1}^{m} P^{m,j}_{\alpha} \sum_{i=0}^{m-1} (C_\delta\Lambda)^i k_{1+i\alpha}(t_j) \leq  \sum_{i=1}^{m} (C_\delta\Lambda)^i k_{1+i\alpha}(t_m)$ for $1\leq m\leq n_\alpha$. Applying this estimate in (\ref{FDGI_Step-I3}), we conclude that (\ref{dfgieqncaseI1}) holds for $n=m$. By mathematical induction, (\ref{dfgieqncaseI1}) and, as a consequence, (\ref{dfgieqn5}) hold for all $n$ such that $1 \leq n \leq n_\alpha$.

\textbf{Step-II ($n_\alpha +1\leq n \leq N$):} For $n=n_\alpha +1$, if $v^{n_\alpha +1}<\max_{0\leq j \leq n_\alpha} v^j$, or $v^{n_\alpha +1} < \sqrt{2 t_{n_\alpha +1}^{\alpha}}\max_{1 \leq j \leq n_\alpha +1}\eta^j$, or $v^{n_\alpha +1} < \max_{1 \leq j \leq n_\alpha +1}\sqrt{\sum_{i=1}^j P_{\alpha}^{j,i}(\zeta^i)^2}$, then by applying the non-decreasing property of $E_\alpha(\cdot)$ and $\Phi_n$, the result (\ref{dfgieqn5}) follows for $n=n_\alpha +1$. Otherwise, $\max_{0\leq j \leq n_\alpha} v^j \leq v^{n_\alpha +1}$,  $ \sqrt{2 t_{n_\alpha +1}^{\alpha}}\max_{1 \leq j \leq n_\alpha +1}\eta^j \leq v^{n_\alpha +1} $, and $ \max_{1 \leq j \leq n_\alpha +1}\sqrt{\sum_{i=1}^j P_{\alpha}^{j,i}(\zeta^i)^2}\leq v^{n_\alpha +1}$, and therefore, by applying (\ref{dfgieqn1}) and condition (\ref{timecondition}), we obtain
\begin{align}
\nonumber (v^{n_\alpha +1})^2 & \leq \left(\Phi_{n_\alpha +1}+ \sum_{j=1}^{n_\alpha } P^{n_\alpha +1,j}_{\alpha} \sum_{i=0}^j \lambda^j_{j-i}v^i  + P^{n_\alpha +1,n_\alpha +1}_{\alpha} \sum_{i=0}^{n_\alpha } \lambda^{n_\alpha +1}_{n_\alpha +1-i}v^i + P^{n_\alpha +1,n_\alpha +1}_{\alpha}\lambda^{n_\alpha +1}_{0}v^{n_\alpha +1}  \right)v^{n_\alpha +1}\\
\label{FDGI_Step-I4} & \leq \left(\Phi_{n_\alpha +1}+ \sum_{j=1}^{n_\alpha } P^{n_\alpha +1,j}_{\alpha} \sum_{i=0}^j \lambda^j_{j-i}v^i  + P^{n_\alpha +1,n_\alpha +1}_{\alpha} \sum_{i=0}^{n_\alpha } \lambda^{n_\alpha +1}_{n_\alpha +1-i}v^i  \right)v^{n_\alpha +1} + \frac{1}{\delta}(v^{n_\alpha +1})^2.  
\end{align}
Apply (\ref{dfgieqn5}) to (\ref{FDGI_Step-I4}) for $1 \leq n \leq n_\alpha$ to find that
\begin{align}
\nonumber v^{n_\alpha +1} &  \leq C_\delta\left(\Phi_{n_\alpha +1}+ C_\delta\Lambda \sum_{j=1}^{n_\alpha } P^{n_\alpha +1,j}_{\alpha} E_\alpha\left(C_\delta\Lambda t_j^\alpha\right)\Phi_j  +C_\delta\Lambda P^{n_\alpha +1,n_\alpha +1}_{\alpha} E_\alpha\left(C_\delta\Lambda t_{n_\alpha}^\alpha\right)\Phi_{n_\alpha}  \right)\\
\label{FDGI_Step-I5}& \leq C_\delta\left(1+ C_\delta\Lambda_{n_\alpha +1} \sum_{j=1}^{n_\alpha } P^{n_\alpha +1,j}_{\alpha} E_\alpha\left(C_\delta\Lambda_{n_\alpha +1} t_j^\alpha\right)   \right)\Phi_{n_\alpha +1}.
\end{align}
Thus, an appeal to Lemma~\ref{discretekernelproperties}\ref{itm:e} in (\ref{FDGI_Step-I5}) yields the result (\ref{dfgieqn5}) for $n=n_\alpha +1$.

 Now, assume that (\ref{dfgieqn5}) holds for $1 \leq k \leq m-1$, where $n_{\alpha} + 2\leq m \leq N$. Then, there exists an integer $m_0$, $0 \leq m_0 \leq m-1$, such that $v^{m_0}= \max_{0 \leq j \leq m-1} v^j$. If $v^{m}<v^{m_0}$, or $v^{m} < \sqrt{2 t_{m}^{\alpha}}\max_{1 \leq j \leq m}\eta^j$, or $v^{m} < \max_{1 \leq j \leq m}\sqrt{\sum_{i=1}^j P_{\alpha}^{j,i}(\zeta^i)^2}$, then by applying the non-decreasing property of $E_\alpha(\cdot)$ and $\Phi_n$, the result (\ref{dfgieqn5}) follows for $n=m$. Otherwise, $v^{m_0}\leq v^{m}$, $ \sqrt{2 t_{m}^{\alpha}}\max_{1 \leq j \leq m}\eta^j\leq v^{m}$, and $\max_{1 \leq j \leq m}\sqrt{\sum_{i=1}^j P_{\alpha}^{j,i}(\zeta^i)^2}\leq v^{m}$, and hence, by applying (\ref{dfgieqn1}), condition (\ref{timecondition}), and the induction hypothesis, one can obtain 
 \begin{align}
\label{FDGI_Step-I6} v^{m} & \leq C_\delta\left(1+ C_\delta\Lambda_{m} \sum_{j=1}^{m-1 } P^{m,j}_{\alpha} E_\alpha\left(C_\delta\Lambda_{m} t_j^\alpha\right)   \right)\Phi_{m}.
\end{align}
An appeal to Lemma~\ref{discretekernelproperties}\ref{itm:e} in (\ref{FDGI_Step-I6}) yields that (\ref{dfgieqn5}) holds for $n=m$. Thus, the principle of mathematical induction confirms that the result (\ref{dfgieqn5}) holds for all $n$, $1 \leq n \leq N$. Finally, using Lemma~\ref{discretekernelproperties}\ref{itm:h}, we can estimate $\Lambda_n$ in terms of $\mu_n=\Delta t_n/\Delta t_{n-1}$ as follows:
\begin{align*}
    & \Lambda_n \leq \Lambda(1+ \max\limits_{n_\alpha +1\leq j \leq n} \mu_j^\alpha/\alpha).
\end{align*}
This concludes the rest of the proof.
\end{proof}
\begin{remark}
Observe that while Lemma~\ref{DFGI} relaxes the time-step restriction to some extent, it results in a larger constant in the Mittag-Leffler function in (\ref{dfgi2}) compared to earlier discrete fractional Gr\"{o}nwall inequalities, as noted in \cite{MR4402734, MR3790081, MR3904430, MR4199354, TOMAR2024137}.
\end{remark}
\begin{remark}
    For the graded mesh $\left\{t_n\right\}_{n=1}^{N}$ defined in (\ref{gradedmesh1}), the limit $\lim\limits_{N\to\infty}\;\max\limits_{n_\alpha +1\leq j \leq N}\frac{P_\alpha^{j,j}}{P_\alpha^{j,j-1}}$ exists, and satisfies $\lim\limits_{N\to\infty}\;\max\limits_{n_\alpha +1\leq j \leq N}\frac{P_\alpha^{j,j}}{P_\alpha^{j,j-1}}\;\leq\;\mu_{n_\alpha+1}^\alpha/\alpha$. This limit depends solely on $\Lambda$, $\alpha$, and the grading parameter $\gamma$.
\end{remark} 
\begin{remark}
    For a uniform partition, i.e., when $\Delta t_{n-1} = \Delta t_n,\;n\geq 2,$ we have $\Lambda_n\leq \Lambda + \Lambda/(2-2^{1-\alpha}) $ which is comparable to the bound found in \cite{MR3870961, MR3914223, MR3827604}. 
\end{remark}
%
As the problem (\ref{fullydiscrete1}--\ref{fullydiscrete2}) is linear, to ensure stability, it is enough to derive \textit{a priori} estimates for $u_{h}^{n}$ and $\boldsymbol{\sigma}_h^n $ for $1\leq n \leq N,$ in terms of $u_{h}^{0}$ and $f^{n}$.
\begin{theorem}\label{stabilitytheorem1} If for some $\delta>1$, $\max\limits_{1\leq n \leq N}\Delta t_n\leq \left( \delta\;\lambda^S\Gamma(2-\alpha)\right)^{-1/\alpha}$, then the solution $u_h^n$ of the problem (\ref{fullydiscrete1}--\ref{fullydiscrete2}) satisfies
\begin{align}
\label{L2stabilityforu}&\|u_h^n\| \leq C_S E_{\alpha}(C_\delta \Lambda_{\infty}^S t_n^{\alpha})\left( \|u_h^0\| + \max_{1\leq j \leq n} \sum_{i=1}^jP_{\alpha}^{j,i}\|Ef^i\|\right), \quad 1 \leq n \leq N,  
\end{align}  
where 
\begin{align*}
    &C_S:=2C_\delta,\;C_\delta := \delta/(\delta-1),\;\lambda^S:=  \frac{1}{2}\|\widetilde{b}\|_{L^\infty(\Omega\times J)}  + 2\|\widetilde{c}\|_{L^\infty(\Omega\times J)} 
 + \epsilon|\lambda |,\;\epsilon>0,\\
 & \widetilde{c}(x,t):= \max(0,-c(x,t)),\; \widetilde{b}:=\boldsymbol{b}^T\boldsymbol{A}^{-1}\boldsymbol{b},\\
& \Lambda_{\infty}^S:=\lim_{n\to\infty}\Lambda_{n}^S,\quad \Lambda_{n}^S:=\begin{cases}
    \Lambda^S &:\;1\leq n \leq n_\alpha:=\min\left(\lfloor{\frac{1}{\alpha}\rfloor},N\right),\\
    \Lambda^S\left(1+\max\limits_{n_\alpha +1\leq j \leq n}\frac{P_\alpha^{j,j}}{P_\alpha^{j,j-1}}\right)&:\;n_\alpha + 1\leq n \leq N,
\end{cases} \quad\text{ and}\\
 & \Lambda^S :=\left(  \frac{1}{2}\|\widetilde{b}\|_{L^\infty(\Omega\times J)}  + 2\|\widetilde{c}\|_{L^\infty(\Omega\times J)} 
 + \epsilon|\lambda |\right) +  2\epsilon^{-1}|\lambda |C_{\mathcal{I}}^2\left((1+\mu_{n_\alpha+1})^2+\mu_{n_\alpha+1}^2\right).
\end{align*}
\end{theorem}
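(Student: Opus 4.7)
\textbf{Plan of proof for Theorem~\ref{stabilitytheorem1}.} The idea is to test the two discrete equations against the natural choices so that the troublesome cross term $(u_h^n,\nabla\cdot\boldsymbol{\sigma}_h^n)$ is eliminated, apply the energy-type inequality of Lemma~\ref{dfdoi} to the discrete Caputo derivative, and finally close the estimate through the refined discrete fractional Gr\"onwall inequality of Lemma~\ref{DFGI}.

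First I would set $\boldsymbol{w}_h=\boldsymbol{\sigma}_h^n$ in \eqref{fullydiscrete1} to obtain $(u_h^n,\nabla\cdot\boldsymbol{\sigma}_h^n)=-\vertiii{\boldsymbol{\sigma}_h^n}_n^{2}$, and $v_h=u_h^n$ in \eqref{fullydiscrete2}. Adding the two identities and multiplying by $2$ yields
\begin{align*}
2(D_{t_n}^{\alpha}u_h^n,u_h^n)+2\vertiii{\boldsymbol{\sigma}_h^n}_n^{2}+2(\boldsymbol{b}^n\cdot\boldsymbol{B}^n\boldsymbol{\sigma}_h^n,u_h^n)+2(c^n u_h^n,u_h^n)=2\lambda(\mathcal{I}Eu_h^n,u_h^n)+2(Ef^n,u_h^n).
\end{align*}
An application of the first estimate in Lemma~\ref{dfdoi} replaces the leading term by $D_{t_n}^{\alpha}\|u_h^n\|^{2}$ (at the cost of an inequality).

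Next I would bound the remaining terms carefully. For the convection contribution I use the symmetric positive definite structure of $\boldsymbol{B}^n$ via $(\boldsymbol{b}^n\cdot\boldsymbol{B}^n\boldsymbol{\sigma}_h^n,u_h^n)=(\boldsymbol{B}^n\boldsymbol{\sigma}_h^n,\boldsymbol{b}^n u_h^n)$ together with the weighted Cauchy--Schwarz inequality, giving $2|(\boldsymbol{B}^n\boldsymbol{\sigma}_h^n,\boldsymbol{b}^n u_h^n)|\leq 2\vertiii{\boldsymbol{\sigma}_h^n}_n^{2}+\tfrac{1}{2}\int_\Omega \widetilde{b}(u_h^n)^{2}\,d\boldsymbol{x}$; the $2\vertiii{\boldsymbol{\sigma}_h^n}_n^{2}$ precisely cancels the good term on the left. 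The reaction term is handled by $2(c^n u_h^n,u_h^n)\geq -2\|\widetilde{c}\|_{L^{\infty}(\Omega\times J)}\|u_h^n\|^2$. For the nonlocal term I use \eqref{intbound} followed by Young's inequality $2|\lambda|C_\mathcal{I}\|Eu_h^n\|\|u_h^n\|\leq \epsilon|\lambda|\|u_h^n\|^{2}+\epsilon^{-1}|\lambda|C_\mathcal{I}^{2}\|Eu_h^n\|^{2}$, and then expand $\|Eu_h^n\|^{2}$ according to the definition \eqref{2ndOrderExtrapolation} (using $\|Eu_h^n\|^2\leq 2(1+\mu_n)^2\|u_h^{n-1}\|^2+2\mu_n^2\|u_h^{n-2}\|^2$ for $n\geq n_\alpha+1$, and $\|Eu_h^n\|^2=\|u_h^{n-1}\|^2$ for $1\leq n\leq n_\alpha$). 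The forcing is simply kept as $2\|u_h^n\|\|Ef^n\|$.

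Collecting the inequalities produces, for $n\geq n_\alpha+1$,
\begin{align*}
D_{t_n}^{\alpha}\|u_h^n\|^{2}\leq \lambda^{S}\|u_h^n\|^{2}+2\epsilon^{-1}|\lambda|C_\mathcal{I}^{2}\bigl((1+\mu_n)^{2}\|u_h^{n-1}\|^{2}+\mu_n^{2}\|u_h^{n-2}\|^{2}\bigr)+\|u_h^n\|\cdot 2\|Ef^n\|,
\end{align*}
and the analogous one-term inequality for $1\leq n\leq n_\alpha$. Since on the graded mesh \eqref{gradedmesh1} the ratios $\mu_n$ are non-increasing in $n$, we can bound $(1+\mu_n)^{2}+\mu_n^{2}\leq (1+\mu_{n_\alpha+1})^{2}+\mu_{n_\alpha+1}^{2}$, which identifies the coefficients $\lambda_0^n=\lambda^{S}$ and $\sum_j\lambda_j^n\leq \Lambda^{S}$ required in Lemma~\ref{DFGI}. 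The stated time-step condition matches \eqref{timecondition} exactly, so a direct application of Lemma~\ref{DFGI} with $v^n=\|u_h^n\|$, $\xi^n=2\|Ef^n\|$, $\eta^n=\zeta^n=0$ yields \eqref{L2stabilityforu} (after absorbing the factor $2$ into $C_S=2C_\delta$ and using $\Lambda_n^{S}\leq \Lambda_\infty^{S}$ together with the monotonicity of $E_\alpha$).

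The main subtlety I expect is the delicate juggling with the convection term: the coefficient $\tfrac12\|\widetilde b\|_{L^\infty(\Omega\times J)}$ is the sharp outcome of a weighted Cauchy--Schwarz in the $\boldsymbol{B}^n$-inner product chosen so that the $\vertiii{\boldsymbol{\sigma}_h^n}_n^{2}$-term is exactly absorbed, not merely dominated; a naive bound in $\|\cdot\|$ would destroy the structure and prevent the cancellation. The second care point is the case split for the extrapolation $E$, which must respect the $n_\alpha$-threshold built into Lemma~\ref{DFGI} through $\Lambda_n^{S}$.
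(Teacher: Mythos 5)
Your proposal is correct and follows essentially the same route as the paper's proof: the same test functions $\boldsymbol{w}_h=\boldsymbol{\sigma}_h^n$, $v_h=u_h^n$, the first estimate of Lemma~\ref{dfdoi}, the weighted Cauchy--Schwarz/Young splitting that exactly absorbs $\vertiii{\boldsymbol{\sigma}_h^n}_n^2$ and produces the coefficient $\tfrac12\|\widetilde b\|_{L^\infty(\Omega\times J)}$, the case split on $n_\alpha$ for the extrapolation term yielding the same $\lambda^n_{n-i}$ and $\Lambda^S$, and the conclusion via Lemma~\ref{DFGI} with $\xi^n=2\|Ef^n\|$ and $\eta^n=\zeta^n=0$. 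No gaps.
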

\begin{proof}
Choose $\boldsymbol{w}_h = \boldsymbol{\sigma}^n_h$ and $v_h = u_h^n$ in (\ref{fullydiscrete1}) and (\ref{fullydiscrete2}), respectively, and then add (\ref{fullydiscrete1}) to (\ref{fullydiscrete2}) to obtain
\begin{align*}
(D_{t_n}^{\alpha} u_h^n,u_h^n)+ (\boldsymbol{B}^n\boldsymbol{\sigma}^n_h, \boldsymbol{\sigma}^n_h) +( \boldsymbol{b}^n \cdot \boldsymbol{B}^n \boldsymbol{\sigma}_h^n,u_h^n) + (c^nu_h^n,u_h^n) ~&=~\lambda (\mathcal{I}Eu_h^n,u_h^n ) + ( Ef^n,u_h^n). 
\end{align*}
Now, using Lemma~\ref{dfdoi}, the Cauchy–Schwarz inequality, and the relation (\ref{intbound}), we obtain
\begin{align}
\nonumber \frac{1}{2}D_{t_n}^{\alpha}\|u_h^n\|^2 +\vertiii{\boldsymbol{\sigma}_h^n}_n^2 & \leq (D_{t_n}^{\alpha} u_h^n,u_h^n)+ (\boldsymbol{B}^n\boldsymbol{\sigma}^n_h, \boldsymbol{\sigma}^n_h)\\
\nonumber&= - ( \boldsymbol{b}^n \cdot \boldsymbol{B}^n \boldsymbol{\sigma}_h^n,u_h^n) - (c^n u_h^n,u_h^n) + \lambda (\mathcal{I}Eu_h^n,u_h^n ) + ( Ef^n,u_h^n)  \\
\label{stability2ast}& \leq  \vertiii{\boldsymbol{b}^n}_n\vertiii{\boldsymbol{\sigma}_h^n}_n\|u_h^n\| +\|\widetilde{c}\|_{L^\infty(\Omega\times J)} \|u_h^n\|^2+C_{\mathcal{I}}|\lambda |\|Eu_h^n\|\;\|u_h^n\| +   \|Ef^n\|\;\|u_h^n\|.
\end{align}
 Applying Young's inequality $ab\leq \epsilon a^2 + (1/{4\epsilon})b^2$, for an appropriate $\epsilon>0$, we obtain $\vertiii{\boldsymbol{b}^n}_n\vertiii{\boldsymbol{\sigma}_h^n}_n\|u_h^n\|\leq \vertiii{\boldsymbol{\sigma}_h^n}_n^2 + (1/4)\vertiii{\boldsymbol{b}^n}_n^2\|u_h^n\|^2$ and $C_{\mathcal{I}}|\lambda |\|Eu_h^n\|\;\|u_h^n\|\leq (\epsilon|\lambda |/2) \|u_h^n\|^2 + (|\lambda |/2\epsilon)(C_{\mathcal{I}}\|Eu_h^n\|)^2$. Substituting these estimates into (\ref{stability2ast}), we arrive at
\begin{align}\label{stability2}
D_{t_n}^{\alpha}\|u_h^n\|^2  & \leq \left( \frac{1}{2}\vertiii{\boldsymbol{b}^n}_n^2 + 2\|\widetilde{c}\|_{L^\infty(\Omega\times J)} 
 + \epsilon|\lambda | \right)\|u_h^n\|^2  +\epsilon^{-1}|\lambda |C_{\mathcal{I}}^2\|Eu_h^n\|^2  +   2\|Ef^n\|\;\|u_h^n\| \nonumber\\ 
 &\leq  \sum_{i=0}^{n}\lambda_{n-i}^{n}\|u_{h}^{i}\|^2+ 2\|Ef^n\|\;\|u_h^n\|,
\end{align}
where 
\begin{align*}
& \lambda^n_{n-i} := \begin{cases}
 \frac{1}{2}\|\widetilde{b}\|_{L^\infty(\Omega\times J)}  + 2\|\widetilde{c}\|_{L^\infty(\Omega\times J)} 
 + \epsilon|\lambda | &: \;  1\leq n \leq N\;\&\; i=n, \\
\epsilon^{-1}|\lambda |C_{\mathcal{I}}^2 &: \; 1\leq n \leq n_\alpha\;\&\; i=n-1, \\ 
0&: \; 1\leq n \leq n_\alpha\;\&\; i=n-2, \\ 
 2\epsilon^{-1}|\lambda |C_{\mathcal{I}}^2(1+\mu_{n_\alpha+1})^2 &: \;  n_{\alpha}+1\leq n \leq N\;\&\; i=n-1, \\
 2\epsilon^{-1}|\lambda |C_{\mathcal{I}}^2\mu_{n_\alpha+1}^2 &: \;  n_{\alpha}+1\leq n \leq N\;\&\; i=n-2, \\
0 &: \;  3\leq n \leq N\;\&\; 0 \leq i \leq n-3, \\
    \end{cases}
    \end{align*}
    and
    \begin{align}
\nonumber  \displaystyle{\sum_{i=0}^n \lambda_{n-i}^n} &= \left(  \frac{1}{2}\|\widetilde{b}\|_{L^\infty(\Omega\times J)}  + 2\|\widetilde{c}\|_{L^\infty(\Omega\times J)} 
 + \epsilon|\lambda |\right) +  2\epsilon^{-1}|\lambda |C_{\mathcal{I}}^2\left((1+\mu_{n_\alpha+1})^2+\mu_{n_\alpha+1}^2\right)  = \max_{1 \leq n \leq N} \sum_{i=0}^n \lambda_{n-i}^n =:\Lambda^S.
\end{align}
An appeal to the discrete fractional Gr\"{o}nwall inequality (Theorem~\ref{DFGI}) to (\ref{stability2}) yields
\begin{align*}
&\|u_h^n\| \leq C_\delta E_{\alpha}(C_\delta \Lambda_{n}^S t_n^{\alpha})\left( \|u_h^0\| +2 \max_{1\leq j \leq n} \sum_{i=1}^jP_{\alpha}^{j,i}\|Ef^i\|\right), \quad 1 \leq n \leq N. 
\end{align*}  
Finally, using $\Lambda_n^S\leq \Lambda_\infty^S$ and the non-decreasing property of $E_\alpha(\cdot)$, we obtain the desired estimate (\ref{L2stabilityforu}).
\end{proof}

The next result deals with the stability of flux using the proposed method.
\begin{theorem}\label{stabilitytheorem2} If for some $\epsilon>0$ and $\delta>1$, $\max\limits_{1\leq n \leq N}\Delta t_n\leq \left( \delta\;\lambda^F_\epsilon\Gamma(2-\alpha)\right)^{-1/\alpha}$, then flux $\boldsymbol{\sigma}_h^n$ of the problem (\ref{fullydiscrete1}--\ref{fullydiscrete2}) satisfies
\begin{align}\label{L2stabilityflux}
   & \|\boldsymbol{\sigma}_h^n\| \leq C_F  E_{\alpha}( C_\delta \Lambda_\infty^F t_{n}^{\alpha})\left( \|\boldsymbol{\sigma}_h^{0} \|+\|u_h^0\| + \max_{1\leq j \leq n}\left( \|Ef^{j}\| + \sum_{i=1}^{j}P^{j,i}_\alpha\|Ef^{i}\|\right)\right), \quad 1 \leq n \leq N,
\end{align}
where 
\begin{align*}
    & \lambda^F_\epsilon:=\frac{1+\epsilon}{2}\|\widetilde{b}\|_{L^\infty(\Omega\times J)} + \frac{L_B T^{1-\alpha}}{\beta_0\Gamma(2-\alpha)} ,\quad C_\delta:=\frac{\delta}{\delta-1},\\
    & \Lambda_{\infty}^F:=\lim_{n\to\infty}\Lambda_{n}^F,\quad \Lambda_{n}^F:=\begin{cases}
    \lambda^F_\epsilon &:\;1\leq n \leq n_\alpha:=\min\left(\lfloor{\frac{1}{\alpha}\rfloor},N\right),\\
    \lambda^F_\epsilon\left(1+\max\limits_{n_\alpha +1\leq j \leq n}\frac{P_\alpha^{j,j}}{P_\alpha^{j,j-1}}\right)&:\;n_\alpha + 1\leq n \leq N,
    \end{cases}\\
    & C_F:=\frac{C_\delta}{\sqrt{\beta_0}}\max\left\{ 1, \sqrt{\gamma_0}, \left(2T^\alpha\frac{1+\epsilon}{2\epsilon}\right)^{1/2}, C_{\delta} E_\alpha(C_\delta \Lambda_\infty^S T^\alpha)\left(2T^\alpha\frac{1+\epsilon}{2\epsilon}\right)^{1/2} \left( C_c+ C_{\mathcal{I}}|\lambda |(1+2\mu_{n_\alpha +1})  \right) \right\},\;\text{ and}\\
 &\text{the function $\widetilde{b}$ is defined in Theorem~\ref{stabilitytheorem1}.}
\end{align*}
\end{theorem}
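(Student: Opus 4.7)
The plan is to derive an energy inequality for $\vertiii{\boldsymbol{\sigma}_h^n}_n^2$ that fits the template of Lemma~\ref{DFGI}, and then close the resulting bound using the $L^2$-stability of $u_h^n$ from Theorem~\ref{stabilitytheorem1}. I begin by applying $D_{t_n}^{\alpha}$ to the mixed identity (\ref{fullydiscrete1}) and testing with $\boldsymbol{w}_h=\boldsymbol{\sigma}_h^n$; in parallel I test (\ref{fullydiscrete2}) with $v_h=\nabla\cdot\boldsymbol{\sigma}_h^n$, which is admissible since $\nabla\cdot\boldsymbol{W}_h\subseteq V_h$. Eliminating the common coupling term $(D_{t_n}^{\alpha}u_h^n,\nabla\cdot\boldsymbol{\sigma}_h^n)$ between the two produces the identity
\begin{equation*}
(D_{t_n}^{\alpha}(\boldsymbol{B}^n\boldsymbol{\sigma}_h^n),\boldsymbol{\sigma}_h^n) + \|\nabla\cdot\boldsymbol{\sigma}_h^n\|^2 \;=\; \bigl(\boldsymbol{b}^n\cdot\boldsymbol{B}^n\boldsymbol{\sigma}_h^n + c^n u_h^n - \lambda\,\mathcal{I}E u_h^n - E f^n,\;\nabla\cdot\boldsymbol{\sigma}_h^n\bigr).
\end{equation*}

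Next, I invoke the Lipschitz-pollution form (\ref{pollutionterm}) of Lemma~\ref{dfdoi} to bound $(D_{t_n}^{\alpha}(\boldsymbol{B}^n\boldsymbol{\sigma}_h^n),\boldsymbol{\sigma}_h^n)$ from below by $\tfrac{1}{2}D_{t_n}^{\alpha}\vertiii{\boldsymbol{\sigma}_h^n}_n^2 - \tfrac{L_B t_n^{1-\alpha}}{2\beta_0\Gamma(2-\alpha)}\vertiii{\boldsymbol{\sigma}_h^n}_n^2$. For the right-hand side I use the pointwise Cauchy--Schwarz estimate $|(\boldsymbol{b}^n)^{T}\boldsymbol{B}^n\boldsymbol{\sigma}_h^n|\leq \widetilde{b}^{\,1/2}\,((\boldsymbol{\sigma}_h^n)^{T}\boldsymbol{B}^n\boldsymbol{\sigma}_h^n)^{1/2}$ (coming from $\widetilde{b}=\boldsymbol{b}^{T}\boldsymbol{A}^{-1}\boldsymbol{b}$) together with Young's inequality, tuned so that the convection term contributes exactly $\tfrac{1+\epsilon}{2}\|\widetilde{b}\|_{L^\infty(\Omega\times J)}\vertiii{\boldsymbol{\sigma}_h^n}_n^2$, while the three remaining terms $c^n u_h^n$, $\lambda\mathcal{I}E u_h^n$, $E f^n$ are grouped and estimated with a complementary Young parameter calibrated so that their collective $\|\nabla\cdot\boldsymbol{\sigma}_h^n\|^2$-charge, together with the convection charge, exactly consumes the unit budget on the left. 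This absorption delivers an inequality of the form
\begin{equation*}
D_{t_n}^{\alpha}\vertiii{\boldsymbol{\sigma}_h^n}_n^2 \;\leq\; \lambda_0^n\,\vertiii{\boldsymbol{\sigma}_h^n}_n^2 + C(\epsilon)\bigl(C_c\|u_h^n\|+|\lambda|C_{\mathcal{I}}\|E u_h^n\|+\|E f^n\|\bigr)^2,
\end{equation*}
with $\max_n\lambda_0^n$ controlled by $\lambda^F_\epsilon$, so that the prescribed time-step restriction is precisely (\ref{timecondition}).

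I then apply Lemma~\ref{DFGI} with $v^n=\vertiii{\boldsymbol{\sigma}_h^n}_n$, distributing the forcing between the $\eta$-slot (to produce the $(2t_n^\alpha)^{1/2}\max_j\|E f^j\|$ part of the bound) and the $\zeta$-slot (to produce the $\sum_i P^{j,i}_\alpha\|E f^i\|$ part), and placing the $u_h^j$-contributions in the $\zeta$-slot. Substituting the $L^2$-stability estimate (\ref{L2stabilityforu}) for $\|u_h^j\|$ and using $\|E u_h^j\|\leq(1+2\mu_{n_\alpha+1})\max_{k\leq j}\|u_h^k\|$ (which is the origin of the factor $(1+2\mu_{n_\alpha+1})$ appearing in $C_F$), together with the collapse identity in Lemma~\ref{discretekernelproperties}\ref{itm:b}, eliminates the $u_h^j$-terms. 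Finally, the norm equivalences $\sqrt{\beta_0}\|\boldsymbol{\sigma}_h^n\|\leq\vertiii{\boldsymbol{\sigma}_h^n}_n$ and $\vertiii{\boldsymbol{\sigma}_h^0}_0\leq\sqrt{\gamma_0}\|\boldsymbol{\sigma}_h^0\|$, combined with the monotonicity of $E_\alpha(\cdot)$ and $\Lambda^F_n\leq\Lambda^F_\infty$, yield the desired estimate (\ref{L2stabilityflux}).

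The main obstacle is the Young-inequality bookkeeping in the second step: the parameter must simultaneously (i) produce the sharp coefficient $\tfrac{1+\epsilon}{2}\|\widetilde{b}\|_{L^\infty}$ on $\vertiii{\boldsymbol{\sigma}_h^n}_n^2$ so that the time-step threshold depends only on $\lambda^F_\epsilon$, and (ii) leave a leftover fraction of $\|\nabla\cdot\boldsymbol{\sigma}_h^n\|^2$ exactly sufficient to absorb the three lower-order divergence-coupled terms, which is what produces the $\tfrac{1+\epsilon}{2\epsilon}$ factor inside $C_F$. A secondary difficulty is $\alpha$-robustness: one must verify that the pollution coefficient $L_B t_n^{1-\alpha}/(\beta_0\Gamma(2-\alpha))$ and the Mittag-Leffler argument $C_\delta\Lambda^F_\infty t_n^\alpha$ both remain bounded as $\alpha\to 1^-$, which is the case because $\Gamma(2-\alpha)\to 1$ and $T^{1-\alpha}\to 1$ in that limit.
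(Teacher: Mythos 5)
Your proposal is correct and follows essentially the same route as the paper: apply $D_{t_n}^{\alpha}$ to (\ref{fullydiscrete1}), test with $\boldsymbol{\sigma}_h^n$, cancel the coupling term against (\ref{fullydiscrete2}), invoke the Lipschitz-pollution bound (\ref{pollutionterm}), perform the $\frac{1}{1+\epsilon}+\frac{\epsilon}{1+\epsilon}=1$ Young split to reach exactly the inequality (\ref{newfullydiscrete_astast}), then apply Lemma~\ref{DFGI} and close with Theorem~\ref{stabilitytheorem1} and the norm equivalence (\ref{equivalentnorms}). The only (harmless) deviation is your choice of test function $v_h=\nabla\cdot\boldsymbol{\sigma}_h^n$ in place of the paper's $v_h=D_{t_n}^{\alpha}u_h^n$, so the nonnegative term consumed in the absorption is $\|\nabla\cdot\boldsymbol{\sigma}_h^n\|^2$ rather than $\|D_{t_n}^{\alpha}u_h^n\|^2$; this yields the identical differential inequality and constants.
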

\begin{proof}
We start by taking the discrete fractional derivative of (\ref{fullydiscrete1}):
\begin{align}\label{newfullydiscrete}   (D_{t_n}^{\alpha}\boldsymbol{B}^n\boldsymbol{\sigma}^n_h, \boldsymbol{w}_h) + (D_{t_n}^{\alpha} u_h^n, \nabla \cdot  \boldsymbol{w}_h)~ &=~0.
\end{align}
Choosing $v_h = D^{\alpha}_{t_n} u_h^n$ and $\boldsymbol{w}_h = \boldsymbol{\sigma}^n_h$  in (\ref{fullydiscrete2}) and (\ref{newfullydiscrete}), respectively, and adding them, we get 
\begin{align*}
\|D_{t_n}^{\alpha}u_h^n\|^2 +(D_{t_n}^{\alpha}\boldsymbol{B}^n\boldsymbol{\sigma}^n_h, \boldsymbol{\sigma}^n_h)+( \boldsymbol{b}^n \cdot \boldsymbol{B}^n \boldsymbol{\sigma}_h^n,D^{\alpha}_{t_n} u_h^n) + (c^n u_h^n,D^{\alpha}_{t_n} u_h^n) ~&=~\lambda (\mathcal{I} E u_h^n,D^{\alpha}_{t_n} u_h^n ) + (Ef^n,D^{\alpha}_{t_n} u_h^n).  
\end{align*}
Applying Lemma~\ref{dfdoi}, the Cauchy-Schwarz inequality, and relations (\ref{intbound}) and (\ref{coefbounds}), we obtain
\begin{align}
\|D_{t_n}^{\alpha}u_h^n\|^2 +\frac{1}{2}D_{t_n}^{\alpha}\vertiii{\boldsymbol{\sigma}_h^n}_n^2  \;\leq \;& \frac{L_B t_n^{1-\alpha}}{2\beta_0\Gamma(2-\alpha)}\vertiii{\boldsymbol{\sigma}^n_h}_n^2 - ( \boldsymbol{b}^n \cdot \boldsymbol{B}^n \boldsymbol{\sigma}_h^n,D^{\alpha}_{t_n} u_h^n) - (c^n u_h^n,D^{\alpha}_{t_n} u_h^n)\nonumber\\
&+ \lambda (\mathcal{I} E u_h^n,D^{\alpha}_{t_n} u_h^n ) + ( Ef^n,D^{\alpha}_{t_n} u_h^n) \nonumber\\
\;\leq \;& \frac{L_B t_n^{1-\alpha}}{2\beta_0\Gamma(2-\alpha)}\vertiii{\boldsymbol{\sigma}^n_h}_n^2 + \vertiii{\boldsymbol{b}^n}_n\vertiii{\boldsymbol{\sigma}^n_h}_n \|D_{t_n}^{\alpha}u_h^n\| + C_c   \|u_h^n\|\;\|D_{t_n}^{\alpha}u_h^n\|\nonumber\\
&+|\lambda| C_{\mathcal{I}}\|Eu_h^n\|\;\|D_{t_n}^{\alpha}u_h^n\| + \|Ef^n\|\;\|D^{\alpha}_{t_n} u_h^n\|\label{newfullydiscrete_ast}. 
\end{align}
Using Young's inequality $ab\leq \epsilon a^2 + (1/{4\epsilon})b^2$, for an appropriate $\epsilon>0$, we arrive at the following estimates
\begin{align*}
&\vertiii{\boldsymbol{b}^n}_n\vertiii{\boldsymbol{\sigma}_h^n}_n\|D^{\alpha}_{t_n} u_h^n\|\leq  \frac{1}{1+\epsilon}\|D^{\alpha}_{t_n} u_h^n\|^2+ \frac{1+\epsilon}{4}\vertiii{\boldsymbol{b}^n}_n^2\vertiii{\boldsymbol{\sigma}_h^n}_n^2,\quad\text{and}\\
&\left(C_c\|u_h^n\|+ C_{\mathcal{I}}|\lambda |\|Eu_h^n\| + \|Ef^n\|\right)\|D^{\alpha}_{t_n} u_h^n\|\leq \frac{\epsilon}{1+\epsilon}\|D^{\alpha}_{t_n} u_h^n\|^2+ \frac{1+\epsilon}{4\epsilon}( C_c\|u_h^n\|+ C_{\mathcal{I}}|\lambda |\|Eu_h^n\| + \|Ef^n\| )^2.
\end{align*} Substituting these two estimates into (\ref{newfullydiscrete_ast}), we obtain
\begin{align}
D_{t_n}^{\alpha}\vertiii{\boldsymbol{\sigma}_h^n}_n^2& \leq \left(\frac{1+\epsilon}{2}\|\widetilde{b}\|_{L^\infty(\Omega\times J)} + \frac{L_B T^{1-\alpha}}{\beta_0\Gamma(2-\alpha)}  \right)\vertiii{\boldsymbol{\sigma}^n_h}_n^2 + \frac{1+\epsilon}{2\epsilon}\left( C_c\|u_h^n\|+ C_{\mathcal{I}}|\lambda |\|Eu_h^n\| + \|Ef^n\|  \right)^2. \label{newfullydiscrete_astast}
\end{align}
Under the assumption $\max\limits_{1\leq n \leq N}\Delta t_n\leq \left( \delta\;\lambda^F_\epsilon\Gamma(2-\alpha)\right)^{-1/\alpha}$, applying the discrete fractional Gr\"{o}nwall inequality (Theorem~\ref{DFGI}) to (\ref{newfullydiscrete_astast}), we obtain
\begin{align}
\vertiii{\boldsymbol{\sigma}_h^n}_n & \leq C_\delta E_\alpha\left(C_\delta \Lambda_{n}^F t_n^\alpha \right) \left(\vertiii{\boldsymbol{\sigma}^0_h}_0 + \left(2t_n^\alpha\frac{1+\epsilon}{2\epsilon}\right)^{1/2}\max_{1\leq j \leq n}\left( C_c\|u_h^j\|+ C_{\mathcal{I}}|\lambda |\|Eu_h^j\| + \|Ef^j\|  \right) \right).\label{newfullydiscrete_astastast}
\end{align}
Now, an application of (\ref{equivalentnorms}) with $\Lambda_n^F\leq \Lambda_\infty^F$, and $\max\limits_{n_\alpha +1\leq j\leq n}\mu_j\leq \mu_{n_\alpha +1}$ in (\ref{newfullydiscrete_astastast}) shows
\begin{align}
\|\boldsymbol{\sigma}_h^n\|&  \leq \frac{C_\delta}{\sqrt{\beta_0}} E_\alpha\left(C_\delta \Lambda_{\infty}^F t_n^\alpha \right) \Bigg(\sqrt{\gamma_0} \|\boldsymbol{\sigma}^0_h\| + \Big(2T^\alpha\frac{1+\epsilon}{2\epsilon}\Big)^{1/2}\Big( (C_c+ C_{\mathcal{I}}|\lambda |(1+2\mu_{n_\alpha +1})) \max_{0\leq j \leq n}\|u_h^j\| + \max_{1\leq j \leq n}\|Ef^j\|  \Big) \Bigg). \label{newfullydiscrete_astastastast}
\end{align}
Finally, an appeal to the Theorem~\ref{stabilitytheorem1} in (\ref{newfullydiscrete_astastastast}) yields the result (\ref{L2stabilityflux}). This completes the rest of the proof.
\end{proof}

\subsection{Error analysis}\label{section5}
This subsection derives some auxiliary results and then establishes optimal error estimates. At any temporal grid point $t_n$, the variational problem (\ref{variation1}--\ref{variation2}) implies, for $1\leq n \leq N$,
\begin{align}
\label{variationalatt_n1} (\boldsymbol{B}(t_n)\boldsymbol{\sigma}(t_n), \boldsymbol{w}_h) + ( u(t_n),  \nabla \cdot \boldsymbol{w}_h)~ &=~0 \quad \forall \boldsymbol{w}_h \in \boldsymbol{W}_h,\\
\nonumber(D_{t_n}^{\alpha}u(t_n),v_h)-(\nabla \cdot \boldsymbol{\sigma}(t_n),v_h) +( \boldsymbol{b}(t_n) \cdot \boldsymbol{B}(t_n) \boldsymbol{\sigma}(t_n),v_h) + (c(t_n)u(t_n),v_h) ~&=~\lambda (\mathcal{I}u(t_n),v_h ) \\
\label{variationalatt_n2}+ (f(t_n),v_h) + (\Upsilon^n +r^n, v_h) &\; \forall\;v_h \in V_h.   
\end{align}
After subtracting (\ref{fullydiscrete1}--\ref{fullydiscrete2}) from (\ref{variationalatt_n1}--\ref{variationalatt_n2}), we obtain the following error equation, for $1\leq n \leq N,$
\begin{align}
\label{errorequation1}
(\boldsymbol{B}^n e_{\boldsymbol{\sigma}}^n, \boldsymbol{w}_h) + (e_u^n, \nabla  \cdot \boldsymbol{w}_h)~ &=~0 \quad \forall \boldsymbol{w}_h \in \boldsymbol{W}_h, \\
\label{errorequation2}
(D_{t_n}^{\alpha}e^n_u,v_h) -(\nabla \cdot e_{\boldsymbol{\sigma}}^n,v_h) +( \boldsymbol{b}^n \cdot \boldsymbol{B}^n e_{\boldsymbol{\sigma}}^n,v_h) + (c^n e_{u}^n,v_h)  &= \lambda (\mathcal{I}(Ee^n_u),v_h ) +(\Upsilon^n+ r^n + r_f^n, v_h) \quad \forall\;v_h \in V_h,   
\end{align}
where $\Upsilon^n:= (D^{\alpha}_{t_n}u(t_n) - \partial^{\alpha}_{t}u(t_n))$, $r^n:= \lambda \mathcal{I}(u(t_n) - Eu(t_n))$, and $r_f^n:= f(t_n) - Ef(t_n))$. Also, $e^n_u:= u(t_n)-u_h^n$ and $e^n_{\boldsymbol{\sigma}}:= \boldsymbol{\sigma}(t_n)-\boldsymbol{\sigma}_h^n$ denote the error between the exact solution $u(t_n)$ and the approximate solution $u_h^n$ and between the flux $\boldsymbol{\sigma}(t_n)$ and its approximation $\boldsymbol{\sigma}_h^n$, respectively, at time level $t = t_n$. To obtain optimal error estimates, we decompose the errors $e^n_u$ and $e^n_{\boldsymbol{\sigma}}$ further as follows:
\begin{align*}
    &e^n_u = \eta^n + \theta^n, \; \eta^n = u(t_n)- P_h u(t_n), \; \theta^n =  P_h u(t_n) - u_h^n ,\\
    &e^n_{\boldsymbol{\sigma}} =   \boldsymbol{\zeta}^n + \boldsymbol{\xi}^n, \; \boldsymbol{\zeta}^n = \boldsymbol{\sigma}(t_n) - \Pi_h \boldsymbol{\sigma}(t_n) , \; \boldsymbol{\xi}^n =  \Pi_h \boldsymbol{\sigma}(t_n)  - \boldsymbol{\sigma}_h^n.
\end{align*}
 As the estimates for the projection errors $\eta^n:=u(t_n)- P_h u(t_n) $ and $\boldsymbol{\zeta}^n:=\boldsymbol{\sigma}(t_n) - \Pi_h \boldsymbol{\sigma}(t_n) $ are already known from (\ref{approxprop}), it is enough to estimate $\theta^n$ and $\boldsymbol{\xi}^n$. Using (\ref{errorequation1}-\ref{errorequation2}) $\theta^n$ and $\boldsymbol{\xi}^n$ satisfy
 \begin{align}
     \label{theta1}(\boldsymbol{B}^n ({\boldsymbol{\xi}}^n + {\boldsymbol{\zeta}}^n), \boldsymbol{w}_h) + ( \theta^n,  \nabla \cdot \boldsymbol{w}_h)~ &=~0 \quad \forall \boldsymbol{w}_h \in \boldsymbol{W}_h, \\
\nonumber
(D_{t_n}^{\alpha}\theta^n,v_h) -(\nabla \cdot {\boldsymbol{\xi}}^n,v_h)  &= \lambda (\mathcal{I}(E (\theta^n+\eta^n)),v_h )-(D_{t_n}^{\alpha}\eta^n,v_h)+ (\Upsilon^n + r^n + r_f^n, v_h)  \\
\label{theta2}& -( \boldsymbol{b}^n \cdot \boldsymbol{B}^n   (\boldsymbol{\xi}^n+ \boldsymbol{\zeta}^n) ,v_h)- (c^n (  \theta^n + \eta^n ),v_h)\quad \forall\;v_h \in V_h, \; 1\leq n \leq N.  
 \end{align}
\begin{lemma}\label{thetaestimate_l2norm}
If for some $\delta>1$ and $\epsilon>0$, $\max\limits_{1\leq n \leq N}\Delta t_n\leq \left( \delta\;\lambda^S_\epsilon\Gamma(2-\alpha)\right)^{-1/\alpha}$, then $\theta^{n}$ satisfies
\begin{align}
\|\theta^n\| \;\leq \;& C_\delta E_{\alpha}(C_\delta \widetilde{\Lambda}_{\infty}^S t_n^{\alpha})\Bigg( \|\theta^0\|  + 2\max_{1\leq j \leq n} \sum_{i=1}^j P_{\alpha}^{j,i}\Big(\|\Upsilon^i \|+ \|r^i\|+ \|r_f^i\|  + \| D^{\alpha}_{t_i}\eta^i\| + C_c\|\eta^i\| + C_{\mathcal{I}}|\lambda|\|E\eta^i\|\nonumber\\
&\hspace{1.5cm}   + \big(\gamma_0  \|\widetilde{b}\|_{L^\infty(\Omega\times J)}\big)^{1/2}\|\boldsymbol{\zeta}^{i}\| \Big) + \max_{1\leq j \leq n}\Big(\gamma_0\frac{1+\epsilon}{2\epsilon} \sum_{i=1}^{j}P_{\alpha}^{j,i} \|\boldsymbol{\zeta}^{i}\|^2 \Big)^{1/2}  \Bigg), \quad 1 \leq n \leq N, \label{L2errorestimateeqn1} 
\end{align}  
where
\begin{align*}
    &C_\delta := \delta/(\delta-1),\;\lambda^S_\epsilon:=  \frac{1+\epsilon}{2}\|\widetilde{b}\|_{L^\infty(\Omega\times J)}  + 2\|\widetilde{c}\|_{L^\infty(\Omega\times J)} 
 + \epsilon|\lambda |,\\
&\widetilde{\Lambda}_{\infty}^S:=\lim_{n\to\infty}\widetilde{\Lambda}_{n}^S,\quad \widetilde{\Lambda}_{n}^S:=\begin{cases}
    \widetilde{\Lambda}^S &:\;1\leq n \leq n_\alpha:=\min\left(\lfloor{\frac{1}{\alpha}\rfloor},N\right),\\
    \widetilde{\Lambda}^S\Big(1+\max\limits_{n_\alpha +1\leq j \leq n}\frac{P_\alpha^{j,j}}{P_\alpha^{j,j-1}}\Big)&:\;n_\alpha + 1\leq n \leq N,
\end{cases} \\
 & \widetilde{\Lambda}^S :=\Big(  \frac{1+\epsilon}{2}\|\widetilde{b}\|_{L^\infty(\Omega\times J)}  + 2\|\widetilde{c}\|_{L^\infty(\Omega\times J)} 
 + \epsilon|\lambda |\Big) +  2\epsilon^{-1}|\lambda |C_{\mathcal{I}}^2\left((1+\mu_{n_\alpha+1})^2+\mu_{n_\alpha+1}^2\right),\;\text{ and}\\
 &\text{the functions $\widetilde{b}$ and $ \widetilde{c}$ are defined in Theorem~\ref{stabilitytheorem1}.}
\end{align*}
\end{lemma}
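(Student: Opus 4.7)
The plan is to couple the two error equations (\ref{theta1})--(\ref{theta2}) via a test-function pair that cancels the divergence coupling, then bound the resulting driving terms using Cauchy--Schwarz, Young's inequality, the coercivity (\ref{supdc1}), and the integral bound (\ref{intbound}), and finally invoke the discrete fractional Gr\"onwall inequality (Lemma~\ref{DFGI}). Concretely, I would set $\boldsymbol{w}_h=\boldsymbol{\xi}^n$ in (\ref{theta1}) and $v_h=\theta^n$ in (\ref{theta2}). Adding the two identities cancels $(\theta^n,\nabla\cdot\boldsymbol{\xi}^n)$ against $-(\nabla\cdot\boldsymbol{\xi}^n,\theta^n)$, and the first estimate of Lemma~\ref{dfdoi} bounds $(D_{t_n}^\alpha\theta^n,\theta^n)$ from below by $\tfrac{1}{2}D_{t_n}^\alpha\|\theta^n\|^2$, producing
\[
\tfrac{1}{2}D_{t_n}^\alpha\|\theta^n\|^2+\vertiii{\boldsymbol{\xi}^n}_n^2 \;\leq\; -(\boldsymbol{B}^n\boldsymbol{\zeta}^n,\boldsymbol{\xi}^n)+(\text{scalar forcing terms involving }\theta^n).
\]

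The principal technical point is that two right-hand-side contributions carry $\vertiii{\boldsymbol{\xi}^n}_n$: the flux-projection cross term $-(\boldsymbol{B}^n\boldsymbol{\zeta}^n,\boldsymbol{\xi}^n)$ and the convection term $-(\boldsymbol{b}^n\cdot\boldsymbol{B}^n\boldsymbol{\xi}^n,\theta^n)$. Using the pointwise weighted Cauchy--Schwarz bound $|\boldsymbol{b}\cdot\boldsymbol{B}\boldsymbol{\xi}|\leq (\boldsymbol{b}^T\boldsymbol{B}\boldsymbol{b})^{1/2}(\boldsymbol{\xi}^T\boldsymbol{B}\boldsymbol{\xi})^{1/2}$ followed by Young's inequality with the split parameters $\tfrac{\epsilon}{1+\epsilon}$ and $\tfrac{1}{1+\epsilon}$, I bound the first by $\tfrac{\epsilon}{1+\epsilon}\vertiii{\boldsymbol{\xi}^n}_n^2+\tfrac{1+\epsilon}{4\epsilon}\vertiii{\boldsymbol{\zeta}^n}_n^2$ and the second by $\tfrac{1}{1+\epsilon}\vertiii{\boldsymbol{\xi}^n}_n^2+\tfrac{1+\epsilon}{4}\|\widetilde{b}\|_{L^\infty}\|\theta^n\|^2$, so that the $\boldsymbol{\xi}^n$-energy is absorbed exactly into the left-hand side. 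This split is precisely what generates the coefficient $\tfrac{1+\epsilon}{2}\|\widetilde{b}\|_{L^\infty}$ appearing inside $\lambda^S_\epsilon$ and $\widetilde{\Lambda}^S$.

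The remaining right-hand-side terms are standard: $(\Upsilon^n+r^n+r_f^n,\theta^n)$, $(D_{t_n}^\alpha\eta^n,\theta^n)$, $-(c^n\eta^n,\theta^n)$, $\lambda(\mathcal{I}(E\eta^n),\theta^n)$, and $-(\boldsymbol{b}^n\cdot\boldsymbol{B}^n\boldsymbol{\zeta}^n,\theta^n)$ are all bounded by Cauchy--Schwarz and, via (\ref{supdc1}) and the norm equivalence (\ref{equivalentnorms}), collected into a linear driving term $\|\theta^n\|\,\xi^n$ whose $\xi^n$ matches precisely the data appearing inside the $P_\alpha^{j,i}$-weighted sum of (\ref{L2errorestimateeqn1}); in particular the convection remainder contributes $(\gamma_0\|\widetilde{b}\|_{L^\infty})^{1/2}\|\boldsymbol{\zeta}^n\|\|\theta^n\|$. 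The reaction term $-(c^n\theta^n,\theta^n)$ produces $\|\widetilde{c}\|_{L^\infty}\|\theta^n\|^2$, and the nonlocal term $\lambda(\mathcal{I}(E\theta^n),\theta^n)$ is split by Young's as $\tfrac{\epsilon|\lambda|}{2}\|\theta^n\|^2+\tfrac{|\lambda|}{2\epsilon}C_{\mathcal{I}}^2\|E\theta^n\|^2$. Expanding $\|E\theta^n\|^2\leq 2(1+\mu_{n_\alpha+1})^2\|\theta^{n-1}\|^2+2\mu_{n_\alpha+1}^2\|\theta^{n-2}\|^2$ for $n\geq n_\alpha+1$ via (\ref{2ndOrderExtrapolation}) (with the trivial single-level bound for $1\leq n\leq n_\alpha$) produces the coefficients $\lambda^n_{n-i}$ whose maximal sum is exactly $\widetilde{\Lambda}^S$.

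Multiplying by $2$ and regrouping puts the inequality into the form required by Lemma~\ref{DFGI} with $v^n=\|\theta^n\|$, $\eta^n\equiv 0$, $\zeta^n=\bigl(\gamma_0(1+\epsilon)/(2\epsilon)\bigr)^{1/2}\|\boldsymbol{\zeta}^n\|$, and $\xi^n=2\bigl(\|\Upsilon^n\|+\|r^n\|+\|r_f^n\|+\|D_{t_n}^\alpha\eta^n\|+C_c\|\eta^n\|+C_{\mathcal{I}}|\lambda|\|E\eta^n\|+(\gamma_0\|\widetilde{b}\|_{L^\infty})^{1/2}\|\boldsymbol{\zeta}^n\|\bigr)$. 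Under the hypothesized time-step restriction $\Delta t_n\leq (\delta\lambda^S_\epsilon\Gamma(2-\alpha))^{-1/\alpha}$, invoking Lemma~\ref{DFGI} and then using $\widetilde{\Lambda}_n^S\leq\widetilde{\Lambda}_\infty^S$ with the monotonicity of $E_\alpha$ yields (\ref{L2errorestimateeqn1}). The main obstacle I anticipate is the bookkeeping of the Young split between the two $\vertiii{\boldsymbol{\xi}^n}_n$-carrying terms so that the $\boldsymbol{\xi}^n$-energy is absorbed with coefficient exactly one, together with the careful identification of which data enter linearly (in $\xi^n$) versus quadratically (in $\zeta^n$) in the Gr\"onwall driving data--this is what distinguishes the $\widetilde{\Lambda}^S$ here from the $\Lambda^S$ of Theorem~\ref{stabilitytheorem1}.
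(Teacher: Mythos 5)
Your proposal is correct and follows essentially the same route as the paper: the same test-function pair $(\boldsymbol{w}_h,v_h)=(\boldsymbol{\xi}^n,\theta^n)$, the same $\tfrac{\epsilon}{1+\epsilon}+\tfrac{1}{1+\epsilon}$ Young split to absorb $\vertiii{\boldsymbol{\xi}^n}_n^2$, the same treatment of the nonlocal term and extrapolation coefficients yielding $\widetilde{\Lambda}^S$, and the same invocation of Lemma~\ref{DFGI} with the identical assignment of linear ($\xi^n$) versus quadratic ($\zeta^n$) driving data. No gaps.
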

\begin{proof}
Substitute $\boldsymbol{w}_h = \boldsymbol{\xi}^n$ and $v_h = \theta^n$ in (\ref{theta1}) and (\ref{theta2}), respectively, and then apply (\ref{theta1}) in (\ref{theta2}) to obtain
\begin{align*}
(D_{t_n}^{\alpha}\theta^n,\theta^n) + (\boldsymbol{B}^n {\boldsymbol{\xi}}^n, \boldsymbol{\xi}^n) =& - (\boldsymbol{B}^n {\boldsymbol{\zeta}}^n, \boldsymbol{\xi}^n) - ( \boldsymbol{b}^n \cdot \boldsymbol{B}^n  \boldsymbol{\xi}^n ,\theta^n)- (c^n  \theta^n ,\theta^n) + \lambda (\mathcal{I}E\theta^n, \theta^n) \\
&+ (\Upsilon^n + r^n + r_f^n -D_{t_n}^{\alpha}\eta^n - \boldsymbol{b}^n \cdot \boldsymbol{B}^n\boldsymbol{\zeta}^n - c^n \eta^n +\lambda \mathcal{I}E\eta^n  , \theta^n). 
\end{align*}
Now, an appeal to Lemma~\ref{dfdoi} and the Cauchy–Schwarz inequality along with the relations (\ref{intbound}) and (\ref{equivalentnorms}) yields 
\begin{align}
\frac{1}{2}D_{t_n}^{\alpha}\|\theta^n\|^2 + \vertiii{\boldsymbol{\xi}^n}_n^2 \leq &\sqrt{\gamma_0}\|\boldsymbol{\zeta}^n\|\;\vertiii{\boldsymbol{\xi}^n}_n + \vertiii{\boldsymbol{b}^n}_n\vertiii{\boldsymbol{\xi}^n}_n\|\theta^n\| +\|\widetilde{c}\|_{L^\infty(\Omega\times J)} \|\theta^n\|^2+C_{\mathcal{I}}|\lambda |\|E\theta^n\|\;\|\theta^n\|  \nonumber \\
&+ \|\Upsilon^n + r^n + r_f^n -D_{t_n}^{\alpha}\eta^n - \boldsymbol{b}^n \cdot \boldsymbol{B}^n\boldsymbol{\zeta}^n - c^n \eta^n +\lambda \mathcal{I}E\eta^n \|\|  \theta^n\|.\label{L2errorestimateeqn1_ast}
\end{align}
Using Young's inequality, for any $\epsilon>0$, we obtain
\begin{align}
&\sqrt{\gamma_0}\|\zeta^n\|\vertiii{\boldsymbol{\xi}^n}_n \leq  \frac{\epsilon}{1+\epsilon}\vertiii{\boldsymbol{\xi}^n}_n^2+ \gamma_0\frac{1+\epsilon}{4\epsilon}\|\zeta^n\|^2,\label{L2errorestimateeqn1_ast1}\\
&\vertiii{\boldsymbol{b}^n}_n\vertiii{\boldsymbol{\xi}^n}_n\|\theta\|\leq  \frac{1}{1+\epsilon}\vertiii{\boldsymbol{\xi}^n}_n^2+ \frac{1+\epsilon}{4}\vertiii{\boldsymbol{b}^n}_n^2\|\theta^n\|^2,\quad\text{and}\label{L2errorestimateeqn1_ast2}\\
& C_{\mathcal{I}}|\lambda |\|E\theta^n\|\|\theta^n\|\leq \epsilon\frac{|\lambda|}{2}\|\theta^n\|^2 + C_{\mathcal{I}}^2\frac{|\lambda|}{2\epsilon}\|E\theta^n\|^2.\label{L2errorestimateeqn1_ast3}
\end{align}
Thus, by applying the estimates (\ref{L2errorestimateeqn1_ast1}), (\ref{L2errorestimateeqn1_ast2}), and (\ref{L2errorestimateeqn1_ast3}) in (\ref{L2errorestimateeqn1_ast}), we obtain 
\begin{align}
D_{t_n}^{\alpha}\|\theta^n\|^2   \leq & \left( \frac{1+\epsilon}{2}\vertiii{\boldsymbol{b}^n}_n^2 + 2\|\widetilde{c}\|_{L^\infty(\Omega\times J)} 
 + \epsilon|\lambda | \right)\|\theta^n\|^2  +\epsilon^{-1}|\lambda |C_{\mathcal{I}}^2\|E\theta^n\|^2 \nonumber\\
 &+   2\|\Upsilon^n + r^n + r_f^n -D_{t_n}^{\alpha}\eta^n - \boldsymbol{b}^n \cdot \boldsymbol{B}^n\boldsymbol{\zeta}^n - c^n \eta^n +\lambda \mathcal{I}E\eta^n \|\;\|\theta^n\|+\gamma_0\frac{1+\epsilon}{2\epsilon}\|\zeta^n\|^2 \nonumber\\ 
 \leq & \sum_{i=0}^{n}\widetilde{\lambda}_{n-i}^{n}\|\theta^{i}\|^2 +   2\Big(\|\Upsilon^n\| + \|r^n\| + \|r_f^n\| + \|D_{t_n}^{\alpha}\eta^n\| + \big(\gamma_0  \|\widetilde{b}\|_{L^\infty(\Omega\times J)}\big)^{1/2}\| \boldsymbol{\zeta}^n\| \nonumber\\
 &\hspace{3cm}   +  C_c\|\eta^n\| +|\lambda|C_{\mathcal{I}} \|E\eta^n \|\Big)\;\|\theta^n\| +\gamma_0\frac{1+\epsilon}{2\epsilon}\|\zeta^n\|^2 , \label{L2errorestimateeqn1_ast4}
\end{align}
where 
\begin{align*}
& \widetilde{\lambda}^n_{n-i} := \begin{cases}
 \displaystyle\frac{1+\epsilon}{2}\|\widetilde{b}\|_{L^\infty(\Omega\times J)}  + 2\|\widetilde{c}\|_{L^\infty(\Omega\times J)} 
 + \epsilon|\lambda | &: \;  1\leq n \leq N\;\&\; i=n, \\[4pt]
\epsilon^{-1}|\lambda |C_{\mathcal{I}}^2 &: \; 1\leq n \leq n_\alpha\;\&\; i=n-1, \\ 
0&: \; 1\leq n \leq n_\alpha\;\&\; i=n-2, \\ 
 2\epsilon^{-1}|\lambda |C_{\mathcal{I}}^2(1+\mu_{n_\alpha+1})^2 &: \;  n_{\alpha}+1\leq n \leq N\;\&\; i=n-1, \\
 2\epsilon^{-1}|\lambda |C_{\mathcal{I}}^2\mu_{n_\alpha+1}^2 &: \;  n_{\alpha}+1\leq n \leq N\;\&\; i=n-2, \\
0 &: \;  3\leq n \leq N\;\&\; 0 \leq i \leq n-3, \\
    \end{cases}
    \end{align*}
    and
    \begin{align*}
 \displaystyle{\sum_{i=0}^n \widetilde{\lambda}_{n-i}^n} &= \Big(  \frac{1+\epsilon}{2}\|\widetilde{b}\|_{L^\infty(\Omega\times J)}  + 2\|\widetilde{c}\|_{L^\infty(\Omega\times J)} 
 + \epsilon|\lambda |\Big) +  2\epsilon^{-1}|\lambda |C_{\mathcal{I}}^2\left((1+\mu_{n_\alpha+1})^2+\mu_{n_\alpha+1}^2\right)\\
 &=\; \max_{1 \leq n \leq N} \sum_{i=0}^n \widetilde{\lambda}_{n-i}^n =:\widetilde{\Lambda}^S.
\end{align*}
Finally, an appeal to the discrete fractional Gr\"{o}nwall inequality (Theorem~\ref{DFGI}) in (\ref{L2errorestimateeqn1_ast4}) and non-decreasing property of $E_\alpha(\cdot)$ yields the required estimate (\ref{L2errorestimateeqn1}).
\end{proof}

We now revisit the following essential result required to achieve the optimal error estimate for the flux.
\begin{lemma}(Lemma~4.4, \cite{TOMAR2024137})\label{DiscreteDerivativeDifference}
Let $T^j:= D_{t_j}^{\alpha} (t_j e_u^j) - t_j D_{t_j}^{\alpha} e_u^j$. Then
\begin{align}
 \nonumber \Big(\sum_{j=1}^n P^{n,j}_{\alpha}\|T^j\|^2 \Big)^{1/2} &\leq C\; t_n^{1-\frac{\alpha}{2}}\max_{0\leq j \leq n}\|e_u^j\|,\quad 1\leq n\leq N,
 \end{align}
where the positive constant $C$ remains bounded as $\alpha\to 1^{-}$.
\end{lemma}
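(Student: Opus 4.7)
The plan is to derive this estimate by regarding $T^j$ as the discrete analogue of the continuous identity $\partial_t^{\alpha}(tu)-t\,\partial_t^{\alpha}u=\alpha\!\int_0^t k_{1-\alpha}(t-s)u(s)\,ds+t\,k_{1-\alpha}(t)u(0)$ (already exploited in Theorem~\ref{Regularity_condition}), and then to recover the $t_n^{2-\alpha}$ scaling by exchanging the order of summation and making essential use of the identity $\sum_{j\ge k}P_{\alpha}^{n,j}K_{1-\alpha}^{j,k}=1$ from Lemma~\ref{discretekernelproperties}\ref{itm:b}.

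First, using $D_{t_j}^{\alpha}\phi^j=\sum_{k=1}^{j} K_{1-\alpha}^{j,k}(\phi^k-\phi^{k-1})$ and the Leibniz-type identity $t_k e_u^k - t_{k-1}e_u^{k-1}=t_k(e_u^k-e_u^{k-1})+\Delta t_k\,e_u^{k-1}$, I would split $T^j=A^j+B^j$ with $A^j:=\sum_{k=1}^{j} K_{1-\alpha}^{j,k}\Delta t_k\,e_u^{k-1}$ and $B^j:=-\sum_{k=1}^{j} K_{1-\alpha}^{j,k}(t_j-t_k)(e_u^k-e_u^{k-1})$. A discrete Abel summation by parts applied to $B^j$ and merged with $A^j$ yields the compact form $T^j = K_{1-\alpha}^{j,1}t_j\,e_u^0+\sum_{k=1}^{j-1}\bigl(K_{1-\alpha}^{j,k+1}-K_{1-\alpha}^{j,k}\bigr)(t_j-t_k)\,e_u^k$, in which every coefficient is non-negative (by property~(\ref{Kproperty})) and whose total mass telescopes to $\sum_{k=1}^{j} K_{1-\alpha}^{j,k}\Delta t_k = t_j^{1-\alpha}/\Gamma(2-\alpha)$.

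For $A^j$ I would apply the Cauchy--Schwarz inequality with the weights $K_{1-\alpha}^{j,k}\Delta t_k$ to obtain $\|A^j\|^2\le (t_j^{1-\alpha}/\Gamma(2-\alpha))\sum_{k=1}^{j} K_{1-\alpha}^{j,k}\Delta t_k\|e_u^{k-1}\|^2$, multiply by $P_{\alpha}^{n,j}$, swap the order of summation, and invoke Lemma~\ref{discretekernelproperties}\ref{itm:b} together with $t_j^{1-\alpha}\le t_n^{1-\alpha}$ and $\sum_{k=1}^n \Delta t_k = t_n$ to arrive at $\sum_{j=1}^n P_{\alpha}^{n,j}\|A^j\|^2\le C\,t_n^{2-\alpha}M^2$, where $M:=\max_{0\le k\le n}\|e_u^k\|$. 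The contribution from $B^j$ (or equivalently from the non-$A^j$ piece of the compact representation of $T^j$) would be handled analogously by a weighted Cauchy--Schwarz inequality with the non-negative weights $(K_{1-\alpha}^{j,k+1}-K_{1-\alpha}^{j,k})(t_j-t_k)$, followed by an interchange of summation and the crude bounds $t_j-t_k\le t_n$ and $K_{1-\alpha}^{j,k+1}-K_{1-\alpha}^{j,k}\le K_{1-\alpha}^{j,k+1}$, so that Lemma~\ref{discretekernelproperties}\ref{itm:b} and Lemma~\ref{discretekernelproperties}\ref{itm:c} reduce the resulting double sum to the same order $C\,t_n^{2-\alpha}M^2$. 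Taking square roots then gives the stated estimate.

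The main obstacle is sharpness. The naive pointwise bound $\|T^j\|\le t_j^{1-\alpha}M/\Gamma(2-\alpha)$, which follows immediately from $\|e_u^k\|\le M$ and the coefficient mass computation above, would lead to $\sum_{j=1}^n P_{\alpha}^{n,j}t_j^{2-2\alpha}$, and on the graded mesh~(\ref{gradedmesh1}) this sum exceeds $t_n^{2-\alpha}$ by a factor growing like $n^{1-\alpha}$, destroying both the required rate and the $\alpha$-robustness. The correct $t_n^{2-\alpha}$ scaling is recovered only by postponing the use of $\|e_u^k\|\le M$ until \emph{after} exchanging the order of summation, so that the factor $\Delta t_k$ arising from $K_{1-\alpha}^{j,k}\Delta t_k=\int_{t_{k-1}}^{t_k}k_{1-\alpha}(t_j-s)\,ds$ (or its weighted counterpart in the $B^j$ piece) is summed once to $t_n$ rather than being dispersed pointwise. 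A careful tracking of the prefactor $1/\Gamma(2-\alpha)$ throughout ensures that the resulting constant $C$ stays bounded as $\alpha\to 1^{-}$.
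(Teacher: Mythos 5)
The paper does not actually prove this lemma itself (it is imported from Lemma~4.4 of \cite{TOMAR2024137}), so your argument has to stand on its own. The algebra does: the splitting $T^j=A^j+B^j$, the Abel summation, and the compact representation $T^j=K_{1-\alpha}^{j,1}t_j\,e_u^0+\sum_{k=1}^{j-1}\bigl(K_{1-\alpha}^{j,k+1}-K_{1-\alpha}^{j,k}\bigr)(t_j-t_k)\,e_u^k$ with non-negative coefficients (by (\ref{Kproperty})) of total mass $t_j^{1-\alpha}/\Gamma(2-\alpha)$ all check out, and your treatment of $A^j$ via weighted Cauchy--Schwarz, interchange of summation and Lemma~\ref{discretekernelproperties}\ref{itm:b} is correct. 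However, the step you describe for the $B^j$ piece fails as written: if you discard the telescoping structure by bounding $K_{1-\alpha}^{j,k+1}-K_{1-\alpha}^{j,k}\le K_{1-\alpha}^{j,k+1}$ and $t_j-t_k\le t_n$, then after interchanging the order of summation Lemma~\ref{discretekernelproperties}\ref{itm:b} leaves you with $\sum_{k=2}^{n}\sum_{j=k}^{n}P_\alpha^{n,j}K_{1-\alpha}^{j,k}=n-1$, i.e.\ a factor of $n$ rather than a bounded constant. You must retain the differences and use the telescoped total mass $\sum_{k}\bigl(K_{1-\alpha}^{j,k+1}-K_{1-\alpha}^{j,k}\bigr)(t_j-t_k)\le t_j^{1-\alpha}/\Gamma(2-\alpha)$, which you have already computed.

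More importantly, the ``main obstacle'' you identify is illusory, and removing it collapses the proof to three lines. Since $2-2\alpha\ge 0$, one has $t_j^{2-2\alpha}\le t_n^{2-2\alpha}$, and Lemma~\ref{discretekernelproperties}\ref{itm:c} with $m=1$ gives $\sum_{j=1}^{n}P_\alpha^{n,j}=\sum_{j=1}^{n}P_\alpha^{n,j}k_1(t_j)\le k_{1+\alpha}(t_n)=t_n^{\alpha}/\Gamma(1+\alpha)$. Hence the naive pointwise bound $\|T^j\|\le t_j^{1-\alpha}M/\Gamma(2-\alpha)$ with $M:=\max_{0\le k\le n}\|e_u^k\|$, which follows immediately from the non-negativity and total mass of the coefficients, already yields
\begin{align*}
\sum_{j=1}^{n}P_\alpha^{n,j}\|T^j\|^2\;\le\;\frac{M^2}{\Gamma(2-\alpha)^2}\,t_n^{2-2\alpha}\sum_{j=1}^{n}P_\alpha^{n,j}\;\le\;\frac{M^2}{\Gamma(2-\alpha)^2\,\Gamma(1+\alpha)}\,t_n^{2-\alpha},
\end{align*}
which is exactly the assertion with a constant that stays bounded as $\alpha\to 1^{-}$. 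There is no $n^{1-\alpha}$ blow-up on the graded mesh: that phenomenon arises for \emph{negative} powers of $t_j$ (as in parts \ref{itm:f}--\ref{itm:g} of Lemma~\ref{discretekernelproperties}), not for the non-negative exponent $2-2\alpha$. So no interchange of summation is needed at all, and your more elaborate route, once the $B^j$ step is repaired, only reproduces what the pointwise bound already gives.
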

\begin{lemma}\label{xiestimate_l2norm}
If for some $\epsilon>0$ and $\delta>1$, $\max\limits_{1\leq n \leq N}\Delta t_n\leq \left( \delta\;\lambda^F_\epsilon\Gamma(2-\alpha)\right)^{-1/\alpha}$, then $\boldsymbol{\xi}^{n}$ satisfies
\begin{align}\label{L2xierrorestimateeqn1}
 \|t_n{\boldsymbol{\xi}}^n\|  \leq &\frac{C_\delta}{\sqrt{\beta_0}} E_{\alpha}(C_\delta \Lambda_\infty^F t_n^\alpha )\max_{1\leq j \leq n}\Bigg( \frac{2}{\sqrt{\beta_0}}\sum_{i=1}^j P_{\alpha}^{j,i}\|D_{t_i}^{\alpha}( \boldsymbol{B}^i t_i{\boldsymbol{\zeta}}^i)\| + \Big(\frac{1+\epsilon}{2\epsilon}\Big)^{1/2}\Big(\sum_{i=1}^j P_{\alpha}^{j,i}\Big(\|T^i\|+ \|t_i\Upsilon^i \| \nonumber \\
 &+\|t_i r^i \|+\|t_i r_f^i \| +\|D_{t_i}^{\alpha}(t_i\eta^i)\|+ +C_c\|t_i\theta^i\|  + C_c\|t_i\eta^i\| + C_{\mathcal{I}}|\lambda|\|E(t_i\theta^i)\|+ C_{\mathcal{I}}|\lambda|\|E(t_i\eta^i)\|  \nonumber\\
 &+\big( \gamma_0\|\widetilde{b}\|_{L^\infty(\Omega\times J)}\big)^{1/2} \|t_i\zeta^i\|\Big)^2 \Big)^{\frac{1}{2}}\;\; \Bigg),\quad 1\leq n\leq N,
 \end{align}
where $\lambda_\epsilon^F$ and $\Lambda_{\infty}^F$ are defined in Theorem~\ref{stabilitytheorem2}, the functions $\widetilde{b}$ and $ \widetilde{c}$ are defined in Theorem~\ref{stabilitytheorem1}, and $T^n:= D_{t_n}^{\alpha} (t_n e_u^n) - t_n D_{t_n}^{\alpha} e_u^n$. 
\end{lemma}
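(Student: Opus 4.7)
The plan is to mimic the proof of Theorem~\ref{stabilitytheorem2} applied to the \emph{time-weighted} error equations, so that Theorem~\ref{DFGI} can be invoked directly for the sequence $\{\vertiii{t_n \boldsymbol{\xi}^n}_n\}_{n=0}^{N}$. Because (\ref{theta1}) is valid at every temporal level $t_j$, multiplying through by $t_j$ produces a null identity in $j$ to which the operator $D_{t_n}^\alpha$ can be applied, giving the weighted mixed identity
\begin{equation*}
(D_{t_n}^\alpha(\boldsymbol{B}^n t_n \boldsymbol{\xi}^n), \boldsymbol{w}_h) + (D_{t_n}^\alpha(\boldsymbol{B}^n t_n \boldsymbol{\zeta}^n), \boldsymbol{w}_h) + (D_{t_n}^\alpha(t_n \theta^n), \nabla \cdot \boldsymbol{w}_h) = 0,\quad \forall \boldsymbol{w}_h \in \boldsymbol{W}_h.
\end{equation*}
Equation (\ref{theta2}) is likewise multiplied by $t_n$, and the two discrete-derivative terms are re-expressed via $t_n D_{t_n}^\alpha \phi^n = D_{t_n}^\alpha(t_n \phi^n) - T_\phi^n$ for $\phi = \theta, \eta$, with $T^n = T_\theta^n + T_\eta^n$ controlled by Lemma~\ref{DiscreteDerivativeDifference}, so that the natural unknowns become $D_{t_n}^\alpha(t_n \theta^n)$ and $D_{t_n}^\alpha(t_n \eta^n)$.

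I would then take $\boldsymbol{w}_h = t_n \boldsymbol{\xi}^n$ in the weighted mixed identity and $v_h = D_{t_n}^\alpha(t_n \theta^n)$ in the weighted (\ref{theta2}); adding the two cancels the cross terms $\pm(D_{t_n}^\alpha(t_n \theta^n), \nabla \cdot t_n \boldsymbol{\xi}^n)$ and produces
\begin{equation*}
(D_{t_n}^\alpha(\boldsymbol{B}^n t_n \boldsymbol{\xi}^n), t_n \boldsymbol{\xi}^n) + \|D_{t_n}^\alpha(t_n \theta^n)\|^2 = -(D_{t_n}^\alpha(\boldsymbol{B}^n t_n \boldsymbol{\zeta}^n), t_n \boldsymbol{\xi}^n) + (\Phi^n, D_{t_n}^\alpha(t_n \theta^n)),
\end{equation*}
where $\Phi^n$ collects $T^n$ together with $t_n\Upsilon^n$, $t_n r^n$, $t_n r_f^n$, $D_{t_n}^\alpha(t_n\eta^n)$, $\boldsymbol{b}^n\cdot\boldsymbol{B}^n(t_n\boldsymbol{\xi}^n+t_n\boldsymbol{\zeta}^n)$, $c^n(t_n\theta^n+t_n\eta^n)$, and $\lambda\mathcal{I}(E(t_n\theta^n)+E(t_n\eta^n))$. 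The Lipschitz form of Lemma~\ref{dfdoi} bounds the first left-hand term below by $\tfrac12 D_{t_n}^\alpha\vertiii{t_n\boldsymbol{\xi}^n}_n^2 - \tfrac{L_B t_n^{1-\alpha}}{2\beta_0\Gamma(2-\alpha)}\vertiii{t_n\boldsymbol{\xi}^n}_n^2$.

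On the right-hand side, the $\boldsymbol{\zeta}$-contribution is deliberately kept \emph{linear} via Cauchy--Schwarz and (\ref{equivalentnorms}), giving the estimate $\beta_0^{-1/2}\|D_{t_n}^\alpha(\boldsymbol{B}^n t_n\boldsymbol{\zeta}^n)\|\,\vertiii{t_n\boldsymbol{\xi}^n}_n$; this will feed the $v^n\xi^n$-type slot in Theorem~\ref{DFGI} and produces exactly the summand $(2/\sqrt{\beta_0})\sum_{i=1}^{j} P_\alpha^{j,i}\|D_{t_i}^\alpha(\boldsymbol{B}^i t_i\boldsymbol{\zeta}^i)\|$ in (\ref{L2xierrorestimateeqn1}). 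The remaining inner products with $D_{t_n}^\alpha(t_n\theta^n)$ are split by Young's inequality with the weights $\tfrac{\epsilon}{1+\epsilon}$ and $\tfrac{1+\epsilon}{4\epsilon}$, except for the drift piece $(\boldsymbol{b}^n\cdot\boldsymbol{B}^n t_n\boldsymbol{\xi}^n, D_{t_n}^\alpha(t_n\theta^n))$ where the weights $\tfrac{1}{1+\epsilon}$ and $\tfrac{1+\epsilon}{4}$ are used; the $\|D_{t_n}^\alpha(t_n\theta^n)\|^2$ fragments are precisely absorbed into the $\|D_{t_n}^\alpha(t_n\theta^n)\|^2$ term on the left. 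After doubling, this yields the energy inequality
\begin{equation*}
D_{t_n}^\alpha \vertiii{t_n\boldsymbol{\xi}^n}_n^2 \le \lambda_\epsilon^F\,\vertiii{t_n\boldsymbol{\xi}^n}_n^2 + \vertiii{t_n\boldsymbol{\xi}^n}_n\cdot \tfrac{2}{\sqrt{\beta_0}}\|D_{t_n}^\alpha(\boldsymbol{B}^n t_n\boldsymbol{\zeta}^n)\| + \tfrac{1+\epsilon}{2\epsilon}\|\widetilde\Phi^n\|^2,
\end{equation*}
with $\widetilde\Phi^n$ the remaining forcing terms. An appeal to Theorem~\ref{DFGI} with $v^n=\vertiii{t_n\boldsymbol{\xi}^n}_n$, $v^0=0$, the $\xi^i$-slot filled by $(2/\sqrt{\beta_0})\|D_{t_i}^\alpha(\boldsymbol{B}^i t_i\boldsymbol{\zeta}^i)\|$ and the $\zeta^i$-slot by $((1+\epsilon)/(2\epsilon))^{1/2}\|\widetilde\Phi^i\|$, followed by the lower bound in (\ref{equivalentnorms}) to pass from $\vertiii{\cdot}_n$ to $\|\cdot\|$, produces (\ref{L2xierrorestimateeqn1}).

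The main obstacle is the bookkeeping in the Young-inequality allocation: the coefficients $\tfrac{1}{1+\epsilon}+\tfrac{\epsilon}{1+\epsilon}=1$ must sum to exactly the $\|D_{t_n}^\alpha(t_n\theta^n)\|^2$ available on the left, which forces the squared-side weights $\tfrac{1+\epsilon}{4}$ and $\tfrac{1+\epsilon}{4\epsilon}$ and, after doubling, yields the advertised $\lambda_\epsilon^F$ and $(1+\epsilon)/(2\epsilon)$. A secondary point is that only $E(t_n\theta^n)$, $E(t_n\eta^n)$ appear in the target bound whereas the algebra naturally produces $t_n E\theta^n$, $t_n E\eta^n$; this is reconciled using (\ref{2ndOrderExtrapolation}) to rewrite $t_n E\phi^n$ as $E(t_n\phi^n)$ plus a $\mu_{n_\alpha+1}$-controlled linear combination of $t_{n-1}\phi^{n-1}, t_{n-2}\phi^{n-2}$, which is already dominated by the $\max_j\|E(t_j\phi^j)\|$-type quantities appearing inside the maximum on the right-hand side.
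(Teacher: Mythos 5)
Your proposal is correct and follows essentially the same route as the paper: weight (\ref{theta1}) by $t_n$, apply $D_{t_n}^\alpha$, test with $\boldsymbol{w}_h=t_n\boldsymbol{\xi}^n$ and $v_h=D_{t_n}^\alpha(t_n\theta^n)$, invoke the Lipschitz estimate (\ref{pollutionterm}) of Lemma~\ref{dfdoi}, keep the $\boldsymbol{\zeta}$-term linear for the $\xi^i$-slot of Theorem~\ref{DFGI}, and split the rest by Young's inequality with the same $\epsilon$-weights. Your remark on reconciling $t_nE\phi^n$ with $E(t_n\phi^n)$ is a point the paper glosses over, and your resolution via (\ref{2ndOrderExtrapolation}) is sound.
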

\begin{proof}
Multiply $t_n$ in (\ref{theta1}) and then operate the discrete fractional derivative to obtain
\begin{align}\label{newtheta2}
    (D_{t_n}^{\alpha} \boldsymbol{B}^n (t_n{\boldsymbol{\xi}}^n+t_n{\boldsymbol{\zeta}}^n), \boldsymbol{w}_h) + (D_{t_n}^{\alpha} (t_n\theta^n),  \nabla \cdot \boldsymbol{w}_h)~ &=~0 \quad \forall \boldsymbol{w}_h \in \boldsymbol{W}_h,\;1\leq n\leq N.
\end{align}
Further, after choosing $\boldsymbol{w}_h = t_n\boldsymbol{\xi}^n$ and $v_h = D_{t_n}^{\alpha} (t_n\theta^n
)$ in (\ref{newtheta2})  and (\ref{theta2}), respectively, substitute (\ref{newtheta2}) in (\ref{theta2}) to get the following relation 
\begin{align*}
&(D_{t_n}^{\alpha}(t_n\theta^n
),D_{t_n}^{\alpha} (t_n\theta^n
) ) + (D_{t_n}^{\alpha} \boldsymbol{B}^n t_n{\boldsymbol{\xi}}^n,  t_n\boldsymbol{\xi}^n ) = - (\boldsymbol{b}^n \cdot \boldsymbol{B}^n  t_n \boldsymbol{\xi}^n, D_{t_n}^{\alpha} (t_n\theta^n
) ) - (D_{t_n}^{\alpha} \boldsymbol{B}^n t_n{\boldsymbol{\zeta}}^n, t_n{\boldsymbol{\xi}}^n)\\
&+ \Big( T^n + t_n\Upsilon^n + t_n r^n + t_n r_f^n - D_{t_n}^{\alpha}(t_n\eta^n)     - c^n (  t_n\theta^n + t_n\eta^n) + \lambda \mathcal{I}(E (t_n\theta^n +t_n\eta^n))  - \boldsymbol{b}^n \cdot \boldsymbol{B}^n  t_n \boldsymbol{\zeta}^n , D_{t_n}^{\alpha} (t_n\theta^n
)\Big)  ,
\end{align*}
where $T^n:= D_{t_n}^{\alpha} (t_n e_u^n) - t_n D_{t_n}^{\alpha} e_u^n$. Now, an appeal to the estimate (\ref{pollutionterm}) of Lemma~\ref{dfdoi}, the Cauchy-Schwarz inequality, and the estimates (\ref{intbound}), (\ref{coefbounds}) and (\ref{equivalentnorms}) yields
\begin{align}
\|D_{t_n}^{\alpha}(t_n\theta^n
)\|^2 &+ \frac{1}{2}D_{t_n}^{\alpha}\vertiii{ t_n{\boldsymbol{\xi}}^n}^2_n \; \leq\;  \frac{L_B t_n^{1-\alpha}}{2\beta_0\Gamma(2-\alpha)}\vertiii{t_n\boldsymbol{\xi}^n}^2_{n} + \big(\|\widetilde{b}\|_{L^\infty(\Omega\times J)}\big)^{1/2}\vertiii{ t_n{\boldsymbol{\xi}}^n}_n\|D_{t_n}^{\alpha} (t_n\theta^n
)\|  \nonumber\\
&+\frac{1}{\sqrt{\beta_0}}\| D_{t_n}^{\alpha} (\boldsymbol{B}^n t_n{\boldsymbol{\zeta}}^n)\|\vertiii{ t_n{\boldsymbol{\xi}}^n}_n +\Big(\|T^n\|+\|t_n\Upsilon^n\| + \|t_n r^n\| + \|t_n r_f^n \|+ \| D_{t_n}^{\alpha}(t_n\eta^n)\|+C_c\|t_n\theta^n\|  \nonumber\\
& + C_c\|t_n\eta^n\| + C_{\mathcal{I}}|\lambda|\|E(t_n\theta^n)\|+ C_{\mathcal{I}}|\lambda|\|E(t_n\eta^n)\| + \big( \gamma_0\|\widetilde{b}\|_{L^\infty(\Omega\times J)}\big)^{1/2} \|t_n\zeta^n\|\Big)\|D_{t_n}^{\alpha} (t_n\theta^n
)\|. \label{newtheta3}
\end{align}
Applying Young's inequality in (\ref{newtheta3}) with appropriate $\epsilon>0$, we obtain the following estimate
\begin{align}
D_{t_n}^{\alpha}\vertiii{ t_n{\boldsymbol{\xi}}^n}^2_n \; \leq\;&  \Big(\frac{1+\epsilon}{2}\|\widetilde{b}\|_{L^\infty(\Omega\times J)} + \frac{L_B t_n^{1-\alpha}}{\beta_0\Gamma(2-\alpha)}  \Big)
 \vertiii{t_n\boldsymbol{\xi}^n}^2_{n} + \frac{2}{\sqrt{\beta_0}}\| D_{t_n}^{\alpha} (\boldsymbol{B}^n t_n{\boldsymbol{\zeta}}^n)\|\vertiii{ t_n{\boldsymbol{\xi}}^n}_n  \nonumber\\
&+\frac{1+\epsilon}{2\epsilon}\Big(\|T^n\|+\|t_n\Upsilon^n\| + \|t_n r^n\| + \|t_n r_f^n \|+ \| D_{t_n}^{\alpha}(t_n\eta^n)\|+C_c\|t_n\theta^n\|  \nonumber\\
& + C_c\|t_n\eta^n\| + C_{\mathcal{I}}|\lambda|\|E(t_n\theta^n)\|+ C_{\mathcal{I}}|\lambda|\|E(t_n\eta^n)\| + \big( \gamma_0\|\widetilde{b}\|_{L^\infty(\Omega\times J)}\big)^{1/2} \|t_n\zeta^n\|\Big)^2. \label{newtheta4}
\end{align}
Under the condition $\max\limits_{1\leq n \leq N}\Delta t_n\leq \left( \delta\;\lambda^F_\epsilon\Gamma(2-\alpha)\right)^{-1/\alpha}$,  the discrete fractional Gr\"{o}nwall inequality (Theorem \ref{DFGI}) can be applied to (\ref{newtheta4}) to get the following estimate
\begin{align}
 \vertiii{ t_n{\boldsymbol{\xi}}^n}_n  \leq & C_\delta E_{\alpha}(C_\delta \Lambda_n^F t_n^\alpha )\max_{1\leq j \leq n}\Bigg( \frac{2}{\sqrt{\beta_0}}\sum_{i=1}^j P_{\alpha}^{j,i}\|D_{t_i}^{\alpha}( \boldsymbol{B}^i t_i{\boldsymbol{\zeta}}^i)\| + \Big(\frac{1+\epsilon}{2\epsilon}\Big)^{1/2}\Big(\sum_{i=1}^j P_{\alpha}^{j,i}\Big(\|T^i\|+ \|t_i\Upsilon^i \| \nonumber \\
 &+\|t_i r^i \|+\|t_i r_f^i \| +\|D_{t_i}^{\alpha}(t_i\eta^i)\|+ +C_c\|t_i\theta^i\|  + C_c\|t_i\eta^i\| + C_{\mathcal{I}}|\lambda|\|E(t_i\theta^i)\|+ C_{\mathcal{I}}|\lambda|\|E(t_i\eta^i)\|  \nonumber\\
 &+\big( \gamma_0\|\widetilde{b}\|_{L^\infty(\Omega\times J)}\big)^{1/2} \|t_i\zeta^i\|\Big)^2 \Big)^{\frac{1}{2}}\;\; \Bigg),\quad 1\leq n\leq N.\nonumber
 \end{align}
Finally, by applying  (\ref{equivalentnorms}), we obtain the desired estimate (\ref{L2xierrorestimateeqn1}).
\end{proof}

Now, we state the following truncation error estimate.
\begin{lemma}\label{truncationerror} 
If the grading parameter $\gamma$ satisfies $1\;\leq\;\gamma\;\leq\;\frac{2(2-\alpha)}{\alpha} $ then under the assumptions in Theorem \ref{Regularity_condition}, there hold
\begin{enumerate}\label{truncationI}
    \item[\namedlabel{trunk:i}{(i)}] $\sum_{j=1}^nP_{\alpha}^{n,j}\|r^j\| + t_n^{\alpha/2}\left(\sum_{j=1}^nP_{\alpha}^{n,j}\|r^j\|^2 \right)^{{1}/{2}}  \leq C \;t_n^{\alpha}\;N^{-\min\{\gamma\alpha,\; 2 \} }, \quad 1\leq n \leq N,$
    \item[\namedlabel{trunk:ii}{(ii)}] $\sum_{j=1}^nP_{\alpha}^{n,j}\|r_f^j\| + t_n^{\alpha/2\left(\sum_{j=1}^nP_{\alpha}^{n,j}\|r^j_f\|^2\right)^{{1}/{2}}}  \leq C \;t_n^{\alpha}\;N^{-\min\{\gamma\alpha, \;2 \} }, \quad 1\leq n \leq N,$
    \item[\namedlabel{trunk:iii}{(iii)}] $\sum_{j=1}^nP_{\alpha}^{n,j}\|\Upsilon^j\|  \leq C \log_e(n+2)\;N^{-\min\{\gamma\alpha,\; 2-\alpha \} }, \quad 1\leq n \leq N,$ and
    \item[\namedlabel{trunk:iv}{(iv)}] $\left(\sum_{j=1}^n P_{\alpha}^{n,j}\|t_j\Upsilon^j\|^2\right)^{{1}/{2}}  \leq C\;t_n^{1-\alpha/2} N^{-\min\{\gamma\alpha, \;2-\alpha \} }, \quad 1\leq n \leq N,$
\end{enumerate}
where the positive constant $C$ remains bounded as $\alpha\to 1^{-}$. Moreover, if $\frac{(2-\alpha)}{\alpha}\;<\;\gamma\;\leq\;\frac{2(2-\alpha)}{\alpha} $ then \ref{trunk:iii} holds without the $\log_e(n+2)$ factor.
\end{lemma}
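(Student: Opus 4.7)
The plan is to derive pointwise bounds on $\|r^j\|$, $\|r_f^j\|$, and $\|\Upsilon^j\|$ for each $j$, and then sum them against the complementary discrete kernel $P^{n,j}_\alpha$ using the properties collected in Lemma~\ref{discretekernelproperties}. Because the extrapolation operator $E$ defined in (\ref{2ndOrderExtrapolation}) behaves as a first-order shift for $j\leq n_\alpha$ and as a linear extrapolant for $j > n_\alpha$, the pointwise analysis splits into these two regimes. The graded-mesh relations $\Delta t_j \leq \gamma T j^{\gamma-1}N^{-\gamma}$ and $t_{j-1}\geq c\,t_j$ for $j\geq 2$, combined with the regularity bounds $\|\partial_t u(t)\|\leq Ct^{\alpha-1}$ and $\|\partial_t^2 u(t)\|\leq Ct^{\alpha-2}$ from Theorem~\ref{Regularity_condition}, then supply all the ingredients needed to control integrals of $s^{\alpha-1}$ and $s^{\alpha-2}$ against the Abel kernel $k_{1-\alpha}$.

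For \ref{trunk:i} and \ref{trunk:ii}, in the regime $j\leq n_\alpha$ the fundamental theorem of calculus gives $\|u(t_j)-Eu(t_j)\| \leq \int_{t_{j-1}}^{t_j}\|\partial_s u(s)\|\,ds \leq C(t_j^\alpha-t_{j-1}^\alpha)$, which by (\ref{intbound}) transfers to a bound on $r^j$; in the regime $j\geq n_\alpha+1$, Taylor expansion about $t_{j-1}$ yields
\[
u(t_j)-Eu(t_j) \;=\; \int_{t_{j-1}}^{t_j}(t_j-s)\partial_s^2 u(s)\,ds \;+\; \mu_j\!\int_{t_{j-2}}^{t_{j-1}}(t_{j-2}-s)\partial_s^2 u(s)\,ds,
\]
and $\|\partial_s^2 u(s)\|\leq Cs^{\alpha-2}$ together with $t_{j-2}\geq ct_j$ gives $\|r^j\|\leq C|\lambda|\Delta t_j^2\, t_j^{\alpha-2}$. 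Summing via Lemma~\ref{discretekernelproperties}\ref{itm:a} (i.e.\ $P^{n,j}_\alpha\leq \Gamma(2-\alpha)\Delta t_j^\alpha$) and balancing the initial-layer and interior contributions through the standard graded-mesh calculus produces the rate $t_n^\alpha N^{-\min(\gamma\alpha,2)}$; the weighted $\ell^2$-variant follows by Cauchy--Schwarz together with $\sum_j P^{n,j}_\alpha \leq Ct_n^\alpha$, which comes from Lemma~\ref{discretekernelproperties}\ref{itm:c} with $m=1$. The bound on $r_f^j$ is identical, being governed by the hypothesized regularity of $f$.

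For \ref{trunk:iii} and \ref{trunk:iv}, the analysis of $\Upsilon^j = D^\alpha_{t_j}u(t_j)-\partial^\alpha_t u(t_j)$ is the classical L1 consistency estimate on a graded mesh. Writing
\[
\Upsilon^j \;=\; \sum_{k=1}^{j}\int_{t_{k-1}}^{t_k}k_{1-\alpha}(t_j-s)\Bigl[\partial_s u(s) - \tfrac{u(t_k)-u(t_{k-1})}{\Delta t_k}\Bigr]\,ds,
\]
I would estimate the $k=1$ piece directly using $\|\partial_s u\|\leq Cs^{\alpha-1}$ (which produces the $N^{-\gamma\alpha}$ initial-layer rate), and for $k\geq 2$ bound the bracket by $O(\Delta t_k\sup_{[t_{k-1},t_k]}\|\partial_s^2 u\|)$ times $(s-t_{k-1})$ and integrate against $k_{1-\alpha}(t_j-s)$, producing the $N^{-(2-\alpha)}$ interior rate. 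This yields the sharp pointwise bound $\|\Upsilon^j\|\leq Ct_j^{-\alpha}N^{-\min(\gamma\alpha,2-\alpha)}$. Summing against $P^{n,j}_\alpha$ reduces \ref{trunk:iii} to controlling $\sum_j P^{n,j}_\alpha t_j^{-\alpha}$, which by Lemma~\ref{discretekernelproperties}\ref{itm:g} is bounded by $C\log_e(n+2)$; when $\gamma>(2-\alpha)/\alpha$ an improved pointwise bound with an extra power of $t_j$ becomes available, and applying Lemma~\ref{discretekernelproperties}\ref{itm:f} with a suitable $\beta\in(0,1)$ absorbs the logarithm. Part \ref{trunk:iv} uses the factor $t_j$ to cancel the $t_j^{-\alpha}$ singularity and then applies $\sum_j P^{n,j}_\alpha\leq Ct_n^\alpha$ after Cauchy--Schwarz to deliver the $t_n^{1-\alpha/2}$ prefactor without any logarithmic penalty.

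The principal obstacle will be to maintain $\alpha$-robustness throughout: the kernel $k_{1-\alpha}$ and the factors $\Gamma(2-\alpha)$, $\Gamma(1-\alpha)$ interact delicately as $\alpha\to 1^-$, and careless Cauchy--Schwarz steps easily introduce $\Gamma(1-\alpha)^{-1}$-type quantities that spoil the uniform bound. Working consistently with $K^{j,j}_{1-\alpha}=1/(\Delta t_j^\alpha\Gamma(2-\alpha))$ and exploiting $t_{j-1}\geq ct_j$ to bound $(t_j-s)^{-\alpha}$ on the intervals $[t_{k-1},t_k]$ with $k<j$ are the key technical moves. The weighted estimate in \ref{trunk:iv}, where the extra factor $t_j$ must be carried through both the inner truncation bound and the outer $P^{n,j}_\alpha$-weighted sum, is where the balancing of regularity exponents, Abel-kernel singularities, and graded-mesh powers requires the most careful bookkeeping.
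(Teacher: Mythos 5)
Your overall strategy coincides with the paper's: pointwise bounds on $\|r^j\|$, $\|r_f^j\|$, $\|\Upsilon^j\|$ followed by summation against $P^{n,j}_\alpha$ via Lemma~\ref{discretekernelproperties} (parts \ref{itm:c}, \ref{itm:f}, \ref{itm:g}), with the logarithm in \ref{trunk:iii} coming from $\sum_j P^{n,j}_\alpha t_j^{-\alpha}\lesssim\log_e(n+2)$ and removed for $\gamma>(2-\alpha)/\alpha$ by trading a power $t_j^{\beta}$, $\beta=\alpha-(2-\alpha)/\gamma\in(0,1)$, into Lemma~\ref{discretekernelproperties}\ref{itm:f}. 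The only substantive stylistic difference is in \ref{trunk:iii}: you re-derive the L1 consistency bound from the local interpolation errors, whereas the paper simply imports the pointwise estimate $\|\Upsilon^n\|\leq Cn^{-\min\{\gamma\alpha,2-\alpha\}}$ from Stynes et al.\ and converts it to $Ct_n^{-\alpha}N^{-\min\{\gamma\alpha,2-\alpha\}}$; both routes land on the same inequality, and your self-contained derivation is the standard one.

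There is, however, one step in your plan that fails as written. For the extrapolation error in the regime $j\geq n_\alpha+1$ you bound the term $\mu_j\int_{t_{j-2}}^{t_{j-1}}|s-t_{j-2}|\,\|\partial_s^2u(s)\|\,ds$ using $\|\partial_s^2u(s)\|\leq Cs^{\alpha-2}$ together with ``$t_{j-2}\geq c\,t_j$'' to conclude $\|r^j\|\leq C|\lambda|\,\Delta t_j^2\,t_j^{\alpha-2}$. When $n_\alpha=1$ (i.e.\ $\alpha>1/2$) the first index in this regime is $j=2$, for which $t_{j-2}=t_0=0$, so $t_{j-2}\geq c\,t_j$ is false and $t_{j-2}^{\alpha-2}$ is meaningless; the integral $\int_{0}^{t_1}s\cdot s^{\alpha-2}\,ds=t_1^{\alpha}/\alpha$ is finite but is not controlled by $\Delta t_2^2\,t_0^{\alpha-2}$. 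This is exactly why the paper isolates $n=2$ and estimates that contribution via $\mu_2\int_0^{t_2}t^{\alpha-1}\,dt\leq Ct_2^\alpha/\alpha\leq CN^{-\gamma\alpha}$, which happens to be of the same order $N^{-\gamma\alpha}$ as the generic interior bound, so the stated rate survives. Your argument needs this one-index patch; the remaining steps (including the uniform-in-$j$ bound $\|r^j\|\leq CN^{-\min\{\gamma\alpha,2\}}$ followed by $\sum_jP^{n,j}_\alpha\leq Ct_n^\alpha$, which also disposes of the weighted $\ell^2$ variants without any actual Cauchy--Schwarz) are sound and match the paper.
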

\begin{proof}
Apply the regularity result Theorem~\ref{Regularity_condition} in 
\begin{align*}
    &u(t_n) - Eu(t_n) = \begin{cases}
        \displaystyle \int_0^{t_1} \partial_t u(t)\;dt & {:} \;n=1,\\[4pt]
        \displaystyle \mu_n\int_{t_{n-2}}^{t_n} (t-t_{n-2})\partial_t^2 u(t)\;dt - (1+\mu_n)\int_{t_{n-1}}^{t_n} (t-t_{n-1})\partial_t^2 u(t)\;dt &{:}\;n\geq 2,
    \end{cases}
\end{align*}
to obtain 
\begin{align}
    \label{extrapolationerr1} \|u(t_1) - Eu(t_1)\|\;& \leq\; C\int_0^{t_1}\;t^{\alpha -1}\;dt\;\leq\; \widetilde{C}_1 \;N^{-\gamma\alpha},\quad \widetilde{C}_1:=CT^{\alpha}/\alpha,\\
\label{extrapolationerr2} \|u(t_2) - Eu(t_2)\|\;& \leq\; C \left(\mu_2\int_0^{t_2} t^{\alpha-1}\;dt + (1+\mu_2)\int_{t_1}^{t_2}(t-t_1)\;t^{\alpha-2}\;dt\right),\\
\label{extrapolationerr3} \|u(t_n) - Eu(t_n)\|\;& \leq\; C \left(\mu_n\int_{t_{n-2}}^{t_n} (t-t_{n-2})t^{\alpha-2}\;dt + (1+\mu_n)\int_{t_{n-1}}^{t_n}(t-t_{n-1})\;t^{\alpha-2}\;dt\right),\;3\leq n \leq N,
\end{align}
and then use $\mu_2 = 2^{\gamma}-1$ and $\Delta t_2\leq \gamma 2^{\gamma-1}T\;N^{-\gamma}$ in the estimate (\ref{extrapolationerr2}) to get the following estimate
\begin{align}
\label{extrapolationerr4} \|u(t_2) - Eu(t_2)\|\; & \leq\; 2^\gamma\;C \left(\frac{t_2^{\alpha}}{\alpha} + t_1^{\alpha-2}\Delta t_2^2\right)\;\leq \;\widetilde{C}_2\;N^{-\gamma\alpha},\quad \widetilde{C}_2:=C\;2^{\gamma}\;T^{\alpha}\max(2^{\gamma\alpha}/\alpha,\;\gamma^2\;2^{2\gamma-2}).
\end{align}
Now, an appeal to $\max_{3\leq n \leq N}\mu_n\leq 3^{\gamma-1}$, $\Delta t_{n-1}\leq \Delta t_n \leq \gamma T n^{\gamma-1} N^{-\gamma}$, and $\max(t_n/t_{n-2},\;t_{n-1}/t_{n-2})\leq 3^\gamma$, $3\leq n \leq N,$ in the estimate (\ref{extrapolationerr3}) yields 
\begin{align}
    \nonumber \|u(t_n) - Eu(t_n)\|\; & \leq \; C 3^{\gamma} \left((\Delta t_{n-1}+\Delta t_n)^2 t_{n-2}^{\alpha-2} + \Delta t_n^2 t_{n-1}^{\alpha-2} \right)\\
    \nonumber &\leq \; 5C 3^{2\gamma} \;\Delta t_n^2 t_{n}^{\alpha-2}\\
    \nonumber &\leq \; 5C 3^{2\gamma} \gamma^2 T^2 \;n^{\gamma\alpha-2}N^{-\gamma\alpha}\\
    \label{extrapolationerr5} &\leq\; \widetilde{C}_3\;N^{-\min\{\gamma\alpha,\;2\}},\quad \widetilde{C}_3:= 5C 3^{2\gamma} \gamma^2 T^2,\;3\leq n \leq N.
\end{align}
Thus, by combining the above estimates (\ref{extrapolationerr1}), (\ref{extrapolationerr4}), (\ref{extrapolationerr5}), and the estimate (\ref{intbound}) with $j=0$, we obtain 
\begin{align}
    \nonumber& \| r^n \|\;\leq\; \lambda C \;\|u(t_n) - Eu(t_n)\|\;\leq\;\lambda C \max\{\widetilde{C}_1, \widetilde{C}_2, \widetilde{C}_3 \}\; N^{-\min\{\gamma\alpha, \;2\}},
\end{align}
and hence the estimate \ref{trunk:i} follows after applying the Lemma~\ref{discretekernelproperties}\ref{itm:c} with $j=1.$ Estimate \ref{trunk:ii} can be established by a similar procedure. To establish \ref{trunk:iii}, recall the following estimate from Stynes et al.  \cite{MR3639581} (see, Lemma~5.2)
\begin{align}\label{trunc1}
    \|\Upsilon^n\| \;&\leq\; C\; n^{-\min\{\gamma \alpha,\; 2-\alpha\}} \;=\; CT^{\min\{\alpha,\;\frac{2-\alpha}{\gamma} \}}\; t_n^{-\min\{\alpha,\;\frac{2-\alpha}{\gamma} \}}\;  N^{-\min\{\gamma\alpha ,\;2-\alpha\}},
    \end{align}
 where the positive constant $C$ remains bounded as $\alpha \rightarrow 1^-$, and then apply $t_n^{-\min\{\alpha,\;\frac{2-\alpha}{\gamma} \}}\;\leq\;\max\{1,\;T^\alpha\}t_n^{-\alpha}$ and Lemma~\ref{discretekernelproperties}\ref{itm:g}. Estimate \ref{trunk:iv} follows after applying Lemma~\ref{discretekernelproperties}\ref{itm:c} with $j=1$ to the following estimate obtained from (\ref{trunc1})
 \begin{align*}
      \|t_n\Upsilon^n\|^2 \;&\leq\; C^2 T^{2\min\{\alpha,\;\frac{2-\alpha}{\gamma} \}}\; t_n^{2-2\min\{\alpha,\;\frac{2-\alpha}{\gamma} \}}\;  N^{-2\min\{\gamma\alpha ,\;2-\alpha\}}\;\leq\; C^2 T^{2\min\{\alpha,\;\frac{2-\alpha}{\gamma} \}}\; t_n^{2-2\alpha}\;  N^{-2\min\{\gamma\alpha ,\;2-\alpha\}}. 
 \end{align*}
Now, if $\frac{2-\alpha}{\alpha} < \gamma \leq \frac{2(2-\alpha)}{\alpha}$, the estimate (\ref{trunc1}) can be written as
\begin{align*}
    \|\Upsilon^n\| \;&\leq\;  CT^{\frac{2-\alpha}{\gamma}}\; t_n^{(\alpha - \frac{2-\alpha}{\gamma})-\alpha}\;  N^{-\min\{\gamma\alpha ,\;2-\alpha\}},\quad \left(\alpha - \frac{2-\alpha}{\gamma}\right) \in (0,\;1),
\end{align*}
 and thus an appeal to the Lemma~\ref{discretekernelproperties}\ref{itm:f} yields
 \begin{align}
     \nonumber\sum_{j=1}^nP_{\alpha}^{n,j}\|\Upsilon^j\| \;& \leq\; CT^{\frac{2-\alpha}{\gamma}}\; \frac{\Gamma(1-\frac{2-\alpha}{\gamma})}{\Gamma(1+\alpha - \frac{2-\alpha}{\gamma})}\;t_n^{(\alpha - \frac{2-\alpha}{\gamma})}\;  N^{-\min\{\gamma\alpha ,\;2-\alpha\}}\\
     \label{extrapolationerr6}&\leq\;2CT^{\frac{2-\alpha}{\gamma}}\; \Gamma(1-\frac{2-\alpha}{\gamma})\;t_n^{(\alpha - \frac{2-\alpha}{\gamma})}\;  N^{-\min\{\gamma\alpha ,\;2-\alpha\}}.
\end{align}
Finally, as $\lim\limits_{\alpha\to 1^{-}} (2-\alpha)/\gamma\;<\;1$ for all $\gamma:\;(2-\alpha)/\alpha \;<\;\gamma\;\leq\;2(2-\alpha)/\alpha$, above estimate (\ref{extrapolationerr6}) establishes \ref{trunk:iii} without any logarithmic factor $\log_e(n+2)$. 

\end{proof}
 
\begin{remark}\label{remarkontruncationerror}
For any fixed $\alpha \in (0, 1)$, the estimates (\ref{trunc1}), $t_n^{-\min\{\alpha,\;\frac{2-\alpha}{\gamma} \}}\;\leq\;\max\{1,\;T^\alpha\}t_n^{-\alpha}$, and Lemma~\ref{discretekernelproperties}\ref{itm:c} with $j=0$ guarantee that the estimate \ref{trunk:iii} holds without the presence of any logarithmic factor $\log_e(n+2)$ for all $\gamma:\;1\leq \gamma\leq 2(2-\alpha/\alpha)$.
\end{remark}

\begin{lemma}\label{etaestimate}
Under the assumptions in Theorem~\ref{Regularity_condition}, the following estimate holds
\begin{enumerate}
\item[\namedlabel{etaestimate:i}{(i)}] $\sum_{j=1}^nP_{\alpha}^{n,j}\|D^{\alpha}_{t_j} \eta^j\| \leq C\;\left(h^{2}\log_e(n+2) + \sum_{j=1}^{n}P^{n,j}_{\alpha}\|\Upsilon^j\| \right), \quad 1\leq n \leq N$,
\item[\namedlabel{etaestimate:ii}{(ii)}] $\sum_{j=1}^nP_{\alpha}^{n,j}\| \boldsymbol{\zeta}^j\|  \leq C\;h^2 \;t_n^{{\alpha}/{2}},  \quad \text{and}\quad  \left(\sum_{j=1}^nP_{\alpha}^{n,j}\| \boldsymbol{\zeta}^j\|^2 \right)^{{1}/{2}} \leq C\;h^2\log_e(n+2), \quad 1\leq n \leq N,$
\item[\namedlabel{etaestimate:iii}{(iii)}] $ \left(\sum_{j=1}^nP_{\alpha}^{n,j}\|D^{\alpha}_{t_j} t_j\eta^j\|^2 \right)^{{1}/{2}}  \leq C \;h^2 \;t_n^{1-\frac{\alpha}{2}}, \quad 1\leq n \leq N,$ and
\item[\namedlabel{etaestimate:iv}{(iv)}] $\sum_{j=1}^nP_{\alpha}^{n,j}\|D_{t_j}^{\alpha} (\boldsymbol{B}^j t_j{\boldsymbol{\zeta}}^j)\| \leq C\;h^2\;t_n^{1-\frac{\alpha}{2}} , \quad 1\leq n \leq N,$
\end{enumerate}
where the positive constant $C$ remains bounded as $\alpha\to 1^{-}$. Moreover, if $u_0\in\dot{H}^{2+\epsilon}(\Omega)$ for some $\epsilon >0$, then \ref{etaestimate:i} and \ref{etaestimate:ii} hold without any $\log_e(n+2)$ factor.
\end{lemma}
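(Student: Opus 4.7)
The plan is to combine pointwise approximation-plus-regularity bounds for $\eta^j$ and $\boldsymbol{\zeta}^j$ with the summation inequalities for the complementary kernels $P_\alpha^{n,j}$ from Lemma~\ref{discretekernelproperties}, while systematically exploiting the commutativity of the time-independent spatial projections $P_h$ and $\Pi_h$ with $D^\alpha_{t_j}$ and with multiplication by $t_j$. Starting with (ii), the approximation property (\ref{approxprop}) of $\Pi_h$ and Theorem~\ref{Regularity_condition} ($k=0,\,m=1$) give $\|\boldsymbol{\zeta}^j\|\leq Ch^2\|\nabla\cdot(\boldsymbol{A}(t_j)\nabla u(t_j))\|_1\leq Ch^2 t_j^{-\alpha/2}$. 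The first inequality in (ii) then follows from Lemma~\ref{discretekernelproperties}\ref{itm:f} with $\beta=\alpha/2$, and the second from Lemma~\ref{discretekernelproperties}\ref{itm:g} after absorbing $\sqrt{\log_e(n+2)}$ into $\log_e(n+2)$.

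For (i), the time-independence of $P_h$ yields $D^\alpha_{t_j}\eta^j=(I-P_h)D^\alpha_{t_j}u^j=(I-P_h)\partial^\alpha_t u(t_j)+(I-P_h)\Upsilon^j$. Combining $L^2$-stability of $P_h$, (\ref{approxprop}), and the bound $\|\partial^\alpha_t u(t_j)\|_2\leq Ct_j^{-\alpha}$ from Theorem~\ref{Regularity_condition}, one obtains $\|D^\alpha_{t_j}\eta^j\|\leq Ch^2 t_j^{-\alpha}+\|\Upsilon^j\|$. Summing with $P_\alpha^{n,j}$ and using Lemma~\ref{discretekernelproperties}\ref{itm:g} produces the $h^2\log_e(n+2)$ term. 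Under $u_0\in\dot H^{2+\epsilon}(\Omega)$, Theorem~\ref{Regularity_condition1} upgrades the pointwise bound to $\|\partial^\alpha_t u(t_j)\|_2\leq Ct_j^{\beta-\alpha}$ with $\beta=\epsilon\alpha/2\in(0,1)$, so Lemma~\ref{discretekernelproperties}\ref{itm:f} replaces \ref{itm:g} and the logarithm disappears.

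Part (iii) uses the cleaner commutation
\begin{align*}
D^\alpha_{t_j}(t_j\eta^j)\;=\;(I-P_h)D^\alpha_{t_j}(t_j u(t_j))\;=\;\sum_{k=1}^{j}K^{j,k}_{1-\alpha}\int_{t_{k-1}}^{t_k}(I-P_h)\bigl[u(s)+s\,\partial_s u(s)\bigr]\,ds.
\end{align*}
The decisive ingredient is that Theorem~\ref{Regularity_condition} provides \emph{both} $\|u(s)\|_2\leq C$ and $s\|\partial_s u(s)\|_2\leq C$, so the integrand is bounded in $L^2$-norm by $Ch^2$ uniformly in $s$. Hence $\|D^\alpha_{t_j}(t_j\eta^j)\|\leq Ch^2\int_0^{t_j}k_{1-\alpha}(t_j-s)\,ds=Ch^2 t_j^{1-\alpha}/\Gamma(2-\alpha)$. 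Squaring, multiplying by $P_\alpha^{n,j}$, using $t_j^{2-2\alpha}\leq t_n^{2-2\alpha}$ together with $\sum_j P_\alpha^{n,j}\leq t_n^\alpha/\Gamma(1+\alpha)$ from Lemma~\ref{discretekernelproperties}\ref{itm:c} (with $m=1$) yields the required $Ch^2 t_n^{1-\alpha/2}$.

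Part (iv) is the main obstacle, since the time-dependent factor $\boldsymbol{B}^j$ prevents a direct commutation with $\Pi_h$. I would apply the discrete product rule
\begin{align*}
D^\alpha_{t_j}(\boldsymbol{B}^j t_j\boldsymbol{\zeta}^j)\;=\;\sum_k K^{j,k}_{1-\alpha}\boldsymbol{B}^k\bigl(t_k\boldsymbol{\zeta}^k-t_{k-1}\boldsymbol{\zeta}^{k-1}\bigr)\;+\;\sum_k K^{j,k}_{1-\alpha}\bigl(\boldsymbol{B}^k-\boldsymbol{B}^{k-1}\bigr)t_{k-1}\boldsymbol{\zeta}^{k-1}.
\end{align*}
For the first sum, write $t_k\boldsymbol{\zeta}^k-t_{k-1}\boldsymbol{\zeta}^{k-1}=\int_{t_{k-1}}^{t_k}(I-\Pi_h)[\boldsymbol{\sigma}(s)+s\partial_s\boldsymbol{\sigma}(s)]\,ds$ and invoke Theorem~\ref{Regularity_condition} with $(k,m)=(0,1)$ and $(1,1)$ to bound the integrand by $Ch^2 s^{-\alpha/2}$. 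For the second, combine the Lipschitz bound (\ref{Blips}) with the pointwise estimate $\|\boldsymbol{\zeta}^{k-1}\|\leq Ch^2 t_{k-1}^{-\alpha/2}$ already obtained. Both contributions reduce to $Ch^2\sum_k K^{j,k}_{1-\alpha}\int_{t_{k-1}}^{t_k}s^{-\alpha/2}\,ds=\frac{Ch^2}{1-\alpha/2}\,D^\alpha_{t_j}(t^{1-\alpha/2})$, which by the standard L1-consistency on the power function $t^{1-\alpha/2}$ (whose Caputo derivative is proportional to $t^{1-3\alpha/2}$) is bounded by $Ch^2 t_j^{1-3\alpha/2}$. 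Writing $1-3\alpha/2=\beta-\alpha$ with $\beta:=1-\alpha/2\in(0,1)$ and applying Lemma~\ref{discretekernelproperties}\ref{itm:f} yields $\sum_j P_\alpha^{n,j}t_j^{1-3\alpha/2}\leq Ct_n^{1-\alpha/2}$, finishing (iv). The chief technical hurdle is the clean derivation of $D^\alpha_{t_j}(t^{1-\alpha/2})\leq Ct_j^{1-3\alpha/2}$ from the defining sum, which is essentially a comparison between the L1 kernel and the continuous Caputo kernel acting on a sub-unit power.
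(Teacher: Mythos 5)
Your parts \ref{etaestimate:i}--\ref{etaestimate:iii} coincide with the paper's argument: the commutation of the time-independent projections with $D^\alpha_{t_j}$, the pointwise bounds $\|\partial_t^\alpha u(t_j)\|_2\leq Ct_j^{-\alpha}$ and $\|\nabla\cdot\boldsymbol{A}\nabla u\|_1\leq Ct^{-\alpha/2}$ from Theorem~\ref{Regularity_condition}, and the kernel sums from Lemma~\ref{discretekernelproperties}\ref{itm:f} and \ref{itm:g} are exactly what the paper uses, as is the $\int\|\partial_s(s\eta)\|\,ds\leq Ch^2\Delta t_j$ telescoping in \ref{etaestimate:iii}. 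For part \ref{etaestimate:iv}, however, you take a genuinely different and technically heavier route. The paper never estimates $\|D^\alpha_{t_j}(\boldsymbol{B}^jt_j\boldsymbol{\zeta}^j)\|$ pointwise in $j$: it bounds it by $\sum_{i}K^{j,i}_{1-\alpha}\|\boldsymbol{B}^it_i\boldsymbol{\zeta}^i-\boldsymbol{B}^{i-1}t_{i-1}\boldsymbol{\zeta}^{i-1}\|$, exchanges the order of summation, and invokes the identity $\sum_{j=i}^{n}P^{n,j}_\alpha K^{j,i}_{1-\alpha}=1$ of Lemma~\ref{discretekernelproperties}\ref{itm:b}, which collapses the weighted double sum to the single telescoping sum $\sum_{i=1}^{n}\|\boldsymbol{B}^it_i\boldsymbol{\zeta}^i-\boldsymbol{B}^{i-1}t_{i-1}\boldsymbol{\zeta}^{i-1}\|\leq Ch^2\big(t_n^{1-\alpha/2}+\int_0^{t_n}s^{-\alpha/2}ds\big)$, and no discrete fractional derivative of a power function is ever needed. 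Your route instead requires the auxiliary estimate $D^\alpha_{t_j}(t^{1-\alpha/2})\leq Ct_j^{1-3\alpha/2}$ on the graded mesh, which you correctly flag as the chief hurdle but do not prove; it is true (the L1 quadrature of a concave increasing power is comparable, up to $\gamma$- and $\alpha$-dependent constants that stay bounded as $\alpha\to 1^-$, to the exact Caputo derivative), yet it is a nontrivial lemma in its own right, so as written your proof of \ref{etaestimate:iv} is not self-contained where the paper's is elementary. Two smaller points: your splitting $D^\alpha_{t_j}(\boldsymbol{B}^jt_j\boldsymbol{\zeta}^j)=\sum_kK^{j,k}_{1-\alpha}\boldsymbol{B}^k(t_k\boldsymbol{\zeta}^k-t_{k-1}\boldsymbol{\zeta}^{k-1})+\sum_kK^{j,k}_{1-\alpha}(\boldsymbol{B}^k-\boldsymbol{B}^{k-1})t_{k-1}\boldsymbol{\zeta}^{k-1}$ and its two bounds are fine; and in the improved-regularity case you only discuss removing the logarithm from \ref{etaestimate:i}, whereas the statement also requires it for the second bound in \ref{etaestimate:ii} — that follows the same way from $\|\nabla\cdot\boldsymbol{A}\nabla u\|_1\leq Ct^{(\epsilon-1)\alpha/2}$ of Theorem~\ref{Regularity_condition1} together with Lemma~\ref{discretekernelproperties}\ref{itm:f}, so you should say so explicitly.
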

\begin{proof}
An appeal to the triangle inequality, $L^2$-norm stability of $L^2$-projection, and the approximation property (\ref{approxprop}) yields
\begin{align}
    \nonumber \sum_{j=1}^{n}P^{n,j}_{\alpha}\|D_{t_j}^{\alpha} \eta^j\| & \leq \sum_{j=1}^{n}P^{n,j}_{\alpha}\left(\|\Upsilon^j\| + \|\partial_t^{\alpha} u(t_j) - P_h \partial_t^{\alpha} u(t_j)\| + \|P_h\Upsilon^j\|\right) \\
    \label{alpharobitaest} &\leq C  \sum_{j=1}^{n}P^{n,j}_{\alpha}\|\Upsilon^j\|  +  Ch^2\sum_{j=1}^{n}P^{n,j}_{\alpha}\|\partial_t^{\alpha} u(t_j) \|_2.
\end{align}
Apply the regularity results of Theorem~\ref{Regularity_condition} and Lemma~\ref{discretekernelproperties}\ref{itm:g} in (\ref{alpharobitaest}) to get the estimate \ref{etaestimate:i}. Now, apply the approximation property (\ref{approxprop}) to obtain the following two estimates 

\begin{align}
\label{alpharobzitaest}
\sum_{j=1}^nP_{\alpha}^{n,j}\| \boldsymbol{\zeta}^j\| &\leq C h^2 \sum_{j=1}^nP_{\alpha}^{n,j}\|\nabla\cdot\sigma(t_j)\|_1,\\
\label{alpharobzitaest_ast}\sum_{j=1}^n P_{\alpha}^{n,j}\| \boldsymbol{\zeta}^j\|^2 &\leq C h^4 \sum_{j=1}^n P_{\alpha}^{n,j}\|\nabla\cdot\sigma(t_j)\|_1^2.
\end{align}
Both estimates in \ref{etaestimate:ii} follow by applying the regularity result from Theorem~\ref{Regularity_condition}, along with Lemma~\ref{discretekernelproperties}\ref{itm:f} and Lemma~\ref{discretekernelproperties}\ref{itm:g} to (\ref{alpharobzitaest}) and (\ref{alpharobzitaest_ast}). We obtain the result \ref{etaestimate:iii}, by applying the approximation property (\ref{approxprop}) and regularity result Theorem~\ref{Regularity_condition} as follows
\begin{align*}
    \|D_{t_n}^{\alpha} (t_n \eta^n)\|& \leq \sum_{j=1}^n K^{n,j}_{1-\alpha}\|t_j\eta^j - t_{j-1}\eta^{j-1}\|\leq  \sum_{j=1}^n K^{n,j}_{1-\alpha}\int_{t_{j-1}}^{t_j}\|\partial_s(s\eta)\|ds\\
     &\leq C\;h^2 \sum_{j=1}^n K^{n,j}_{1-\alpha}\Delta t_j \leq   C\;h^2 t_n^{1-\alpha}
\end{align*}
and then by using Lemma~\ref{discretekernelproperties}\ref{itm:c} with $j=1$. Now, an application of Lemma~\ref{discretekernelproperties}\ref{itm:b} yields
\begin{align*} \sum_{j=1}^{n}P^{n,j}_{\alpha}\|D_{t_j}^{\alpha} (\boldsymbol{B}^jt_j\boldsymbol{\zeta}^j)\| & \leq \sum_{j=1}^{n}P^{n,j}_{\alpha}\sum_{i=1}^{j}K^{j,i}_{1-\alpha}\|\boldsymbol{B}^i t_i\boldsymbol{\zeta}^i - \boldsymbol{B}^{i-1}t_{i-1}\boldsymbol{\zeta}^{i-1}\|  \\ &=\sum_{i=1}^{n}\left(\sum_{j=i}^{n}P^{n,j}_{\alpha}K^{j,i}_{1-\alpha}\right)\|\boldsymbol{B}^i t_i\boldsymbol{\zeta}^i - \boldsymbol{B}^{i-1}t_{i-1}\boldsymbol{\zeta}^{i-1}\| \\
 &\leq \sum_{i=1}^{n}(\|(\boldsymbol{B}^i - \boldsymbol{B}^{i-1})t_i\boldsymbol{\zeta}^{i}\| + \|\boldsymbol{B}^{i-1}(t_i\boldsymbol{\zeta}^i - t_{i-1}\boldsymbol{\zeta}^{i-1})\| ). 
 \end{align*}
 Using the Lipschitz continuity (\ref{Blips}) of $\boldsymbol{B}$, equivalent norms (\ref{equivalentnorms}), and the regularity result Theorem~\ref{Regularity_condition}, we obtain
 \begin{align*}
    \sum_{j=1}^{n}P^{n,j}_{\alpha}\|D_{t_j}^{\alpha} (\boldsymbol{B}^j t_j\boldsymbol{\zeta}^j)\| \;& \leq \sum_{i=1}^{n} \left (L_B \Delta t_i \|t_i\boldsymbol{\zeta}^i\|  + \sqrt{\gamma_0}\|t_i\boldsymbol{\zeta}^i - t_{i-1}\boldsymbol{\zeta}^{i-1}\| \right)\\
    &\leq  \sum_{i=1}^{n} \left (L_B \Delta t_i \|t_i\boldsymbol{\zeta}^i\| \right)  + \sqrt{\gamma_0} \sum_{i=1}^{n}\int_{t_{i-1}}^{t_i}\|\partial_s (s\boldsymbol{\zeta})\|ds\\
   &\leq C\;\left(h^2\;t_n^{1-\frac{\alpha}{2}}+h^2\int_{0}^{t_n} s^{-\frac{\alpha}{2}}ds \right) ,
\end{align*}
and hence, the result \ref{etaestimate:iv} follows. Moreover, if $u_0\in\dot{H}^{2+\epsilon}(\Omega)$ for some $\epsilon >0$, then an application of the regularity result Theorem~\ref{Regularity_condition1} in estimates (\ref{alpharobitaest}) and (\ref{alpharobzitaest}) implies
\begin{align*}
    \sum_{j=1}^{n}P^{n,j}_{\alpha}\|D_{t_j}^{\alpha} \eta^j\| & \leq  C  \sum_{j=1}^{n}P^{n,j}_{\alpha}\|\Upsilon^j\|  +  Ch^2\sum_{j=1}^{n}P^{n,j}_{\alpha} t_j^{\epsilon\alpha/2 - \alpha},\;\quad \text{and}\\
    \sum_{j=1}^nP_{\alpha}^{n,j}\| \boldsymbol{\zeta}^j\|^2 &\leq C h^4 \sum_{j=1}^nP_{\alpha}^{n,j} t_j^{\epsilon\alpha - \alpha},
\end{align*}
respectively. Finally, an appeal to the Lemma~\ref{discretekernelproperties}\ref{itm:f} ensures the validity of the estimates \ref{etaestimate:i} and \ref{etaestimate:ii} without any logarithmic factor $\log_e(n+2)$. 
\end{proof}
\begin{remark}\label{remarkonetaandzitaestimates}
For any fixed $\alpha \in (0, 1)$, assuming the regularity conditions in Theorem~\ref{Regularity_condition}, the estimates (\ref{alpharobitaest}), (\ref{alpharobzitaest}), and Lemma~\ref{discretekernelproperties}\ref{itm:c} with $j=0$ guarantee that the estimates \ref{etaestimate:i} and \ref{etaestimate:ii} hold without the presence of any logarithmic factor $\log_e(n+2)$, even if $u_0 \notin \dot{H}^{2+\epsilon}$, $\epsilon >0$.
\end{remark}
\begin{lemma}\label{VTlemma}
For $d=1, 2,$ let $\boldsymbol{w}\in L^2(\Omega;\mathbb{R}^d)$ and $v_h\in V_h$ be such that
\begin{align}
   \nonumber&(\boldsymbol{w},\boldsymbol{w}_h)+(v_h,\nabla\cdot \boldsymbol{w}_h) = 0,\quad \forall \boldsymbol{w}_h\in\boldsymbol{W}_h.
\end{align}
Then there exists a constant $C>0$, such that,
\begin{align}
    \nonumber & \|v_h\|_{L^\infty(\Omega)} \leq C \ell_{h,d}\|\boldsymbol{w}\|,
\end{align}
where $\displaystyle \ell_{h,d}:=\begin{cases}
    1 & {:}\; d=1,\\
    1+|\log{h}| & {:}\; d=2.
\end{cases}$
\end{lemma}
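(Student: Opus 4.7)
The plan is to use a standard duality argument based on a regularized Dirac delta and the dual Poisson problem. Fix any $x_0\in\Omega$ and let $T_{x_0}\in\mathcal{T}_h$ denote the element containing $x_0$. I would construct a regularized delta $\delta_h^{x_0}\in V_h$, supported in $T_{x_0}$, such that $(v_h,\delta_h^{x_0})=v_h(x_0)$ for every $v_h\in V_h$, with the standard size bounds $\|\delta_h^{x_0}\|_{L^1(\Omega)}\leq C$ and $\|\delta_h^{x_0}\|_{L^2(\Omega)}\leq C h^{-d/2}$. Such $\delta_h^{x_0}$ is the $L^2$-Riesz representer of the point-evaluation functional on the finite-dimensional space $V_h|_{T_{x_0}}$, extended by zero to all of $\Omega$. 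Then taking the supremum over $x_0\in\Omega$ (say, where $|v_h|$ attains its maximum, using that $v_h$ is piecewise polynomial) reduces the claim to a pointwise estimate.

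Next I introduce the auxiliary Dirichlet problem $-\Delta\tilde\psi=\delta_h^{x_0}$ in $\Omega$, $\tilde\psi=0$ on $\partial\Omega$, and set $\boldsymbol{\phi}:=-\nabla\tilde\psi\in H(\mathrm{div};\Omega)$, so that $\nabla\cdot\boldsymbol{\phi}=\delta_h^{x_0}\in V_h$. Invoking the Fortin-projection property $\nabla\cdot\Pi_h=P_h(\nabla\cdot)$, one has $\nabla\cdot(\Pi_h\boldsymbol{\phi})=P_h\delta_h^{x_0}=\delta_h^{x_0}$. Testing the hypothesis against $\boldsymbol{w}_h=\Pi_h\boldsymbol{\phi}\in\boldsymbol{W}_h$ gives
\[
v_h(x_0) \;=\; (v_h,\delta_h^{x_0}) \;=\; (v_h,\nabla\cdot\Pi_h\boldsymbol{\phi}) \;=\; -(\boldsymbol{w},\Pi_h\boldsymbol{\phi}),
\]
and by Cauchy--Schwarz together with the approximation bound $\|\boldsymbol{\phi}-\Pi_h\boldsymbol{\phi}\|\leq Ch\|\nabla\cdot\boldsymbol{\phi}\|\leq Ch^{1-d/2}$ (bounded for both $d=1,2$), one arrives at $|v_h(x_0)|\leq\|\boldsymbol{w}\|\bigl(\|\nabla\tilde\psi\|+C\bigr)$.

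The remaining and central task is to bound $\|\nabla\tilde\psi\|$. Integration by parts yields
\[
\|\nabla\tilde\psi\|^2 \;=\; (\tilde\psi,\delta_h^{x_0}) \;\leq\; \|\tilde\psi\|_{L^\infty(T_{x_0})}\,\|\delta_h^{x_0}\|_{L^1(\Omega)} \;\leq\; C\|\tilde\psi\|_{L^\infty(T_{x_0})},
\]
after which I would use the Green's-function representation $\tilde\psi(y)=\int_\Omega G(y,z)\,\delta_h^{x_0}(z)\,dz$. For $d=1$ the Dirichlet Green's function is uniformly bounded, giving $\|\tilde\psi\|_{L^\infty}\leq C$ and hence $\|\nabla\tilde\psi\|\leq C$. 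For $d=2$ the standard estimate $|G(y,z)|\leq C(1+|\log|y-z||)$, combined with the fact that the integrand is concentrated in $T_{x_0}$ of diameter $\leq h$, yields $\|\tilde\psi\|_{L^\infty(T_{x_0})}\leq C(1+|\log h|)$, and therefore $\|\nabla\tilde\psi\|\leq C(1+|\log h|)^{1/2}\leq C\,\ell_{h,2}$, completing the argument.

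The main obstacle is the pointwise control of the dual solution $\tilde\psi$ in the two-dimensional case: since $\Omega$ is only a convex polygon and not smooth, one cannot directly appeal to smooth-domain Green's-function bounds off the shelf. I would therefore either invoke the known logarithmic $L^\infty$-bound of the Dirichlet Green's function on convex polygonal domains, or argue more directly by writing $\tilde\psi$ as the sum of the whole-space fundamental solution convolved with $\delta_h^{x_0}$ (which produces the logarithmic singularity of size $|\log h|$) plus a harmonic corrector that is uniformly bounded on $T_{x_0}$ via the maximum principle.
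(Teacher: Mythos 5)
Your argument is correct, but it is a genuinely different route from the paper's: the paper disposes of this lemma in one line by citing Lemma~1.2 of Johnson and Thom\'ee (whose underlying proof is an $L^q$-duality argument through the Poisson problem, with a $W^{2,q}$-regularity constant growing like $q$ and the choice $q\sim|\log h|$ producing the logarithm), whereas you give a self-contained proof via a discrete delta function and a Green's-function estimate. Your chain $v_h(x_0)=(v_h,\nabla\cdot\Pi_h\boldsymbol{\phi})=-(\boldsymbol{w},\Pi_h\boldsymbol{\phi})$ is exactly the right use of the Fortin commuting property together with $\delta_h^{x_0}\in V_h$ (which holds because $V_h$ is discontinuous), and the energy identity $\|\nabla\tilde\psi\|^2=(\tilde\psi,\delta_h^{x_0})\le\|\tilde\psi\|_{L^\infty(T_{x_0})}\|\delta_h^{x_0}\|_{L^1}$ combined with $|G(y,z)|\le C(1+|\log|y-z||)$ correctly yields $\|\nabla\tilde\psi\|\le C(1+|\log h|)^{1/2}$ in two dimensions; in fact this gives the slightly sharper factor $\ell_{h,2}^{1/2}$, which of course implies the stated bound. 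Two small points deserve attention: your construction of $\delta_h^{x_0}$ and the replacement of $h_{T_{x_0}}$ by $h$ use shape regularity and quasi-uniformity, which is consistent with the hypotheses under which the lemma is applied in Corollary~5.1 but is not stated in the lemma itself; and on the convex polygon the two-sided logarithmic Green's-function bound follows from $G\ge0$ together with the maximum-principle upper bound $H(y,\cdot)\le\frac{1}{2\pi}\log(\operatorname{diam}\Omega)$ for the harmonic corrector (the corrector alone is not uniformly bounded below near $\partial\Omega$, so the positivity of $G$ is the ingredient that closes this step). What your approach buys is an elementary, citation-free proof with an explicit and slightly sharper logarithmic exponent; what the paper's approach buys is brevity and a statement that extends verbatim to the one-dimensional case by inspecting the $q$-dependence of the constants in the cited proof.
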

\begin{proof}
    For $d=2$, see Lemma~1.2 in \cite{MR0610597}. In the case of $d=1$, the constant $C_q$ appeared in the proof of the Lemma~1.2 in \cite{MR0610597} is independent of $q$, and hence the result follows.
\end{proof}
Finally, optimal error estimates are established in the following Theorem.
\begin{theorem}\label{L2H1errortheorem}
Let the pair $(u_h^n, \boldsymbol{\sigma}_h^n)$, satisfying (\ref{fullydiscrete1})-(\ref{fullydiscrete2}), be the approximation of the solution pair $(u(t_{n}),\boldsymbol{\sigma}(t_n))$, which satisfies (\ref{variation1})-(\ref{variation2}) at the temporal grid $t_{n}$. If the time partition $\{t_n\}_{n=0}^{N}$ of $[0,T]$ is given by (\ref{gradedmesh1}), such that for some $\epsilon>0$ and $\delta>1$, $\max\limits_{1\leq n \leq N}\Delta t_n \leq \min\Big\{\left( \delta\;\lambda^S_\epsilon\Gamma(2-\alpha)\right)^{-1/\alpha}, \left( \delta\;\lambda^F_\epsilon\Gamma(2-\alpha)\right)^{-1/\alpha}\Big\}$, then, under the assumptions of Theorem~\ref{Regularity_condition} and $1\;\leq\;\gamma\;\leq\;\frac{2(2-\alpha)}{\alpha} $, the following error estimate holds:
\begin{align}
\label{finalestimateforuI} \max_{1\leq n \leq N} \left(\|u_h^n - u(t_{n}) \| +t_n^{\alpha/2}\|\boldsymbol{\sigma}_h^n - \boldsymbol{\sigma}(t_{n}) \| \right)& \leq 
C\;\log_e(N)\; (h^{2} + N^{ -\min\{\gamma\alpha,2-\alpha\} }),\;N>2,
\end{align}
where $\lambda_\epsilon^S$ and $\lambda_\epsilon^F$ are defined in Lemma~\ref{thetaestimate_l2norm} and Theorem~\ref{stabilitytheorem2}, respectively. Moreover, if $u_0\in\dot{H}^{2+\epsilon}(\Omega)$ for some $\epsilon>0$, and $\frac{2-\alpha}{\alpha}\;<\;\gamma\;\leq\;\frac{2(2-\alpha)}{\alpha} $, then the estimate (\ref{finalestimateforuI}) is valid without the logarithmic factor $\log_e(N)$. The positive constant $C$ in (\ref{finalestimateforuI}) remains bounded as $\alpha\to 1^{-}$. 
\end{theorem}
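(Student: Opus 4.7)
The plan is to split the errors via the projections, bound the projection parts directly, and bound the remaining parts $\theta^n$ and $\boldsymbol{\xi}^n$ by assembling the ingredients from Lemmas~\ref{thetaestimate_l2norm}, \ref{xiestimate_l2norm}, \ref{truncationerror}, \ref{etaestimate}. By the triangle inequality,
\begin{align*}
\|u_h^n-u(t_n)\|\leq\|\theta^n\|+\|\eta^n\|,\qquad \|\boldsymbol{\sigma}_h^n-\boldsymbol{\sigma}(t_n)\|\leq\|\boldsymbol{\xi}^n\|+\|\boldsymbol{\zeta}^n\|,
\end{align*}
and the approximation property (\ref{approxprop}) combined with the regularity bounds in Theorem~\ref{Regularity_condition} immediately yields $\|\eta^n\|\leq Ch^2$ and $\|\boldsymbol{\zeta}^n\|\leq Ch^2\|\nabla\cdot\boldsymbol{\sigma}(t_n)\|_1\leq Ch^2 t_n^{-\alpha/2}$. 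Multiplying the flux projection error by $t_n^{\alpha/2}$ yields $t_n^{\alpha/2}\|\boldsymbol{\zeta}^n\|\leq Ch^2$, which is an acceptable contribution to (\ref{finalestimateforuI}). Since $u_h^0=P_hu_0$, we have $\theta^0=0$, so the initial term in the bound of Lemma~\ref{thetaestimate_l2norm} vanishes.

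Next, I apply Lemma~\ref{thetaestimate_l2norm} to bound $\|\theta^n\|$. The Mittag--Leffler prefactor $E_\alpha(C_\delta\widetilde{\Lambda}_\infty^S t_n^\alpha)$ is uniformly bounded on $[0,T]$ (with a bound that stays finite as $\alpha\to 1^-$). Each term inside the maximum is estimated as follows: the truncation contributions $\sum P_\alpha^{j,i}\|\Upsilon^i\|$, $\sum P_\alpha^{j,i}\|r^i\|$, $\sum P_\alpha^{j,i}\|r_f^i\|$ are controlled by Lemma~\ref{truncationerror}\ref{trunk:i}--\ref{trunk:iii}, producing $C(t_n^\alpha + \log_e(n+2))N^{-\min\{\gamma\alpha,2-\alpha\}}$; the projection-error sums $\sum P_\alpha^{j,i}\|D_{t_i}^\alpha\eta^i\|$, $\sum P_\alpha^{j,i}\|\eta^i\|$, $\sum P_\alpha^{j,i}\|E\eta^i\|$, $\sum P_\alpha^{j,i}\|\boldsymbol{\zeta}^i\|$ and $(\sum P_\alpha^{j,i}\|\boldsymbol{\zeta}^i\|^2)^{1/2}$ are controlled by Lemma~\ref{etaestimate}\ref{etaestimate:i}--\ref{etaestimate:ii}, producing $Ch^2\log_e(n+2)$. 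Collecting these gives
\begin{align*}
\max_{1\leq n\leq N}\|\theta^n\|\leq C\log_e(N)\bigl(h^2+N^{-\min\{\gamma\alpha,2-\alpha\}}\bigr),
\end{align*}
and hence the same bound for $\max_n\|u_h^n-u(t_n)\|$.

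For the flux part, I apply Lemma~\ref{xiestimate_l2norm} to estimate $\|t_n\boldsymbol{\xi}^n\|$, and then divide by $t_n^{1-\alpha/2}$ to obtain $t_n^{\alpha/2}\|\boldsymbol{\xi}^n\|$. Inside the right-hand side, the term $\sum P_\alpha^{j,i}\|D_{t_i}^\alpha(\boldsymbol{B}^i t_i\boldsymbol{\zeta}^i)\|$ is bounded by Lemma~\ref{etaestimate}\ref{etaestimate:iv} by $Ch^2 t_n^{1-\alpha/2}$; the square-root sum involving $\|T^i\|^2$ is handled by Lemma~\ref{DiscreteDerivativeDifference} together with the $\|u_h^n-u(t_n)\|$ bound just derived, giving $Ct_n^{1-\alpha/2}\log_e(N)(h^2+N^{-\min\{\gamma\alpha,2-\alpha\}})$; the $\|t_i\Upsilon^i\|^2$ sum is handled by Lemma~\ref{truncationerror}\ref{trunk:iv}, the terms involving $t_ir^i$ and $t_ir_f^i$ by Lemma~\ref{truncationerror}\ref{trunk:i}--\ref{trunk:ii} after absorbing a factor $T$; the $\|D_{t_i}^\alpha(t_i\eta^i)\|^2$ sum by Lemma~\ref{etaestimate}\ref{etaestimate:iii}; and the remaining $\|t_i\theta^i\|, \|t_i\eta^i\|, \|t_i\boldsymbol{\zeta}^i\|$ terms are dominated by the previous bounds together with $t_i\leq T$. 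Dividing by $t_n^{1-\alpha/2}$ yields the desired estimate for $t_n^{\alpha/2}\|\boldsymbol{\xi}^n\|$.

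The principal obstacle is the careful bookkeeping of the $t_n$-weights and $\log_e(n+2)$ factors so that everything collapses to a clean bound of the form $C\log_e(N)(h^2+N^{-\min\{\gamma\alpha,2-\alpha\}})$ with $C$ independent of $h,N$ and bounded as $\alpha\to 1^-$; the two chief places where this is delicate are (i) the use of Lemma~\ref{DiscreteDerivativeDifference}, which forces the flux analysis to reuse the already-established solution error, and (ii) the fact that the Mittag--Leffler prefactors from both Lemma~\ref{thetaestimate_l2norm} and Lemma~\ref{xiestimate_l2norm} must be uniformly bounded under the stated time-step restriction. Finally, for the sharpened statement, if $u_0\in\dot H^{2+\epsilon}(\Omega)$ and $\frac{2-\alpha}{\alpha}<\gamma\leq\frac{2(2-\alpha)}{\alpha}$, then Lemma~\ref{truncationerror}\ref{trunk:iii} holds without the logarithmic factor (as noted there) and Lemma~\ref{etaestimate}\ref{etaestimate:i}--\ref{etaestimate:ii} lose their $\log_e(n+2)$ factors (by the improved regularity of Theorem~\ref{Regularity_condition1}); repeating the assembly above then yields (\ref{finalestimateforuI}) without the $\log_e(N)$ factor, completing the proof.
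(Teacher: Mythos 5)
Your proposal is correct and follows essentially the same route as the paper: bound the projection errors $\eta^n,\boldsymbol{\zeta}^n$ via (\ref{approxprop}) and Theorem~\ref{Regularity_condition}, feed Lemmas~\ref{truncationerror} and~\ref{etaestimate} into Lemma~\ref{thetaestimate_l2norm} for $\|\theta^n\|$, then feed those together with Lemma~\ref{DiscreteDerivativeDifference} into Lemma~\ref{xiestimate_l2norm} for $t_n^{\alpha/2}\|\boldsymbol{\xi}^n\|$, and conclude by the triangle inequality, with the log-free variant following from the improved cases of Lemmas~\ref{truncationerror} and~\ref{etaestimate}. The weight bookkeeping you flag (e.g.\ using $t_i\le t_j$ rather than $t_i\le T$ so that the sums come out as $O(t_j^{1-\alpha/2})$ before dividing) is exactly the step the paper also leaves implicit, and it goes through.
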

\begin{proof}
Under the assumptions of Theorem~\ref{Regularity_condition}, an appeal to the estimate (\ref{approxprop}) yields
\begin{align}\label{zetaestimate}
    \|\eta^n\| + t_n^{\alpha/2}\|\boldsymbol{\zeta}^n\| \leq C h^2, \quad 1 \leq n \leq N.
\end{align}
Now, apply the above estimate (\ref{zetaestimate}), Lemma~\ref{truncationI}, and Lemma~\ref{etaestimate} in Lemma~\ref{thetaestimate_l2norm} to obtain the following estimate 
\begin{align}\label{thetaestimate}
   \|\theta^n\| \leq C\;\log_e(n+2)\;(h^2 + N^{-\min\{\gamma\alpha, 2-\alpha\}}), \quad 1 \leq n \leq N, 
\end{align}
and then apply the estimate (\ref{zetaestimate}), (\ref{thetaestimate}), Lemma~\ref{DiscreteDerivativeDifference}, Lemma~\ref{truncationI} and Lemma~\ref{etaestimate} in Lemma~\ref{xiestimate_l2norm} to get 
\begin{align}
    \label{xiestimate}  t_n^{\alpha/2}\|\boldsymbol{\xi}^n\| \leq C\;\log_e(n+2)\;(h^2 + N^{-\min\{\gamma\alpha, 2-\alpha\}}), \quad 1 \leq n \leq N.
\end{align}
Thus, an application of the triangle inequality and the above three estimates (\ref{zetaestimate}), (\ref{thetaestimate}) and (\ref{xiestimate}) yields the estimate (\ref{finalestimateforuI}). Finally, if $u_0\in\dot{H}^{2+\epsilon}(\Omega)$ for some $\epsilon >0$, and $\frac{(2-\alpha)}{\alpha}\;<\;\gamma\;\leq\;\frac{2(2-\alpha)}{\alpha} $ then an appeal to Lemma~\ref{truncationI}, and Lemma~\ref{etaestimate} guarantee the validity of the estimate (\ref{finalestimateforuI}) without any logarithmic factor $\log_e(N)$.
\end{proof}

\begin{remark}\label{remarkonmaintheorem}
For any fixed $\alpha \in (0, 1)$, assuming the regularity conditions in Theorem~\ref{Regularity_condition}, Remark~\ref{remarkontruncationerror} and Remark~\ref{remarkonetaandzitaestimates} guarantee that the estimate (\ref{finalestimateforuI}) holds without the presence of any logarithmic factor $\log_e(N)$, even if $u_0 \notin \dot{H}^{2+\epsilon}$ for $\epsilon >0$ and $1\leq \gamma \leq 2(2-\alpha)/\alpha$.
\end{remark}

As a consequence of the estimate (\ref{finalestimateforuI}) in the previous Theorem~\ref{L2H1errortheorem}, in one and two dimensions, we obtain the following $L^\infty$-norm estimate.
\begin{corollary}\label{Linftynormestimate}
Under the assumptions in Theorem~\ref{L2H1errortheorem}, for $d=1,2$, and quasi-uniform triangulation $\mathcal{T}_h$, there exists a positive constant $C$ independent of $h$, $N$ and $p$, such that,
\begin{align*}
    & \|u_h^n - u(t_{n}) \|_{L^\infty(\Omega)}  \leq C \left( h^{2-\frac{d}{p}}\|u(t_n)\|_{W^{2,p}(\Omega)} + \ell_{h,d}\;\|\boldsymbol{\sigma}_h^n - \boldsymbol{\sigma}(t_{n}) \|\right),\;1\leq n\leq N, \; p\geq d,\;N>2,
\end{align*}
where $\|\boldsymbol{\sigma}_h^n - \boldsymbol{\sigma}(t_{n}) \|$ is given by the estimate (\ref{finalestimateforuI}) and the constant $C$ remains bounded as $\alpha\to 1^{-}$.
\end{corollary}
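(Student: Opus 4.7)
The plan is to split the error via the $L^2$-projection $P_h$. Writing $u_h^n - u(t_n) = -(\eta^n + \theta^n)$ with $\eta^n = u(t_n) - P_h u(t_n)$ and $\theta^n = P_h u(t_n) - u_h^n \in V_h$, the triangle inequality yields
\begin{align*}
\|u_h^n - u(t_n)\|_{L^\infty(\Omega)} \;\leq\; \|\eta^n\|_{L^\infty(\Omega)} + \|\theta^n\|_{L^\infty(\Omega)},
\end{align*}
so it suffices to bound each piece separately and then invoke the flux estimate (\ref{finalestimateforuI}) from Theorem~\ref{L2H1errortheorem}.

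For the projection part $\|\eta^n\|_{L^\infty(\Omega)}$, I would invoke a standard $L^\infty$-approximation result for the $L^2$-projection on quasi-uniform meshes. Introducing a nodal interpolant $I_h u(t_n) \in V_h$, the Sobolev embedding $W^{2,p}(\Omega) \hookrightarrow L^\infty(\Omega)$ for $p \geq d$ gives $\|u(t_n) - I_h u(t_n)\|_{L^\infty} \leq C h^{2-d/p}\|u(t_n)\|_{W^{2,p}}$, and the inverse inequality together with the $L^p$-stability of $P_h$ on quasi-uniform triangulations produces
\begin{align*}
\|P_h u(t_n) - I_h u(t_n)\|_{L^\infty(\Omega)} \;\leq\; C h^{-d/p}\|P_h u(t_n) - I_h u(t_n)\|_{L^p(\Omega)} \;\leq\; C h^{2-d/p}\|u(t_n)\|_{W^{2,p}(\Omega)},
\end{align*}
which, by the triangle inequality, gives the desired bound on $\|\eta^n\|_{L^\infty(\Omega)}$.

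The estimate for $\|\theta^n\|_{L^\infty(\Omega)}$ is where Lemma~\ref{VTlemma} comes into play. Noting that $\boldsymbol{\xi}^n + \boldsymbol{\zeta}^n = e_{\boldsymbol{\sigma}}^n = \boldsymbol{\sigma}(t_n) - \boldsymbol{\sigma}_h^n$, the first error equation (\ref{theta1}) can be rewritten as
\begin{align*}
(\boldsymbol{B}^n e_{\boldsymbol{\sigma}}^n, \boldsymbol{w}_h) + (\theta^n, \nabla \cdot \boldsymbol{w}_h) \;=\; 0 \quad \forall\,\boldsymbol{w}_h \in \boldsymbol{W}_h,
\end{align*}
which is exactly the hypothesis of Lemma~\ref{VTlemma} with $\boldsymbol{w} = \boldsymbol{B}^n e_{\boldsymbol{\sigma}}^n$ and $v_h = \theta^n$. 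The lemma gives $\|\theta^n\|_{L^\infty(\Omega)} \leq C\,\ell_{h,d}\,\|\boldsymbol{B}^n e_{\boldsymbol{\sigma}}^n\|$, and the uniform pointwise bound $\|\boldsymbol{B}^n \boldsymbol{v}\| \leq \gamma_0 \|\boldsymbol{v}\|$ deduced from (\ref{supdc1}) then yields $\|\theta^n\|_{L^\infty(\Omega)} \leq C\,\gamma_0\,\ell_{h,d}\,\|\boldsymbol{\sigma}_h^n - \boldsymbol{\sigma}(t_n)\|$.

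I do not foresee a substantive obstacle, since every ingredient has been prepared earlier in the paper. The only points requiring attention are that the constant in the $L^\infty$-bound for the $L^2$-projection error is independent of $p$ (which is the standard conclusion of the quasi-uniform analysis above) and that the overall constant stays bounded as $\alpha \to 1^{-}$; the latter is automatic because the only $\alpha$-dependent contribution enters through the flux estimate (\ref{finalestimateforuI}), which is already $\alpha$-robust by Theorem~\ref{L2H1errortheorem}.
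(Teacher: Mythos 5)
Your proposal is correct and follows essentially the same route as the paper: apply Lemma~\ref{VTlemma} to (\ref{theta1}) (with $\boldsymbol{w}=\boldsymbol{B}^n e_{\boldsymbol{\sigma}}^n$, $v_h=\theta^n$) to control $\|\theta^n\|_{L^\infty(\Omega)}$ by $\ell_{h,d}\|\boldsymbol{\sigma}_h^n-\boldsymbol{\sigma}(t_n)\|$, then combine with the triangle inequality and the standard $L^\infty$-bound $\|P_h\phi-\phi\|_{L^\infty(\Omega)}\leq Ch^{2-d/p}\|\phi\|_{W^{2,p}(\Omega)}$. The only difference is that you sketch a proof of that projection estimate (interpolant plus inverse inequality and $L^p$-stability), whereas the paper simply cites it as a known result.
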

\begin{proof}
By applying the Lemma~\ref{VTlemma} in relation (\ref{theta1}), we obtain
\begin{align}
   \nonumber& \|P_h u(t_n) - u_h^n\|_{L^\infty(\Omega)} \leq C \ell_{h,d}\;\|\boldsymbol{\sigma}_h^n - \boldsymbol{\sigma}(t_{n}) \|.
\end{align}
Thus, the result follows after applying triangle inequality and the following standard estimate for the $L^2$-projection $P_h: L^2(\Omega)\to V_h$ (see, \cite{MR2249024}),
\begin{align}
   \nonumber& \|P_h \phi - \phi\|_{L^\infty(\Omega)} \leq C h^{2-\frac{d}{p}}\|\phi\|_{W^{2,p}(\Omega)}, \quad  \phi\in H_0^1(\Omega)\cap W^{2,p}(\Omega),\;p \geq d.
\end{align}
This completes the rest of the proof.
\end{proof}
\begin{remark}
If $\boldsymbol{A}$ is time-independent, $\boldsymbol{b}=\boldsymbol{0}$, and $c\geq 0$, then $\widetilde{b}:=\boldsymbol{b}^T\boldsymbol{A}^{-1}\boldsymbol{b} =0$, $\widetilde{c}:=\max(0,-c)=0$, and $L_B =0$. Consequently, the expressions 
\begin{align*}
    &\lambda^S_\epsilon:= \frac{1+\epsilon}{2}\|\widetilde{b}\|_{L^\infty(\Omega\times J)}  + 2\|\widetilde{c}\|_{L^\infty(\Omega\times J)} 
 + \epsilon|\lambda |\;\text{ and} \\
&\lambda^F_\epsilon:=\frac{1+\epsilon}{2}\|\widetilde{b}\|_{L^\infty(\Omega\times J)} + \frac{L_B T^{1-\alpha}}{\beta_0\Gamma(2-\alpha)},
\end{align*}
shows that the time-step restriction in Theorem~\ref{stabilitytheorem1} is quite mild. If we further assume $\lambda=0$ (i.e., no integral term is involved), there is no time-step restriction in Theorem~\ref{stabilitytheorem1}.
\end{remark}
\section{Numerical results}\label{section7}  
 This section focusses on several numerical experiments whose results confirm our theoretical findings.
  Let $h^2 = \frac{1}{2}N^{-(2-\alpha)}$ in Theorem~\ref{L2H1errortheorem} and Corollary~\ref{Linftynormestimate}, respectively. Then the rate of convergence with respect to $L^2$-norm and max-norm will be computed using the formulae 
\begin{align*}
    &R_{\phi,h} = \log_e\left(\frac{E_{\phi,h_1}}{E_{\phi,h_2}}\right)/\log_e\left(\frac{h_1}{h_2}\right),\quad R_{\phi,\Delta t} = \log_e\left(\frac{E_{\phi,h_1}}{E_{\phi,h_2}}\right)/\log_e\left(\frac{\Delta t_1}{\Delta t_2}\right),\;\text{ and}\\
    &R_{h}^\infty = \log_e\left(\frac{E^\infty_{u,h_1}}{E^\infty_{u,h_2}}\right)/\log_e\left(\frac{h_1}{h_2}\right),\quad R_{\Delta t}^\infty = \log_e\left(\frac{E^\infty_{u,h_1}}{E^\infty_{u,h_2}}\right)/\log_e\left(\frac{\Delta t_1}{\Delta t_2}\right),
\end{align*}
respectively, where 
\begin{align}
    \nonumber & E_{u,h}:= \begin{cases}
        \max\limits_{1 \leq n \leq N}\|u_h^n - u(t_n)\| & \text{: if the analytical solution $u$ is known,}\\
        \max\limits_{1 \leq n \leq N}\|u_{h}^n - \widetilde{u}_h(t_n)\|  & \text{: otherwise,}
    \end{cases}\\
     \nonumber & E_{\boldsymbol{\sigma},h}:= \begin{cases}
     \max\limits_{1 \leq n \leq N}\;t_n^{\frac{\alpha}{2}}\|\boldsymbol{\sigma}_h^n - \boldsymbol{\sigma}(t_n)\| & \text{: if the analytical solution $u$ is known,}\\
     \max\limits_{1 \leq n \leq N}t_n^\frac{\alpha}{2}\|\boldsymbol{\sigma}_h^n - \widetilde{\boldsymbol{\sigma}}_h(t_n)\|  & \text{: otherwise, }
    \end{cases}\\
    \nonumber &
    E_{h}^\infty:= \begin{cases}
        \max\limits_{\boldsymbol{x}_j\in\mathcal{N},\;1 \leq n \leq N}\;t_n^{\frac{\alpha}{2}}|u_h^n(\boldsymbol{x}_j) - u(\boldsymbol{x}_j,\;t_n)| &\text{: if the analytical solution $u$ is known,}\\
        \max\limits_{\boldsymbol{x}_j\in\mathcal{N},\;1 \leq n \leq N}\;t_n^{\frac{\alpha}{2}}|u_h^n(\boldsymbol{x}_j) - \widetilde{u}_h(\boldsymbol{x}_j,\;t_n)| &\text{: otherwise,}
        \end{cases}
    \end{align}
    \noindent $\mathcal{N}:=\{\boldsymbol{x}_j\}_{j=1}^{N_h + 500} $ is a collection of points in the triangulation $\mathcal{T}_h\cap\overline{\Omega}$, $N_h$ denotes the number of nodal points in $\mathcal{T}_h\cap\overline{\Omega}$, and $(\widetilde{u}_h(t),\;\widetilde{\boldsymbol{\sigma}}_h(t)),\;t\in[0,\;T]$, is obtained from $ (u^k_{\frac{h}{2}},\;\boldsymbol{\sigma}^k_{\frac{h}{2}}),\;0\leq k \leq 2N$, by applying a piece-wise linear interpolation in time and a piece-wise quadratic interpolation in space direction. 
    
In all tables, we have used graded mesh (\ref{gradedmesh1}) with the grading parameter $\gamma = \frac{2-\alpha}{\alpha}+0.1$ to address the initial singularity effectively for non-smooth solution in the temporal direction. An upper bound $\widetilde{\Delta t}$ for the time-step restriction in Theorem~\ref{L2H1errortheorem} is estimated as follows:
\begin{align*}
    \max\limits_{1\leq n \leq N}\Delta t_n \;& \leq \;\min\Big\{\left( \delta\;\lambda^S_\epsilon\Gamma(2-\alpha)\right)^{-1/\alpha}, \left( \delta\; \lambda^F_\epsilon\Gamma(2-\alpha)\right)^{-1/\alpha}\Big\}\\
    &\approx \;\min\Big\{\left( \delta\;\widetilde{\lambda}^S_\epsilon\Gamma(2-\alpha)\right)^{-1/\alpha}, \left( \delta\; \widetilde{\lambda}^F_\epsilon\Gamma(2-\alpha)\right)^{-1/\alpha}\Big\}\;=:\widetilde{\Delta t},
\end{align*}
where 
\begin{align*}
\widetilde{\lambda}^S_\epsilon & = \displaystyle{\frac{1+\epsilon}{2}\max\limits_{\boldsymbol{x}_j\in N_h,\;1 \leq n \leq N}}|\widetilde{b}(x_j,t_n)|  + \;2 \max\limits_{\boldsymbol{x}_j\in N_h,\;1 \leq n \leq N} |\widetilde{c}(x_j,t_n)| 
 + \epsilon|\lambda | \approx \lambda^S_\epsilon,\quad N=64,\\
 \widetilde{\lambda}^F_\epsilon  &= \displaystyle{\frac{1+\epsilon}{2}}\max\limits_{\boldsymbol{x}_j\in N_h,\;1 \leq n \leq N}|\widetilde{b}(x_i,t_n)| + \frac{L_B T^{1-\alpha}}{\beta_0\Gamma(2-\alpha)} \approx \lambda^F_\epsilon,\quad N=64, \epsilon = 0.1,\;\delta = 1.1,\\
 \widetilde{L}_B  &= \max\limits_{\boldsymbol{x}_j\in N_h,\;1 \leq n \leq N} \|\boldsymbol{A}^{-1}(\boldsymbol{x}_j,t_n)\|_2^2\times \max\limits_{\boldsymbol{x}_j\in N_h,\;1 \leq n \leq N}\left\|\frac{d}{dt}\boldsymbol{A}(\boldsymbol{x}_j,t_n) \right\|_2\\
 &\approx\; \max\limits_{t\in[0,T]}\|\boldsymbol{A}^{-1}(\cdot,t)\|_{L^\infty(\Omega;\mathbb{R}^{d\times d})}^2\times \max\limits_{t\in[0,T]}\left\|\frac{d}{dt}\boldsymbol{A}(\cdot,t) \right\|_{L^\infty(\Omega;\mathbb{R}^{d\times d})}=:L_B,\\
 \widetilde{\beta}_0 & = \left( \max\limits_{\boldsymbol{x}_j\in N_h,\;1 \leq n \leq N} \| \boldsymbol{A}^{1/2}(\boldsymbol{x}_j,t_n)  \|_2\right)^{-2}\;\approx\;\left(\max\limits_{t\in[0,T]}\|\boldsymbol{A}^{1/2}(\cdot,t)\|_{L^\infty(\Omega;\mathbb{R}^{d\times d})}\right)^{-2} =:\beta_0,\\
 \widetilde{b}&:= \boldsymbol{b}^T\boldsymbol{A}^{-1}\boldsymbol{b},\;\text{ and }\; \widetilde{c}:=\max(0,-c).
\end{align*}  
The results include the error $E_{u,h}$, $E_{\boldsymbol{\sigma},h}$, and $E_{h}^\infty$, as well as the computed rate of convergence $R_{u,h}$, $R_{u,\Delta t}$, $R_{\boldsymbol{\sigma},h}$, $R_{\boldsymbol{\sigma},\Delta t}$, $R^\infty_h$ and $R^\infty_{\Delta t}$ for $\alpha = 0.2, \;0.5, \;0.8$ and $0.99$. {The implementation has been performed in FreeFem++ using the Raviart-Thomas finite elements $(P1dc, RT1)$, where $P1dc$ denotes the piecewise linear discontinuous finite element.

In the following three examples , say, Example~\ref{Time-indep-VariablePDEMFEM1},Example~\ref{VariablePIDEMFEM2_f8} and Example~\ref{VariablePIDEMFEM2_f10}, the time step length restriction as given in  Theorem~\ref{L2H1errortheorem} is almost satisfied.}

\begin{example}\label{Time-indep-VariablePDEMFEM1} 
We begin by considering a time-fractional PDE (\ref{pide}) over the domain $\Omega\times(0,T] = (-1,1)^2\times (0,0.5]$ with  coefficients specified as $\textbf{A}(\boldsymbol{x},t)=\begin{bmatrix} 1 + 0.1x_1^2 & 0.1x_1x_2 \\ 0.1x_1x_2 & 1 + 0.2x_2^2 \\ \end{bmatrix}$, $\textbf{b}(\boldsymbol{x},t) =\begin{bmatrix} 0\\ 0\\ \end{bmatrix}$, $c(\boldsymbol{x},t) =1 -x_1x_2e^{-1}$, and $\lambda =0$. The initial condition $u_0$ and source term $f$ are chosen to match the exact solution $ u(\boldsymbol{x},t)= \sin(\pi x_1)\sin(\pi x_2)(1 + t^{\alpha})$. Table~\ref{Time-indep-VariablePDEMFEM-table} shows the computed rates of convergence in both spatial and temporal directions, which confirm consistency between the computed and theoretical convergence rates. For this example, since $\textbf{A}$ is time-independent, $\textbf{b}=0$ and $c\geq 0$, there is no time-step restriction.

\begin{table}[H]
\centering
\begin{tabular}{||l|l|l|l|l|l|l||}
\hline
\multicolumn{2}{||l|}{$N$} & 4 & 8 & 16 & 32 & 64  \\ \hline \hline
\multicolumn{1}{||l|}{\multirow{6}{*}{$\alpha$=0.2}} 
 & $E_{u,h}$ & 
4.707e-01 & 1.284e-01 & 3.284e-02 & 8.992e-03 & 2.701e-03\\ \cline{2-7} 
\multicolumn{1}{||l|}{}  & $R_{u,h}$ & - & 1.87 & 1.97 & 1.99 & 2.00\\ \cline{2-7}
\multicolumn{1}{||l|}{}  & $R_{u,\Delta t}$ & - & 4.70 & 2.97 & 2.27 & 1.91\\ \cline{2-7}
 & $E_{\boldsymbol{\sigma},h}$ & 
1.431e+0 & 3.686e-01 & 9.313e-02 & 2.547e-02 & 7.655e-03\\ \cline{2-7} 
\multicolumn{1}{||l|}{}  & $R_{\boldsymbol{\sigma},h}$ & - & 1.96 & 1.98 & 1.99 & 2.00\\ \cline{2-7}
\multicolumn{1}{||l|}{}  & $R_{\boldsymbol{\sigma}, \Delta t}$ & - & 4.91 & 2.99 & 2.27 & 1.91\\ \cline{2-7}
 & $E^\infty_h$ & 
1.059e+0 & 3.411e-01 & 8.874e-02 & 2.412e-02 & 7.302e-03\\ \cline{2-7} 
\multicolumn{1}{||l|}{}  & $R^\infty_h$ & - & 1.64 & 1.94 & 2.00 & 1.98\\ \cline{2-7}
\multicolumn{1}{||l|}{}  & $R^\infty_{\Delta t}$ & - & 4.10 & 2.93 & 2.28 & 1.89
\\ \hline \hline
\multicolumn{1}{||l|}{\multirow{6}{*}{$\alpha$=0.5}} 
 & $E_{u,h}$ & 
4.295e-01 & 1.665e-01 & 6.680e-02 & 2.557e-02 & 8.206e-03\\ \cline{2-7} 
\multicolumn{1}{||l|}{}  & $R_{u,h}$ & - & 1.85 & 1.94 & 1.98 & 1.99\\ \cline{2-7}
\multicolumn{1}{||l|}{}  & $R_{u,\Delta t}$ & - & 1.71 & 1.46 & 1.46 & 1.68\\ \cline{2-7}
 & $E_{\boldsymbol{\sigma},h}$ & 
1.177e+0 & 4.344e-01 & 1.714e-01 & 6.531e-02 & 2.094e-02\\ \cline{2-7} 
\multicolumn{1}{||l|}{}  & $R_{\boldsymbol{\sigma},h}$ & - & 1.95 & 1.98 & 1.99 & 1.99\\ \cline{2-7}
\multicolumn{1}{||l|}{}  & $R_{\boldsymbol{\sigma}, \Delta t}$ & - & 1.8 & 1.49 & 1.46 & 1.68\\ \cline{2-7}
 & $E^\infty_h$ & 
8.714e-01 & 3.83e-01 & 1.547e-01 & 6.406e-02 & 1.986e-02\\ \cline{2-7} 
\multicolumn{1}{||l|}{}  & $R^\infty_h$ & - & 1.61 & 1.93 & 1.82 & 2.05\\ \cline{2-7}
\multicolumn{1}{||l|}{}  & $R^\infty_{\Delta t}$ & - & 1.48 & 1.45 & 1.34 & 1.73
\\ \hline \hline
\multicolumn{1}{||l|}{\multirow{6}{*}{$\alpha$=0.8}} 
 & $E_{u,h}$ & 
9.630e-01 & 3.961e-01 & 1.535e-01 & 6.16e-02 & 2.764e-02\\ \cline{2-7} 
\multicolumn{1}{||l|}{}  & $R_{u,h}$ & - & 2.19 & 1.85 & 1.94 & 1.98\\ \cline{2-7}
\multicolumn{1}{||l|}{}  & $R_{u,\Delta t}$ & - & 1.36 & 1.41 & 1.34 & 1.16\\ \cline{2-7}
 & $E_{\boldsymbol{\sigma},h}$ & 
2.213e+0 & 9.784e-01 & 3.61e-01 & 1.425e-01 & 6.366e-02\\ \cline{2-7} 
\multicolumn{1}{||l|}{}  & $R_{\boldsymbol{\sigma},h}$ & - & 2.01 & 1.95 & 1.98 & 1.99\\ \cline{2-7}
\multicolumn{1}{||l|}{}  & $R_{\boldsymbol{\sigma}, \Delta t}$ & - & 1.25 & 1.48 & 1.36 & 1.17\\ \cline{2-7}
 & $E^\infty_h$ & 
9.143e-01 & 7.253e-01 & 3.186e-01 & 1.286e-01 & 6.074e-02\\ \cline{2-7} 
\multicolumn{1}{||l|}{}  & $R^\infty_h$ & - & 0.57 & 1.61 & 1.93 & 1.85\\ \cline{2-7}
\multicolumn{1}{||l|}{}  & $R^\infty_{\Delta t}$ & - & 0.36 & 1.22 & 1.33 & 1.09
\\  \hline \hline
\multicolumn{1}{||l|}{\multirow{6}{*}{$\alpha$=0.99}} 
 & $E_{u,h}$ & 
9.196e-01 & 3.783e-01 & 2.237e-01 & 1.032e-01 & 5.883e-02\\ \cline{2-7} 
\multicolumn{1}{||l|}{}  & $R_{u,h}$ & - & 2.19 & 1.83 & 1.91 & 1.95\\ \cline{2-7}
\multicolumn{1}{||l|}{}  & $R_{u,\Delta t}$ & - & 1.30 & 0.76 & 1.12 & 0.81 \\ \cline{2-7}
 & $E_{\boldsymbol{\sigma},h}$ & 
1.979e+0 & 8.749e-01 & 4.999e-01 & 2.252e-01 & 1.274e-01\\ \cline{2-7} 
\multicolumn{1}{||l|}{}  & $R_{\boldsymbol{\sigma},h}$ & - & 2.01 & 1.95 & 1.97 & 1.98\\ \cline{2-7}
\multicolumn{1}{||l|}{}  & $R_{\boldsymbol{\sigma}, \Delta t}$ & - & 1.19 & 0.812 & 1.15 & 0.823\\ \cline{2-7}
 & $E^\infty_h$ & 
8.177e-01 & 6.487e-01 & 3.759e-01 & 2.088e-01 & 1.150e-01\\ \cline{2-7} 
\multicolumn{1}{||l|}{}  & $R^\infty_h$ & - & 0.57 & 1.90 & 1.45 & 2.07\\ \cline{2-7}
\multicolumn{1}{||l|}{}  & $R^\infty_{\Delta t}$ & - & 0.34 & 0.79 & 0.85 & 0.86 \\ \hline
\end{tabular}
\caption{Error $E_{u,h}$, $E_{\boldsymbol{\sigma},h}$, $E^\infty_h$ and rate of convergence $R_{u,h}$, $R_{u,\Delta t}$, $R_{\boldsymbol{\sigma},h}$, $R_{\boldsymbol{\sigma},\Delta t}$, $R^\infty_h$ and $R^\infty_{\Delta t}$ of the proposed method for Example~\ref{Time-indep-VariablePDEMFEM1}.}\label{Time-indep-VariablePDEMFEM-table}
\end{table}

\end{example}

\begin{example}\label{VariablePIDEMFEM2_f8} 
We now consider a time-fractional PDE (\ref{pide}) over the domain $\Omega\times(0,T] = (-1,1)^2\times (0,0.5]$ with space-time dependent coefficients $\textbf{A}(\boldsymbol{x},t)=\begin{bmatrix} 1 + 0.1x_1^2t & 0.1x_1x_2t \\ 0.1x_1x_2t & 1 + 0.2x_2^2t \\ \end{bmatrix}$, $\textbf{b}(\boldsymbol{x},t) =\begin{bmatrix} x_1e^{-t}\\ x_2 e^{-t}\\ \end{bmatrix}$, $c(\boldsymbol{x},t) =1 -x_1x_2e^{-t}$, and $\lambda =0$. The initial condition $u_0$ and the source term $f$ are chosen to align with the exact solution $ u(\boldsymbol{x},t)= \sin(\pi x_1)\sin(\pi x_2)(1 + t^{\alpha})$. The computed rates of convergence for both spatial and temporal directions are presented in Table~\ref{2dM222_f1}. These results confirm agreement between the computed and theoretical orders of convergence.

\begin{table}[H]
\centering
\begin{tabular}{||l|l|l|l|l|l|l||}
\hline
\multicolumn{2}{||l|}{$N$} & 4 & 8 & 16 & 32 & 64  \\ \hline \hline
\multicolumn{1}{||l|}{\multirow{6}{*}{$\alpha$=0.2}} 
& $\widetilde{\Delta t}$ & \multicolumn{5}{|l||} {\hspace{3.7cm}8.859e-01}
\\ \cline{2-7} 
\multicolumn{1}{||l|}{}  & $\Delta t$
& 4.635e-01 & 3.516e-01 & 2.22e-01 & 1.254e-01 & 6.675e-02\\ \cline{2-7} 
 & $E_{u,h}$ & 
4.716e-01 & 1.284e-01 & 3.284e-02 & 8.992e-03 & 2.701e-03\\ \cline{2-7} 
\multicolumn{1}{||l|}{}  & $R_{u,h}$ & - & 1.88 & 1.97 & 1.99 & 2.00\\ \cline{2-7}
\multicolumn{1}{||l|}{}  & $R_{u,\Delta t}$ & - & 4.71 & 2.97 & 2.27 & 1.91\\ \cline{2-7}
 & $E_{\boldsymbol{\sigma},h}$ & 
1.380e+0 & 3.564e-01 & 8.988e-02 & 2.455e-02 & 7.377e-03\\ \cline{2-7} 
\multicolumn{1}{||l|}{}  & $R_{\boldsymbol{\sigma},h}$ & - & 1.95 & 1.99 & 1.99 & 2.00\\ \cline{2-7}
\multicolumn{1}{||l|}{}  & $R_{\boldsymbol{\sigma}, \Delta t}$ & - & 4.90 & 3.00 & 2.27 & 1.91\\ \cline{2-7}
 & $E^\infty_h$ & 
1.053e+0 & 3.41e-01 & 8.872e-02 & 2.411e-02 & 7.301e-03\\ \cline{2-7} 
\multicolumn{1}{||l|}{}  & $R^\infty_h$ & - & 1.63 & 1.94 & 2.00 & 1.98\\ \cline{2-7}
\multicolumn{1}{||l|}{}  & $R^\infty_{\Delta t}$ & - & 4.08 & 2.93 & 2.28 & 1.89
\\ \hline \hline
\multicolumn{1}{||l|}{\multirow{6}{*}{$\alpha$=0.5}} 
& $\widetilde{\Delta t}$ & \multicolumn{5}{|l||} {\hspace{3.7cm}1.052e+0}
\\ \cline{2-7} 
\multicolumn{1}{||l|}{}  & $\Delta t$
& 2.95e-01 & 1.694e-01 & 9.066e-02 & 4.686e-02 & 2.382e-02\\ \cline{2-7} 
 & $E_{u,h}$ & 
4.304e-01 & 1.665e-01 & 6.68e-02 & 2.557e-02 & 8.206e-03\\ \cline{2-7} 
\multicolumn{1}{||l|}{}  & $R_{u,h}$ & - & 1.86 & 1.94 & 1.98 & 1.99\\ \cline{2-7}
\multicolumn{1}{||l|}{}  & $R_{u,\Delta t}$ & - & 1.71 & 1.46 & 1.46 & 1.68\\ \cline{2-7}
 & $E_{\boldsymbol{\sigma},h}$ & 
1.135e+0 & 4.202e-01 & 1.656e-01 & 6.302e-02 & 2.019e-02\\ \cline{2-7} 
\multicolumn{1}{||l|}{}  & $R_{\boldsymbol{\sigma},h}$ & - & 1.95 & 1.98 & 1.99 & 1.99\\ \cline{2-7}
\multicolumn{1}{||l|}{}  & $R_{\boldsymbol{\sigma}, \Delta t}$ & - & 1.79 & 1.49 & 1.46 & 1.68\\ \cline{2-7}
 & $E^\infty_h$ & 
8.656e-01 & 3.822e-01 & 1.546e-01 & 6.404e-02 & 1.985e-02\\ \cline{2-7} 
\multicolumn{1}{||l|}{}  & $R^\infty_h$ & - & 1.60 & 1.93 & 1.82 & 2.05\\ \cline{2-7}
\multicolumn{1}{||l|}{}  & $R^\infty_{\Delta t}$ & - & 1.47 & 1.45 & 1.34 & 1.73
\\ \hline \hline
\multicolumn{1}{||l|}{\multirow{6}{*}{$\alpha$=0.8}} 
& $\widetilde{\Delta t}$& \multicolumn{5}{|l||} {\hspace{3.7cm}9.877e-01}
\\ \cline{2-7} 
\multicolumn{1}{||l|}{}  & $\Delta t$
& 1.844e-01 & 9.618e-02 & 4.905e-02 & 2.476e-02 & 1.244e-02\\ \cline{2-7} 
 & $E_{u,h}$ & 
9.668e-01 & 3.969e-01 & 1.536e-01 & 6.160e-02 & 2.764e-02\\ \cline{2-7} 
\multicolumn{1}{||l|}{}  & $R_{u,h}$ & - & 2.20 & 1.86 & 1.94 & 1.98\\ \cline{2-7}
\multicolumn{1}{||l|}{}  & $R_{u,\Delta t}$ & - & 1.37 & 1.41 & 1.34 & 1.16\\ \cline{2-7}
 & $E_{\boldsymbol{\sigma},h}$ & 
2.165e+0 & 9.437e-01 & 3.492e-01 & 1.376e-01 & 6.144e-02\\ \cline{2-7} 
\multicolumn{1}{||l|}{}  & $R_{\boldsymbol{\sigma},h}$ & - & 2.05 & 1.95 & 1.98 & 1.99\\ \cline{2-7}
\multicolumn{1}{||l|}{}  & $R_{\boldsymbol{\sigma}, \Delta t}$ & - & 1.28 & 1.48 & 1.36 & 1.17\\ \cline{2-7}
 & $E^\infty_h$ & 
9.296e-01 & 7.203e-01 & 3.179e-01 & 1.285e-01 & 6.073e-02\\ \cline{2-7} 
\multicolumn{1}{||l|}{}  & $R^\infty_h$ & - & 0.63 & 1.60 & 1.93 & 1.85\\ \cline{2-7}
\multicolumn{1}{||l|}{}  & $R^\infty_{\Delta t}$ & - & 0.39 & 1.21 & 1.32 & 1.09 \\ \hline \hline
\multicolumn{1}{||l|}{\multirow{6}{*}{$\alpha$=0.99}} 
& $\widetilde{\Delta t}$& \multicolumn{5}{|l||} {\hspace{3.7cm}9.135e-01}
\\ \cline{2-7} 
\multicolumn{1}{||l|}{}  & $\Delta t$
& 1.377e-01 & 6.946e-02 & 3.487e-02 & 1.746e-02 & 8.743e-03\\ \cline{2-7} 
 & $E_{u,h}$ & 
9.232e-01 & 3.79e-01 & 2.239e-01 & 1.032e-01 & 5.883e-02\\ \cline{2-7} 
\multicolumn{1}{||l|}{}  & $R_{u,h}$ & - & 2.20 & 1.83 & 1.91 & 1.95\\ \cline{2-7}
\multicolumn{1}{||l|}{}  & $R_{u,\Delta t}$ & - & 1.30 & 0.76 & 1.12 & 0.81 \\ \cline{2-7}
 & $E_{\boldsymbol{\sigma},h}$ & 
1.936e+0 & 8.440e-01 & 4.838e-01 & 2.178e-01 & 1.230e-01\\ \cline{2-7} 
\multicolumn{1}{||l|}{}  & $R_{\boldsymbol{\sigma},h}$ & - & 2.05 & 1.93 & 1.97 & 1.98\\ \cline{2-7}
\multicolumn{1}{||l|}{}  & $R_{\boldsymbol{\sigma}, \Delta t}$ & - & 1.21 & 0.81 & 1.15 & 0.82\\ \cline{2-7}
 & $E^\infty_h$ & 
    8.306e-01 & 6.440e-01 & 3.761e-01 & 2.087e-01 & 1.150e-01\\ \cline{2-7} 
\multicolumn{1}{||l|}{}  & $R^\infty_h$ & - & 0.63 & 1.87 & 1.45 & 2.07\\ \cline{2-7}
\multicolumn{1}{||l|}{}  & $R^\infty_{\Delta t}$ & - & 0.37 & 0.78 & 0.85 & 0.86 \\ \hline
\end{tabular}
\caption{Error $E_{u,h}$, $E_{\boldsymbol{\sigma},h}$, $E^\infty_h$ and rate of convergence $R_{u,h}$, $R_{u,\Delta t}$, $R_{\boldsymbol{\sigma},h}$, $R_{\boldsymbol{\sigma},\Delta t}$, $R^\infty_h$ and $R^\infty_{\Delta t}$ of the proposed method for Example~\ref{VariablePIDEMFEM2_f8}.}\label{2dM222_f1}
\end{table}

\end{example}

\begin{example}\label{VariablePIDEMFEM2_f10} 
In this example, we consider the time-fractional PIDE (\ref{pide}) with variable coefficients over the domain $\Omega\times(0,T] = (-1,1)^2\times (0,1]$ with the initial condition as $u_0 = x_1(1-|x_1|)x_2(1-|x_2|)$ and $\textbf{A}(\boldsymbol{x},t)=\begin{bmatrix} 1 + 0.1x_1^2t & 0.1x_1x_2t \\ 0.1x_1x_2t & 1 + 0.2x_2^2t \\ \end{bmatrix}$, $\textbf{b}(\boldsymbol{x},t) =\begin{bmatrix} x_1e^{-t}\\ x_2 e^{-t}\\ \end{bmatrix}$, $c(\boldsymbol{x},t) =1 -x_1x_2e^{-t}$, $\lambda = \frac{1}{2}\;$, $g(\boldsymbol{x},\boldsymbol{y}) = e^{-\|\boldsymbol{x}-\boldsymbol{y}\|^2}$, and the source term $f(\boldsymbol{x}, t)=e^{-t}\sin(\pi x_1)\sin(\pi x_2)$.
Table~\ref{2dpide_f10} demonstrates that the computational rate is second-order for both variables $u$ and $\boldsymbol{\sigma}$, consistent with our theoretical findings. Here, we have avoided the inversion of a dense matrix without compromising the rate of convergence by using the proposed IMEX method. This method explicitly handles the integral term, employing a second-order extrapolation as defined in (\ref{2ndOrderExtrapolation}).

\begin{table}[H]
\centering
\begin{tabular}{||l|l|l|l|l|l|l||}
\hline
\multicolumn{2}{||l|}{$N$} & 4 & 8 & 16 & 32 & 64  \\ \hline \hline
\multicolumn{1}{||l|}{\multirow{6}{*}{$\alpha$=0.2}} 
& $\widetilde{\Delta t}$ & \multicolumn{5}{|l||} {\hspace{3.7cm}6.941e-01}
\\ \cline{2-7} 
\multicolumn{1}{||l|}{}  & $\Delta t$
& 9.270e-01 & 7.033e-01 & 4.441e-01 & 2.509e-01 & 1.335e-01
\\ \cline{2-7} 
\multicolumn{1}{||l|}{} & 
 $E_{u,h}$ & 1.789e-02 & 5.412e-03 & 1.392e-03 & 3.781e-04 & 1.146e-04 \\ \cline{2-7}
\multicolumn{1}{||l|}{}  & $R_{u,h}$ & - & 1.72 & 1.96 & 2.00 & 1.98\\ \cline{2-7}
\multicolumn{1}{||l|}{}  & $R_{u,\Delta t}$ & - & 4.33 & 2.96 & 2.28 & 1.89\\ \cline{2-7}
\multicolumn{1}{||l|}{}  & $E_{\boldsymbol{\sigma},h}$ & 3.059e-02 & 8.138e-03 & 2.055e-03 & 5.611e-04 & 1.687e-04 \\ \cline{2-7}
\multicolumn{1}{||l|}{}  & $R_{\boldsymbol{\sigma},h}$ & - & 1.91 & 1.99 & 2.00 & 2.00\\ \cline{2-7}
\multicolumn{1}{||l|}{}  & $R_{\boldsymbol{\sigma}, \Delta t}$ & - & 4.80 & 2.99 & 2.27 & 1.90\\ \cline{2-7}
\multicolumn{1}{||l|}{}  & $E^\infty_h$ & 1.841e-02 & 5.541e-03 & 1.405e-03 & 3.861e-04 & 1.199e-04 \\ \cline{2-7}
\multicolumn{1}{||l|}{}  & $R^\infty_h$ & - & 1.73 & 1.98 & 1.99 & 1.94\\ \cline{2-7}
\multicolumn{1}{||l|}{}  & $R^\infty_{\Delta t}$ & - & 4.35 & 2.98 & 2.26 & 1.85
\\ \hline \hline
\multicolumn{1}{||l|}{\multirow{6}{*}{$\alpha$=0.5}} 
& $\widetilde{\Delta t}$ & \multicolumn{5}{|l||} {\hspace{3.7cm}9.544e-01}
\\ \cline{2-7} 
\multicolumn{1}{||l|}{}  & $\Delta t$
& 5.901e-01 & 3.389e-01 & 1.813e-01 & 9.373e-02 & 4.764e-02
\\ \cline{2-7} 
\multicolumn{1}{||l|}{} & 
 $E_{u,h}$ & 1.788e-02 & 7.328e-03 & 3.094e-03 & 1.168e-03 & 3.781e-04\\ \cline{2-7} 
\multicolumn{1}{||l|}{}  & $R_{u,h}$ & - & 1.75 & 1.83 & 2.01 & 1.98\\ \cline{2-7}
\multicolumn{1}{||l|}{}  & $R_{u,\Delta t}$ & - & 1.61 & 1.38 & 1.48 & 1.67\\ \cline{2-7}
\multicolumn{1}{||l|}{}   & $E_{\boldsymbol{\sigma},h}$ & 
2.249e-02 & 8.876e-03 & 3.514e-03 & 1.336e-03 & 4.283e-04\\ \cline{2-7} 
\multicolumn{1}{||l|}{}  & $R_{\boldsymbol{\sigma},h}$ & - & 1.82 & 1.97 & 1.99 & 1.99\\ \cline{2-7}
\multicolumn{1}{||l|}{}  & $R_{\boldsymbol{\sigma}, \Delta t}$ & - & 1.68 & 1.48 & 1.46 & 1.68\\ \cline{2-7}
 \multicolumn{1}{||l|}{}  & $E^\infty_h$ & 
1.367e-02 & 5.724e-03 & 2.354e-03 & 9.46e-04 & 2.97e-04\\ \cline{2-7} 
\multicolumn{1}{||l|}{}  & $R^\infty_h$ & - & 1.71 & 1.89 & 1.88 & 2.03\\ \cline{2-7}
\multicolumn{1}{||l|}{}  & $R^\infty_{\Delta t}$ & - & 1.57 & 1.42 & 1.38 & 1.71
\\ \hline \hline
\multicolumn{1}{||l|}{\multirow{6}{*}{$\alpha$=0.8}} 
& $\widetilde{\Delta t}$& \multicolumn{5}{|l||} {\hspace{3.7cm}9.292e-01}
\\ \cline{2-7} 
\multicolumn{1}{||l|}{}  & $\Delta t$
& 3.689e-01 & 1.923e-01 & 9.810e-02 & 4.952e-02 & 2.488e-02\\ \cline{2-7} 
\multicolumn{1}{||l|}{}  & $E_{u,h}$ & 
4.044e-02 & 1.788e-02 & 7.328e-03 & 3.094e-03 & 1.391e-03\\ \cline{2-7} 
\multicolumn{1}{||l|}{}  & $R_{u,h}$ & - & 2.01 & 1.75 & 1.83 & 1.97\\ \cline{2-7}
\multicolumn{1}{||l|}{}  & $R_{u,\Delta t}$ & - & 1.25 & 1.33 & 1.26 & 1.16\\ \cline{2-7}
\multicolumn{1}{||l|}{}   & $E_{\boldsymbol{\sigma},h}$ & 
5.075e-02 & 1.975e-02 & 7.406e-03 & 2.931e-03 & 1.308e-03\\ \cline{2-7} 
\multicolumn{1}{||l|}{}  & $R_{\boldsymbol{\sigma},h}$ & - & 2.33 & 1.92 & 1.97 & 1.99\\ \cline{2-7}
\multicolumn{1}{||l|}{}  & $R_{\boldsymbol{\sigma}, \Delta t}$ & - & 1.45 & 1.46 & 1.36 & 1.17\\ \cline{2-7}
 \multicolumn{1}{||l|}{}  & $E^\infty_h$ & 
2.099e-02 & 1.191e-02 & 4.892e-03 & 1.992e-03 & 9.281e-04\\ \cline{2-7} 
\multicolumn{1}{||l|}{}  & $R^\infty_h$ & - & 1.40 & 1.74 & 1.91 & 1.88\\ \cline{2-7}
\multicolumn{1}{||l|}{}  & $R^\infty_{\Delta t}$ & - & 0.87 & 1.32 & 1.31 & 1.11
\\  \hline \hline
\multicolumn{1}{||l|}{\multirow{6}{*}{$\alpha$=0.99}} 
& $\widetilde{\Delta t}$& \multicolumn{5}{|l||} {\hspace{3.7cm}8.695e-01}
\\ \cline{2-7} 
\multicolumn{1}{||l|}{}  & $\Delta t$
& 2.754e-01 & 1.389e-01 & 6.974e-02 & 3.493e-02 & 1.748e-02\\ \cline{2-7} 
\multicolumn{1}{||l|}{}  & $E_{u,h}$ & 
4.044e-02 & 1.788e-02 & 1.164e-02 & 5.411e-03 & 3.094e-03\\ \cline{2-7} 
\multicolumn{1}{||l|}{}  & $R_{u,h}$ & - & 2.01 & 1.49 & 1.89 & 1.94\\ \cline{2-7}
\multicolumn{1}{||l|}{}  & $R_{u,\Delta t}$ & - & 1.19 & 0.62 & 1.11 & 0.81 \\ \cline{2-7}
\multicolumn{1}{||l|}{}   & $E_{\boldsymbol{\sigma},h}$ & 
4.707e-02 & 1.808e-02 & 1.043e-02 & 4.717e-03 & 2.668e-03\\ \cline{2-7} 
\multicolumn{1}{||l|}{}  & $R_{\boldsymbol{\sigma},h}$ & - & 2.36 & 1.91 & 1.96 & 1.98\\ \cline{2-7}
\multicolumn{1}{||l|}{}  & $R_{\boldsymbol{\sigma}, \Delta t}$ & - & 1.40 & 0.80 & 1.15 & 0.82 \\ \cline{2-7}
 \multicolumn{1}{||l|}{}  & $E^\infty_h$ & 
1.969e-02 & 1.091e-02 & 6.682e-03 & 3.354e-03 & 1.826e-03\\ \cline{2-7} 
\multicolumn{1}{||l|}{}  & $R^\infty_h$ & - & 1.46 & 1.71 & 1.70 & 2.11\\ \cline{2-7}
\multicolumn{1}{||l|}{}  & $R^\infty_{\Delta t}$ & - & 0.86 & 0.71 & 1.00 & 0.88
\\  \hline 
\end{tabular}
\caption{Error $E_{u,h}$, $E_{\boldsymbol{\sigma},h}$, $E^\infty_h$ and rate of convergence $R_{u,h}$, $R_{u,\Delta t}$, $R_{\boldsymbol{\sigma},h}$, $R_{\boldsymbol{\sigma},\Delta t}$, $R^\infty_h$ and $R^\infty_{\Delta t}$ of the proposed method for Example~\ref{VariablePIDEMFEM2_f10}.}\label{2dpide_f10}
\end{table}

Furthermore, Table~\ref{2dpide_f101} demonstrates that if we consider the error estimate in Theorem~\ref{L2H1errortheorem} without the weight $t_n^{\alpha/2}$, the rate of convergence is not optimal. In this table, the $L^2$-norm error $E_{\boldsymbol{\sigma},h}$ for $\boldsymbol{\sigma}$ and the $L^\infty$-norm error $E_{h}^\infty$ for $u$ are computed using the following formula
\begin{align*}
& E_{\boldsymbol{\sigma},h} = \max\limits_{1 \leq n \leq N}\|\boldsymbol{\sigma}_h^n - \widetilde{\boldsymbol{\sigma}}_h(t_n)\|,\;\text{ and}\\
    &    E_{h}^\infty= 
        \max\limits_{\boldsymbol{x}_j\in\mathcal{N},\;1 \leq n \leq N}\;|u_h^n(\boldsymbol{x}_j) - \widetilde{u}_h(\boldsymbol{x}_j,\;t_n)|. 
\end{align*}

\begin{table}[H]
\centering
\begin{tabular}{||l|l|l|l|l|l|l||}
\hline
\multicolumn{2}{||l|}{$N$} & 4 & 8 & 16 & 32 & 64  \\ \hline \hline
\multicolumn{1}{||l|}{\multirow{6}{*}{$\alpha$=0.2}} 
& $\widetilde{\Delta t}$ & \multicolumn{5}{|l||} {\hspace{3.7cm}6.941e-01}
\\ \cline{2-7} 
\multicolumn{1}{||l|}{}  & $\Delta t$
& 9.270e-01 & 7.033e-01 & 4.441e-01 & 2.509e-01 & 1.335e-01
\\ \cline{2-7} 
\multicolumn{1}{||l|}{} & 
 $E_{u,h}$ & 1.789e-02 & 5.412e-03 & 1.392e-03 & 3.781e-04 & 1.146e-04 \\ \cline{2-7}
\multicolumn{1}{||l|}{}  & $R_{u,h}$ & - & 1.72 & 1.96 & 2.00 & 1.98\\ \cline{2-7}
\multicolumn{1}{||l|}{}  & $R_{u,\Delta t}$ & - & 4.33 & 2.96 & 2.28 & 1.89\\ \cline{2-7}
\multicolumn{1}{||l|}{}  & $E_{\boldsymbol{\sigma},h}$ & 4.857e-02 & 1.796e-02 & 4.536e-03 & 1.827e-03 & 4.872e-4  \\ \cline{2-7}
\multicolumn{1}{||l|}{}  & $R_{\boldsymbol{\sigma},h}$ & - & 1.44 & 1.98 & {\color{red}1.40} & 2.20\\ \cline{2-7}
\multicolumn{1}{||l|}{}  & $R_{\boldsymbol{\sigma}, \Delta t}$ & - & {\color{red}3.60} & {\color{red}2.99} & {\color{red}1.59} & 2.10\\ \cline{2-7}
\multicolumn{1}{||l|}{}  & $E^\infty_h$ & 3.190e-02 & 1.379e-02 & 6.018e-03 & 2.390e-04 &  2.426e-04\\ \cline{2-7}
\multicolumn{1}{||l|}{}  & $R^\infty_h$ & - & 1.60 & 1.89 & 1.97 & 1.91\\ \cline{2-7}
\multicolumn{1}{||l|}{}  & $R^\infty_{\Delta t}$ & - & 4.02 & 2.85 & 2.24 & 1.83
\\ \hline \hline
\multicolumn{1}{||l|}{\multirow{6}{*}{$\alpha$=0.5}} 
& $\widetilde{\Delta t}$ & \multicolumn{5}{|l||} {\hspace{3.7cm}9.544e-01}
\\ \cline{2-7} 
\multicolumn{1}{||l|}{}  & $\Delta t$
& 5.901e-01 & 3.389e-01 & 1.813e-01 & 9.373e-02 & 4.764e-02
\\ \cline{2-7} 
\multicolumn{1}{||l|}{} & 
 $E_{u,h}$ & 1.788e-02 & 7.328e-03 & 3.094e-03 & 1.168e-03 & 3.781e-04\\ \cline{2-7} 
\multicolumn{1}{||l|}{}  & $R_{u,h}$ & - & 1.75 & 1.83 & 2.01 & 1.98\\ \cline{2-7}
\multicolumn{1}{||l|}{}  & $R_{u,\Delta t}$ & - & 1.61 & 1.38 & 1.48 & 1.67\\ \cline{2-7}
\multicolumn{1}{||l|}{}   & $E_{\boldsymbol{\sigma},h}$ & 4.857e-2 & 2.163e-2 & 1.016e-2 & 4.53e-3 & 1.827e-3\\ \cline{2-7} 
\multicolumn{1}{||l|}{}  & $R_{\boldsymbol{\sigma},h}$ & - & {\color{red}1.58} & {\color{red}1.61} & {\color{red}1.66} & {\color{red}1.59} \\ \cline{2-7}
\multicolumn{1}{||l|}{}  & $R_{\boldsymbol{\sigma}, \Delta t}$ & - & {\color{red}1.46} & {\color{red}1.21} & {\color{red}1.22} & {\color{red}1.34} \\ \cline{2-7}
 \multicolumn{1}{||l|}{}  & $E^\infty_h$ & 
3.108e-2 & 1.379e-2 & 6.018e-3 & 2.39e-3 & 7.676e-4\\ \cline{2-7} 
\multicolumn{1}{||l|}{}  & $R^\infty_h$ & - & 1.59 & 1.76 & 1.90 & 1.99 \\ \cline{2-7}
\multicolumn{1}{||l|}{}  & $R^\infty_{\Delta t}$ & - & 1.47 & 1.33 & 1.40 & 1.68
\\ \hline \hline
\multicolumn{1}{||l|}{\multirow{6}{*}{$\alpha$=0.8}} 
& $\widetilde{\Delta t}$& \multicolumn{5}{|l||} {\hspace{3.7cm}9.292e-01}
\\ \cline{2-7} 
\multicolumn{1}{||l|}{}  & $\Delta t$
& 3.689e-1 & 1.923e-1 & 9.81e-2 & 4.952e-2 & 2.488e-2\\ \cline{2-7} 
\multicolumn{1}{||l|}{}  & $E_{u,h}$ & 
4.044e-2 & 1.788e-2 & 7.328e-3 & 3.094e-3 & 1.391e-3\\ \cline{2-7} 
\multicolumn{1}{||l|}{}  & $R_{u,h}$ & - & 2.01 & 1.75 & 1.83 & 1.97\\ \cline{2-7}
\multicolumn{1}{||l|}{}  & $R_{u,\Delta t}$ & - & 1.25 & 1.33 & 1.26 & 1.16\\ \cline{2-7}
\multicolumn{1}{||l|}{}   & $E_{\boldsymbol{\sigma},h}$ & 
1.304e-1 & 4.857e-2 & 2.163e-2 & 1.016e-2 & 4.536e-3\\ \cline{2-7} 
\multicolumn{1}{||l|}{}  & $R_{\boldsymbol{\sigma},h}$ & - & 2.44 & {\color{red}1.58} & {\color{red}1.61} & {\color{red}1.99} \\ \cline{2-7}
\multicolumn{1}{||l|}{}  & $R_{\boldsymbol{\sigma}, \Delta t}$ & - & {\color{red}1.52} & {\color{red}1.20} & {\color{red}1.10} & {\color{red}1.17}\\ \cline{2-7}
 \multicolumn{1}{||l|}{}  & $E^\infty_h$ & 
4.542e-2 & 3.108e-2 & 1.379e-2 & 6.018e-3 & 2.759e-3\\ \cline{2-7} 
\multicolumn{1}{||l|}{}  & $R^\infty_h$ & - & 0.94 & 1.59 & 1.76 & 1.92\\ \cline{2-7}
\multicolumn{1}{||l|}{}  & $R^\infty_{\Delta t}$ & - & 0.58 & 1.21 & 1.21 & 1.13
\\  \hline \hline
\multicolumn{1}{||l|}{\multirow{6}{*}{$\alpha$=0.99}} 
& $\widetilde{\Delta t}$& \multicolumn{5}{|l||} {\hspace{3.7cm}8.695e-01}
\\ \cline{2-7} 
\multicolumn{1}{||l|}{}  & $\Delta t$
& 2.754e-1 & 1.389e-1 & 6.974e-2 & 3.493e-2 & 1.748e-2\\ \cline{2-7} 
\multicolumn{1}{||l|}{}  & $E_{u,h}$ & 
4.044e-2 & 1.788e-2 & 1.164e-2 & 5.411e-3 & 3.094e-3\\ \cline{2-7} 
\multicolumn{1}{||l|}{}  & $R_{u,h}$ & - & 2.01 & 1.49 & 1.89 & 1.94\\ \cline{2-7}
\multicolumn{1}{||l|}{}  & $R_{u,\Delta t}$ & - & 1.19 & 0.62 & 1.11 & 0.81 \\ \cline{2-7}
\multicolumn{1}{||l|}{}   & $E_{\boldsymbol{\sigma},h}$ & 
1.304e-1 & 4.857e-2 & 3.944e-2 & 1.795e-2 & 1.016e-2\\ \cline{2-7} 
\multicolumn{1}{||l|}{}  & $R_{\boldsymbol{\sigma},h}$ & - & 2.44 & {\color{red}0.72} & 1.94 & 1.98\\ \cline{2-7}
\multicolumn{1}{||l|}{}  & $R_{\boldsymbol{\sigma}, \Delta t}$ & - & 1.44 & {\color{red}0.30} & 1.14 & 0.82 \\ \cline{2-7}
 \multicolumn{1}{||l|}{}  & $E^\infty_h$ & 
4.542e-2 & 3.108e-2 & 2.005e-2 & 1.024e-2 & 6.018e-3\\ \cline{2-7} 
\multicolumn{1}{||l|}{}  & $R^\infty_h$ & - & 0.94 & 1.52 & 1.66 & 1.85\\ \cline{2-7}
\multicolumn{1}{||l|}{}  & $R^\infty_{\Delta t}$ & - & 0.55 & 0.64 & 0.97 & 0.77
\\  \hline 
\end{tabular}
\caption{Error $E_{u,h}$, $E_{\boldsymbol{\sigma},h}$, $E^\infty_h$ and rate of convergence $R_{u,h}$, $R_{u,\Delta t}$, $R_{\boldsymbol{\sigma},h}$, $R_{\boldsymbol{\sigma},\Delta t}$, $R^\infty_h$ and $R^\infty_{\Delta t}$ of the proposed method for Example~\ref{VariablePIDEMFEM2_f10}.}\label{2dpide_f101}
\end{table}

\end{example}

For the following three examples: Examples \ref{VariableceffPDE_newmesh}-\ref{VariablePIDEMFEM2_newmesh},
however, step length restriction is not satisfied, but we still obtain the computed orders of convergence which confirm 
our theoretical results and therefore, this behaviour needs further investigation.

\begin{example}\label{VariableceffPDE_newmesh} Now consider a time-fractional PDE (\ref{pide}) over the domain $\Omega\times(0,T] = (-1,1)^2\times (0,1]$ with  coefficients $\textbf{A}(\boldsymbol{x},t)=\begin{bmatrix} 2-\cos(t) & x_1x_2 \\ x_1x_2 & 2-\sin(t) \\ \end{bmatrix},\; \textbf{b}(\boldsymbol{x},t) =\begin{bmatrix} 1 + 2x_1x_2\\ 1 + x_1x_2\\ \end{bmatrix}$, $c(\boldsymbol{x},t) =1 -\sin(t)$, and $\lambda =0$. The initial condition $u_0$ and the source term $f$ are chosen to align with the exact solution $ u(\boldsymbol{x},t)= \sin(\pi x_1)\sin(\pi x_2)(t^{\alpha}+t^3).$ For this example, the computational convergence rate is provided in Table~\ref{2DPDE1_22}  in the temporal direction. The computed order of convergence confirms the theoretical order of convergence.

\begin{table}[H]
\centering
\begin{tabular}{||l|l|l|l|l|l|l||}
\hline
\multicolumn{2}{||l|}{$N$} & 4 & 8 & 16 & 32 & 64  \\ \hline \hline
\multicolumn{1}{||l|}{\multirow{6}{*}{$\alpha$=0.2}}
& $\widetilde{\Delta t}$& \multicolumn{5}{|l||}  {\hspace{3.7cm}1.830e-08}
\\ \cline{2-7} 
\multicolumn{1}{||l|}{}  &
$\Delta t$
& 9.270e-01 & 7.033e-01 & 4.441e-01 & 2.509e-01 & 1.335e-01
\\ \cline{2-7} 
\multicolumn{1}{||l|}{}  & $E_{u,h}$ & 5.036e-01 & 1.372e-01 & 3.517e-02 & 9.661e-03 & 2.910e-03\\ \cline{2-7} 
\multicolumn{1}{||l|}{}  & $R_{u,h}$ & - & 1.88 & 1.96 & 1.99 & 1.99\\ \cline{2-7}
\multicolumn{1}{||l|}{}  & $R_{u,\Delta t}$ & - & 4.71 & 2.96 & 2.26 & 1.90\\ \cline{2-7}
 & $E_{\boldsymbol{\sigma},h}$ & 
1.891e+0 & 5.049e-01 & 1.288e-01 & 3.56e-02 & 1.079e-02\\ \cline{2-7} 
\multicolumn{1}{||l|}{}  & $R_{\boldsymbol{\sigma},h}$ & - & 1.91 & 1.97 & 1.98 & 1.98\\ \cline{2-7}
\multicolumn{1}{||l|}{}  & $R_{\boldsymbol{\sigma}, \Delta t}$ & - & 4.78 & 2.97 & 2.25 & 1.89\\ \cline{2-7}
 & $E^\infty_h$ & 
1.247e+0 & 3.991e-01 & 1.044e-01 & 2.874e-02 & 8.654e-03\\ \cline{2-7} 
\multicolumn{1}{||l|}{}  & $R^\infty_h$ & - & 1.64 & 1.93 & 1.98 & 1.99\\ \cline{2-7}
\multicolumn{1}{||l|}{}  & $R^\infty_{\Delta t}$ & - & 4.13 & 2.92 & 2.26 & 1.90
\\ \hline \hline
\multicolumn{1}{||l|}{\multirow{6}{*}{$\alpha$=0.5}}    & $\widetilde{\Delta t}$&    \multicolumn{5}{|l||} {\hspace{3.7cm}8.060e-04}
\\ \cline{2-7} 
\multicolumn{1}{||l|}{}  & $\Delta t$
& 5.901e-01 & 3.389e-01 & 1.813e-01 & 9.373e-02 & 4.764e-02
\\ \cline{2-7} 
\multicolumn{1}{||l|}{}  & $E_{u,h}$ & 5.030e-01 & 1.949e-01 & 7.826e-02 & 2.997e-02 & 9.623e-03\\ \cline{2-7} 
\multicolumn{1}{||l|}{}  & $R_{u,h}$ & - & 1.86 & 1.94 & 1.98 & 1.99\\ \cline{2-7}
\multicolumn{1}{||l|}{}  & $R_{u,\Delta t}$ & - & 1.71 & 1.46 & 1.45 & 1.68\\ \cline{2-7}
 & $E_{\boldsymbol{\sigma},h}$ & 
1.885e+0 & 7.205e-01 & 2.867e-01 & 1.095e-01 & 3.521e-02\\ \cline{2-7} 
\multicolumn{1}{||l|}{}  & $R_{\boldsymbol{\sigma},h}$ & - & 1.88 & 1.96 & 1.98 & 1.99\\ \cline{2-7}
\multicolumn{1}{||l|}{}  & $R_{\boldsymbol{\sigma}, \Delta t}$ & - & 1.74 & 1.47 & 1.46 & 1.68\\ \cline{2-7}
 & $E^\infty_h$ & 
1.256e+0 & 5.417e-01 & 2.193e-01 & 9.087e-02 & 2.822e-02\\ \cline{2-7} 
\multicolumn{1}{||l|}{}  & $R^\infty_h$ & - & 1.65 & 1.92 & 1.82 & 2.05\\ \cline{2-7}
\multicolumn{1}{||l|}{}  & $R^\infty_{\Delta t}$ & - & 1.52 & 1.45 & 1.34 & 1.73
\\ \hline \hline
\multicolumn{1}{||l|}{\multirow{6}{*}{$\alpha$=0.8}}          & $\widetilde{\Delta t}$& 
 \multicolumn{5}{|l||} {\hspace{3.7cm}1.163e-02}
\\ \cline{2-7} 
\multicolumn{1}{||l|}{}  & $\Delta t$
& 3.689e-01 & 1.923e-01 & 9.810e-02 & 4.952e-02 & 2.488e-02\\ \cline{2-7} 
\multicolumn{1}{||l|}{}  & $E_{u,h}$ &
1.245e+0 & 5.029e-01 & 1.949e-01 & 7.825e-02 & 3.512e-02\\ \cline{2-7} 
\multicolumn{1}{||l|}{}  & $R_{u,h}$ & - & 2.24 & 1.86 & 1.94 & 1.98\\ \cline{2-7}
\multicolumn{1}{||l|}{}  & $R_{u,\Delta t}$ & - & 1.39 & 1.41 & 1.34 & 1.16\\ \cline{2-7}
 & $E_{\boldsymbol{\sigma},h}$ & 
4.740e+0 & 1.886e+0 & 7.206e-01 & 2.865e-01 & 1.283e-01\\ \cline{2-7} 
\multicolumn{1}{||l|}{}  & $R_{\boldsymbol{\sigma},h}$ & - & 2.27 & 1.88 & 1.96 & 1.98\\ \cline{2-7}
\multicolumn{1}{||l|}{}  & $R_{\boldsymbol{\sigma}, \Delta t}$ & - & 1.42 & 1.43 & 1.35 & 1.17\\ \cline{2-7}
 & $E^\infty_h$ & 
1.740e+0 & 1.251e+0 & 5.393e-01 & 2.187e-01 & 1.031e-01\\ \cline{2-7} 
\multicolumn{1}{||l|}{}  & $R^\infty_h$ & - & 0.81 & 1.65 & 1.92 & 1.85\\ \cline{2-7}
\multicolumn{1}{||l|}{}  & $R^\infty_{\Delta t}$ & - & 0.51 & 1.25 & 1.32 & 1.09
\\  \hline
\hline\multicolumn{1}{||l|}{\multirow{6}{*}{$\alpha$=0.99}}         & $\widetilde{\Delta t}$& \multicolumn{5}{|l||} {\hspace{3.7cm}2.723e-02}
\\ \cline{2-7} 
\multicolumn{1}{||l|}{}  & $\Delta t$
& 2.754e-01 & 1.389e-01 & 6.974e-02 & 3.493e-02 & 1.748e-02\\ \cline{2-7} 
 & $E_{u,h}$ & 
1.243e+0 & 5.025e-01 & 2.973e-01 & 1.372e-01 & 7.825e-02\\ \cline{2-7} 
\multicolumn{1}{||l|}{}  & $R_{u,h}$ & - & 2.23 & 1.82 & 1.91 & 1.95\\ \cline{2-7}
\multicolumn{1}{||l|}{}  & $R_{u,\Delta t}$ & - & 1.32 & 0.76 & 1.12 & 0.81 \\ \cline{2-7}
 & $E_{\boldsymbol{\sigma},h}$ & 
4.729e+0 & 1.882e+0 & 1.107e+0 & 5.047e-01 & 2.865e-01\\ \cline{2-7} 
\multicolumn{1}{||l|}{}  & $R_{\boldsymbol{\sigma},h}$ & - & 2.27 & 1.85 & 1.94 & 1.97\\ \cline{2-7}
\multicolumn{1}{||l|}{}  & $R_{\boldsymbol{\sigma}, \Delta t}$ & - & 1.35 & 0.77 & 1.14 & 0.82 \\ \cline{2-7}
 & $E^\infty_h$ & 
1.734e+0 & 1.255e+0 & 7.276e-01 & 3.982e-01 & 2.185e-01\\ \cline{2-7} 
\multicolumn{1}{||l|}{}  & $R^\infty_h$ & - & 0.80 & 1.90 & 1.49 & 2.09\\ \cline{2-7}
\multicolumn{1}{||l|}{}  & $R^\infty_{\Delta t}$ & - & 0.47 & 0.79 & 0.87 & 0.87
\\  \hline 
\end{tabular}
\caption{Error $E_{u,h}$, $E_{\boldsymbol{\sigma},h}$, $E^\infty_h$ and rate of convergence $R_{u,h}$, $R_{u,\Delta t}$, $R_{\boldsymbol{\sigma},h}$, $R_{\boldsymbol{\sigma},\Delta t}$, $R^\infty_h$ and $R^\infty_{\Delta t}$ of the proposed method for Example~\ref{VariableceffPDE_newmesh}.}\label{2DPDE1_22}
\end{table}

\end{example}

\begin{example}\label{VariableceffPDEMFEM2} In this example, we examine a two-dimensional time-fractional PDE (\ref{pide}) over the domain $\Omega\times(0, T] = (-1,1)^2\times (0,1]$ with $\textbf{A}(\boldsymbol{x},t)=\begin{bmatrix} 
1 & \frac{x_1 x_2 t}{8} \\
 \frac{x_1 x_2 t}{8} & 1 \\
\end{bmatrix},\;
\textbf{b}(\boldsymbol{x},t) =\begin{bmatrix} 
 x_1^2 t\\
 2x_2^2 t\\
\end{bmatrix},\; c(\boldsymbol{x},t) = x_1x_2 t, \;\lambda = 0$ $\forall \boldsymbol{x}=(x_1,\;x_2)\in\Omega,\;t\in(0,1]$. Let $u_0(\boldsymbol{x})= x_1(1-|x_1|)x_2(1-|x_2|)\in \dot{H}^2(\Omega)$ be the initial condition and $f(\boldsymbol{x}, t)=e^{-t}\sin(\pi x_1)\sin(\pi x_2)$ the source term. The numerical results in Table~\ref{2dpdenew22_f21} are consistent with our theoretical findings.

\begin{table}[H]
\centering
\begin{tabular}{||l|l|l|l|l|l|l||}
\hline
\multicolumn{2}{||l|}{$N$} & 4 & 8 & 16 & 32 & 64 \\ \hline \hline
\multicolumn{1}{||l|}{\multirow{6}{*}{$\alpha$=0.2}} 
& $\widetilde{\Delta t}$& \multicolumn{5}{|l||} {\hspace{3.7cm}3.499e-04}
\\ \cline{2-7} 
\multicolumn{1}{||l|}{}  & $\Delta t$
& 9.270e-01 & 7.033e-01 & 4.441e-01 & 2.509e-01 & 1.335e-01
\\ \cline{2-7} 
\multicolumn{1}{||l|}{}  & $E_{u,h}$ & 1.788e-02 & 5.411e-03 & 1.391e-03 & 3.78e-04 & 1.145e-04\\ \cline{2-7} 
\multicolumn{1}{||l|}{}  & $R_{u,h}$ & - & 1.72 & 1.96 & 2.00 & 1.98\\ \cline{2-7}
\multicolumn{1}{||l|}{}  & $R_{u,\Delta t}$ & - & 4.33 & 2.96 & 2.28 & 1.89\\ \cline{2-7}
\multicolumn{1}{||l|}{}   & $E_{\boldsymbol{\sigma},h}$ & 
2.783e-02 & 7.352e-03 & 1.854e-03 & 5.088e-04 & 1.528e-04\\ \cline{2-7} 
\multicolumn{1}{||l|}{}  & $R_{\boldsymbol{\sigma},h}$ & - & 1.92 & 1.99 & 1.99 & 2.00 \\ \cline{2-7}
\multicolumn{1}{||l|}{}  & $R_{\boldsymbol{\sigma}, \Delta t}$ & - & 4.82 & 3.00 & 2.26 & 1.91\\ \cline{2-7}
 \multicolumn{1}{||l|}{}  & $E^\infty_h$ & 
1.838e-02 & 5.705e-03 & 1.451e-03 & 3.913e-04 & 1.214e-04\\ \cline{2-7} 
\multicolumn{1}{||l|}{}  & $R^\infty_h$ & - & 1.69 & 1.97 & 2.01 & 1.94\\ \cline{2-7}
\multicolumn{1}{||l|}{}  & $R^\infty_{\Delta t}$ & - & 4.24 & 2.98 & 2.30 & 1.85
\\ \hline \hline
\multicolumn{1}{||l|}{\multirow{6}{*}{$\alpha$=0.5}} 
&  $\widetilde{\Delta t}$&  \multicolumn{5}{|l||} {\hspace{3.7cm}4.579e-02}
\\ \cline{2-7}  
\multicolumn{1}{||l|}{}  & $\Delta t$
& 5.901e-01 & 3.389e-01 & 1.813e-01 & 9.373e-02 & 4.764e-02
\\ \cline{2-7} 
\multicolumn{1}{||l|}{}  & $E_{u,h}$ & 
1.788e-02 & 7.328e-03 & 3.094e-03 & 1.168e-03 & 3.78e-04\\ \cline{2-7} 
\multicolumn{1}{||l|}{}  & $R_{u,h}$ & - & 1.75 & 1.83 & 2.01 & 1.98\\ \cline{2-7}
\multicolumn{1}{||l|}{}  & $R_{u,\Delta t}$ & - & 1.61 & 1.38 & 1.48 & 1.67\\ \cline{2-7}
\multicolumn{1}{||l|}{}   & $E_{\boldsymbol{\sigma},h}$ & 
2.152e-02 & 8.267e-03 & 3.281e-03 & 1.25e-03 & 4.006e-04\\ \cline{2-7} 
\multicolumn{1}{||l|}{}  & $R_{\boldsymbol{\sigma},h}$ & - & 1.87 & 1.97 & 1.99 & 1.99\\ \cline{2-7}
\multicolumn{1}{||l|}{}  & $R_{\boldsymbol{\sigma}, \Delta t}$ & - & 1.73 & 1.48 & 1.46 & 1.68\\ \cline{2-7}
 \multicolumn{1}{||l|}{}  & $E^\infty_h$ & 
1.455e-02 & 6.21e-03 & 2.475e-03 & 9.915e-04 & 3.053e-04\\ \cline{2-7} 
\multicolumn{1}{||l|}{}  & $R^\infty_h$ & - & 1.67 & 1.96 & 1.88 & 2.06\\ \cline{2-7}
\multicolumn{1}{||l|}{}  & $R^\infty_{\Delta t}$ & - & 1.54 & 1.47 & 1.39 & 1.74
\\ \hline \hline
\multicolumn{1}{||l|}{\multirow{6}{*}{$\alpha$=0.8}} 
& $\widetilde{\Delta t}$& \multicolumn{5}{|l||} {\hspace{3.7cm}1.392e-01}
\\ \cline{2-7} 
\multicolumn{1}{||l|}{}  & $\Delta t$
& 3.689e-01 & 1.923e-01 & 9.810e-02 & 4.952e-02 & 2.488e-02\\ \cline{2-7} 
\multicolumn{1}{||l|}{}  & $E_{u,h}$ & 
4.044e-02 & 1.788e-02 & 7.328e-03 & 3.094e-03 & 1.391e-03\\ \cline{2-7} 
\multicolumn{1}{||l|}{}  & $R_{u,h}$ & - & 2.01 & 1.75 & 1.83 & 1.97\\ \cline{2-7}
\multicolumn{1}{||l|}{}  & $R_{u,\Delta t}$ & - & 1.25 & 1.33 & 1.26 & 1.16\\ \cline{2-7}
\multicolumn{1}{||l|}{}   & $E_{\boldsymbol{\sigma},h}$ & 
5.148e-02 & 1.884e-02 & 7.118e-03 & 2.826e-03 & 1.263e-03\\ \cline{2-7} 
\multicolumn{1}{||l|}{}  & $R_{\boldsymbol{\sigma},h}$ & - & 2.48 & 1.91 & 1.97 & 1.98\\ \cline{2-7}
\multicolumn{1}{||l|}{}  & $R_{\boldsymbol{\sigma}, \Delta t}$ & - & 1.54 & 1.45 & 1.35 & 1.17\\ \cline{2-7}
 \multicolumn{1}{||l|}{}  & $E^\infty_h$ & 
2.203e-02 & 1.274e-02 & 5.346e-03 & 2.134e-03 & 9.888e-04\\ \cline{2-7} 
\multicolumn{1}{||l|}{}  & $R^\infty_h$ & - & 1.35 & 1.70 & 1.95 & 1.90 \\ \cline{2-7}
\multicolumn{1}{||l|}{}  & $R^\infty_{\Delta t}$ & - & 0.84 & 1.29 & 1.34 & 1.12
\\  \hline \hline
\multicolumn{1}{||l|}{\multirow{6}{*}{$\alpha$=0.99}} 
 & $\widetilde{\Delta t}$&  \multicolumn{5}{|l||} {\hspace{3.7cm}1.876e-01}
\\ \cline{2-7} 
\multicolumn{1}{||l|}{}  & $\Delta t$
& 4.044e-02 & 1.788e-02 & 1.164e-02 & 5.411e-03 & 3.094e-03\\ \cline{2-7} 
\multicolumn{1}{||l|}{}  & $R_{u,h}$ & - & 2.01 & 1.49 & 1.89 & 1.94\\ \cline{2-7}
\multicolumn{1}{||l|}{}  & $R_{u,\Delta t}$ & - & 1.19 & 0.62 & 1.11 & 0.81 \\ \cline{2-7}
\multicolumn{1}{||l|}{}   & $E_{\boldsymbol{\sigma},h}$ & 
4.811e-02 & 1.741e-02 & 1.016e-02 & 4.628e-03 & 2.627e-03\\ \cline{2-7} 
\multicolumn{1}{||l|}{}  & $R_{\boldsymbol{\sigma},h}$ & - & 2.51 & 1.87 & 1.94 & 1.97\\ \cline{2-7}
\multicolumn{1}{||l|}{}  & $R_{\boldsymbol{\sigma}, \Delta t}$ & - & 1.48 & 0.78 & 1.14 & 0.82 \\ \cline{2-7}
 \multicolumn{1}{||l|}{}  & $E^\infty_h$ & 
2.083e-02 & 1.183e-02 & 7.294e-03 & 3.579e-03 & 1.977e-03\\ \cline{2-7} 
\multicolumn{1}{||l|}{}  & $R^\infty_h$ & - & 1.39 & 1.68 & 1.76 & 2.06\\ \cline{2-7}
\multicolumn{1}{||l|}{}  & $R^\infty_{\Delta t}$ & - & 0.83 & 0.70 & 1.03 & 0.86
\\  \hline 
\end{tabular}
\caption{Error $E_{u,h}$, $E_{\boldsymbol{\sigma},h}$, $E^\infty_h$ and rate of convergence $R_{u,h}$, $R_{u,\Delta t}$, $R_{\boldsymbol{\sigma},h}$, $R_{\boldsymbol{\sigma},\Delta t}$, $R^\infty_h$ and $R^\infty_{\Delta t}$ of the proposed method for
Example~\ref{VariableceffPDEMFEM2}.}\label{2dpdenew22_f21}
\end{table}

\end{example}

\begin{example}\label{VariablePIDEMFEM2_newmesh} [\cite{TOMAR2024137}, Example~6.5]
Now, we consider the time-fractional PIDE (\ref{pide}) with variable coefficients over the domain $\Omega\times(0,T] = (-1,1)^2\times (0,1]$ with the same initial condition as in Example~\ref{VariableceffPDEMFEM2} and $\textbf{A}(\boldsymbol{x},t)=\begin{bmatrix} 
1 & \frac{x_1 x_2 t}{8} \\
\frac{x_1 x_2 t}{8} & 1
\end{bmatrix},$ $
\textbf{b}(\boldsymbol{x},t) =\begin{bmatrix} 
 x_1^2 t\\
 x_2^2 t\\
\end{bmatrix},\; c(\boldsymbol{x},t) = x_1x_2 t, \;\lambda = \frac{1}{2}\;$, $g(\boldsymbol{x},\boldsymbol{y}) = \frac{1}{2}e^{-\|\boldsymbol{x}-\boldsymbol{y}\|^2}$, and the source term $f(\boldsymbol{x}, t)=e^{-t}\sin(\pi x_1)\sin(\pi x_2)$.
Table~\ref{2dpide_f222} demonstrates that the computational rate is second-order for both variables $u$ and $\boldsymbol{\sigma}$, consistent with our theoretical findings. 

\begin{table}[H]
\centering
\begin{tabular}{||l|l|l|l|l|l|l||}
\hline
\multicolumn{2}{||l|}{$N$} & 4 & 8 & 16 & 32 & 64  \\ \hline \hline
\multicolumn{1}{||l|}{\multirow{6}{*}{$\alpha$=0.2}} 
& $\widetilde{\Delta t}$&  \multicolumn{5}{|l||} {\hspace{3.7cm}2.669e-03}
\\ \cline{2-7} 
\multicolumn{1}{||l|}{}  & $\Delta t$
& 9.270e-01 & 7.033e-01 & 4.441e-01 & 2.509e-01 & 1.335e-01
\\ \cline{2-7} 
\multicolumn{1}{||l|}{} & $E_{u,h}$ & 1.789e-02
& 5.412e-03
& 1.392e-03
& 3.781e-04
& 1.151e-04
  \\ \cline{2-7} 
\multicolumn{1}{||l|}{}                  & $R_{u,h}$ &  -
& 1.72
& 1.96
& 2.00
& 1.98
\\ \cline{2-7} 
\multicolumn{1}{||l|}{}                  & $R_{u,\Delta t}$ &  -
& 4.33
& 2.96
& 2.28
& 1.89
\\ \cline{2-7} 
\multicolumn{1}{||l|}{}                  & $E_{\boldsymbol{\sigma},h}$ & 2.791e-02
& 7.373e-03
& 1.859e-03
& 5.091e-04
& 1.531e-04
\\ \cline{2-7} 
\multicolumn{1}{||l|}{}                  & $R_{\boldsymbol{\sigma},h}$ & - 
& 1.92
& 1.99
& 1.99
& 2.00
\\  \cline{2-7} 
\multicolumn{1}{||l|}{}                  & $R_{\boldsymbol{\sigma}, \Delta t}$ & - 
& 4.82
& 3.00
& 2.27
& 1.91
\\  \cline{2-7}
\multicolumn{1}{||l|}{}                  & $E^\infty_h$ & 1.835e-02
& 5.712e-03
& 1.449e-03
& 3.911e-04
& 1.221e-04
\\ \cline{2-7} 
\multicolumn{1}{||l|}{}                  & $R^\infty_h$ &  - & 1.68
& 1.98
& 2.01
& 1.94
\\ \cline{2-7} 
\multicolumn{1}{||l|}{}                  & $R^\infty_{\Delta t}$ &  - & 4.23
& 2.98
& 2.30
& 1.85
\\ \hline \hline
\multicolumn{1}{||l|}{\multirow{6}{*}{$\alpha$=0.5}} 
& $\widetilde{\Delta t}$&  \multicolumn{5}{|l||} {\hspace{3.7cm}1.032e-01}
\\ \cline{2-7} 
\multicolumn{1}{||l|}{}  & $\Delta t$
& 5.901e-01 & 3.389e-01 & 1.813e-01 & 9.373e-02 & 4.764e-02\\ \cline{2-7} 
\multicolumn{1}{||l|}{}  & $E_{u,h}$ & 
1.788e-02 & 7.328e-03 & 3.094e-03 & 1.168e-03 & 3.78e-04\\ \cline{2-7} 
\multicolumn{1}{||l|}{}  & $R_{u,h}$ & - & 1.75 & 1.83 & 2.01 & 1.98\\ \cline{2-7}
\multicolumn{1}{||l|}{}  & $R_{u,\Delta t}$ & - & 1.61 & 1.38 & 1.48 & 1.67\\ \cline{2-7}
\multicolumn{1}{||l|}{}   & $E_{\boldsymbol{\sigma},h}$ & 
2.143e-02 & 8.261e-03 & 3.271e-03 & 1.246e-03 & 3.996e-04\\ \cline{2-7} 
\multicolumn{1}{||l|}{}  & $R_{\boldsymbol{\sigma},h}$ & - & 1.87 & 1.97 & 1.99 & 1.99\\ \cline{2-7}
\multicolumn{1}{||l|}{}  & $R_{\boldsymbol{\sigma}, \Delta t}$ & - & 1.72 & 1.48 & 1.46 & 1.68\\ \cline{2-7}
 \multicolumn{1}{||l|}{}  & $E^\infty_h$ & 
1.426e-02 & 6.188e-03 & 2.46e-03 & 9.886e-04 & 3.04e-04\\ \cline{2-7} 
\multicolumn{1}{||l|}{}  & $R^\infty_h$ & - & 1.64 & 1.96 & 1.88 & 2.07\\ \cline{2-7}
\multicolumn{1}{||l|}{}  & $R^\infty_{\Delta t}$ & - & 1.51 & 1.47 & 1.38 & 1.74
\\ \hline \hline
\multicolumn{1}{||l|}{\multirow{6}{*}{$\alpha$=0.8}} 
& $\widetilde{\Delta t}$ & \multicolumn{5}{|l||} {\hspace{3.7cm}2.314e-01}
\\ \cline{2-7} 
\multicolumn{1}{||l|}{}  & $\Delta t$
& 3.689e-01 & 1.923e-01 & 9.810e-02 & 4.952e-02 & 2.488e-02\\ \cline{2-7} 
\multicolumn{1}{||l|}{}  & $E_{u,h}$ & 
4.044e-02 & 1.788e-02 & 7.328e-03 & 3.094e-03 & 1.391e-03\\ \cline{2-7} 
\multicolumn{1}{||l|}{}  & $R_{u,h}$ & - & 2.01 & 1.75 & 1.83 & 1.97\\ \cline{2-7}
\multicolumn{1}{||l|}{}  & $R_{u,\Delta t}$ & - & 1.25 & 1.33 & 1.26 & 1.16\\ \cline{2-7}
\multicolumn{1}{||l|}{}   & $E_{\boldsymbol{\sigma},h}$ & 
5.135e-02 & 1.873e-02 & 7.059e-03 & 2.796e-03 & 1.249e-03\\ \cline{2-7} 
\multicolumn{1}{||l|}{}  & $R_{\boldsymbol{\sigma},h}$ & - & 2.49 & 1.91 & 1.97 & 1.99\\ \cline{2-7}
\multicolumn{1}{||l|}{}  & $R_{\boldsymbol{\sigma}, \Delta t}$ & - & 1.55 & 1.45 & 1.35 & 1.17\\ \cline{2-7}
 \multicolumn{1}{||l|}{}  & $E^\infty_h$ & 
2.218e-02 & 1.246e-02 & 5.316e-03 & 2.115e-03 & 9.857e-04\\ \cline{2-7} 
\multicolumn{1}{||l|}{}  & $R^\infty_h$ & - & 1.42 & 1.67 & 1.96 & 1.88\\ \cline{2-7}
\multicolumn{1}{||l|}{}  & $R^\infty_{\Delta t}$ & - & 0.89 & 1.27 & 1.35 & 1.11
\\  \hline \hline
\multicolumn{1}{||l|}{\multirow{6}{*}{$\alpha$=0.99}} 
& $\widetilde{\Delta t}$ & \multicolumn{5}{|l||} {\hspace{3.7cm}2.828e-01}
\\ \cline{2-7} 
\multicolumn{1}{||l|}{}  & $\Delta t$
& 2.754e-01 & 1.389e-01 & 6.974e-02 & 3.493e-02 & 1.748e-02\\ \cline{2-7} 
\multicolumn{1}{||l|}{}  & $E_{u,h}$ & 
4.044e-02 & 1.788e-02 & 1.164e-02 & 5.411e-03 & 3.094e-03\\ \cline{2-7} 
\multicolumn{1}{||l|}{}  & $R_{u,h}$ & - & 2.01 & 1.49 & 1.89 & 1.94\\ \cline{2-7}
\multicolumn{1}{||l|}{}  & $R_{u,\Delta t}$ & - & 1.19 & 0.62 & 1.11 & 0.81\\ \cline{2-7}
\multicolumn{1}{||l|}{}   & $E_{\boldsymbol{\sigma},h}$ & 
4.789e-02 & 1.73e-02 & 1.005e-02 & 4.556e-03 & 2.581e-03\\ \cline{2-7} 
\multicolumn{1}{||l|}{}  & $R_{\boldsymbol{\sigma},h}$ & - & 2.51 & 1.89 & 1.95 & 1.98\\ \cline{2-7}
\multicolumn{1}{||l|}{}  & $R_{\boldsymbol{\sigma}, \Delta t}$ & - & 1.49 & 0.79 & 1.14 & 0.82 \\ \cline{2-7}
 \multicolumn{1}{||l|}{}  & $E^\infty_h$ & 
2.092e-02 & 1.154e-02 & 6.971e-03 & 3.571e-03 & 1.956e-03\\ \cline{2-7} 
\multicolumn{1}{||l|}{}  & $R^\infty_h$ & - & 1.47 & 1.75 & 1.65 & 2.09\\ \cline{2-7}
\multicolumn{1}{||l|}{}  & $R^\infty_{\Delta t}$ & - & 0.87 & 0.73 & 0.97 & 0.87
\\  \hline 
\end{tabular}
\caption{Error $E_{u,h}$, $E_{\boldsymbol{\sigma},h}$, $E^\infty_h$ and rate of convergence $R_{u,h}$, $R_{u,\Delta t}$, $R_{\boldsymbol{\sigma},h}$, $R_{\boldsymbol{\sigma},\Delta t}$, $R^\infty_h$ and $R^\infty_{\Delta t}$ of the proposed method for Example~\ref{VariablePIDEMFEM2_newmesh}.}\label{2dpide_f222}
\end{table}

\end{example}

For the following two examples, although  initial data is in $H^1_0,$  the step length restriction is satisfied.

\begin{example}\label{GTFBS}{(The generalized (time-fractional) Black-Scholes model. \cite{MR1743405, MR4749244, sawangtong2018analytical})}\label{gbsm} Consider the pricing of European style basket put option $P$ under the following generalized (time-fractional) Black-Scholes model
\begin{align}\label{gbsmeq1}
   &D^\alpha_\tau P + \frac{1}{2}\sigma_1^2 S_1^2 \frac{\partial^2 P}{\partial S_1^2} + \rho\sigma_1 \sigma_2 S_1 S_2 \frac{\partial^2 P}{\partial S_1\partial S_2} + \frac{1}{2}\sigma_2^2 S_2^2 \frac{\partial^2 P}{\partial S_2^2}  +rS_1\frac{\partial P}{\partial S_1}+rS_2\frac{\partial P}{\partial S_2} - rP=0,\; (S_1,S_2,\tau)\in\mathbb{R}_{+}^2\times[0,T),\\
  \label{gbsmeq2} & P(S_1, S_2,T)=\max{\left( K-\frac{S_1 + S_2}{2},0\right)}, \; (S_1,S_2)\in\mathbb{R}_{+}^2,
\end{align}
where $D^\alpha_\tau P,~0 <\alpha\leq1$, represents the modified right Riemann-Liouville derivative, defined by
\begin{align*}
&D^\alpha_\tau P:=\begin{cases}
\displaystyle \frac{1}{\Gamma(1-\alpha)}\frac{d}{d\tau}\int_{\tau}^
{T}\frac{P(\cdot,\eta) - P(\cdot,T)}{(\eta-\tau)^{\alpha}}d\eta &~:~0<\alpha<1,\\[8pt]
\displaystyle\frac{\partial P}{\partial \tau}&~:~\alpha = 1.
\end{cases}
\end{align*}
$\rho$ denotes the correlation between $i^{th}$ and $j^{th}$ underlying stock prices $S_i$ and $S_j$;  $\sigma_i$ represents the volatility of the asset $S_i$; and
$r$ is the risk-free rate of return. Note that the above generalized Black-Scholes model reduces to the standard Black-Scholes model when $\alpha\to 1$. By using the transformations $t:=T-\tau$, $S_i:= Ke^{x_i},\;i=1,2,$ and $V(x_1,x_2,t):= \frac{1}{K} P(S_1, S_2, \tau)$, the above degenerate final value problem (\ref{gbsmeq1}--\ref{gbsmeq2}) can be transformed to the following non-degenerate initial value problem
\begin{align}\label{transformedgbsm1}
   \begin{cases}      
   & \frac{\partial^{\alpha} V}{\partial t^{\alpha}} - \mathcal{L}V =0 \quad \text{in}\quad \mathbb{R}^2\times J,\\
   &V(x_1,x_2,0) =\max{\left( 1-\frac{1}{2}(e^{x_1} + e^{x_2}),0\right)},\; (x_1,x_2)\in\mathbb{R}^2,
   \end{cases}
\end{align}
where $\frac{\partial^{\alpha} V}{\partial t^{\alpha}}$ is the Caputo fractional derivative of order $\alpha$ and 
\begin{align}
   &\mathcal{L}V :=~ \frac{1}{2}\sigma_1^2  \frac{\partial^2 V}{\partial x_1^2} + \rho\sigma_1 \sigma_2  \frac{\partial^2 V}{\partial x_1\partial x_2} + \frac{1}{2}\sigma_2^2 \frac{\partial^2 V}{\partial x_2^2}  +\left(r-\frac{1}{2}\sigma_1^2\right)\frac{\partial V}{\partial x_1}+\left(r-\frac{1}{2}\sigma_2^2\right)\frac{\partial V}{\partial x_2} - rV.\nonumber
\end{align}
For computational purposes, the problem (\ref{transformedgbsm1}) is localized on $\Omega:=(-L,L)^2,\;L>0,$ as follows
\begin{align*}
    & \frac{\partial^{\alpha} u}{\partial t^{\alpha}} - \mathcal{L}u =0 \; \text{ in }\Omega\times(0,T],\\
    &u(x_1,x_2,0) = \max{(1-\frac{1}{2}(e^{x_1}+e^{x_2}),0)},\;\forall (x_1,x_2)\in\Omega,\\
    &u(x_1,x_2,t) =  \frac{1}{2}\Big(1-0.5(e^{x_1} + e^{x_2}) + \Big((\varepsilon t)^2+ \big(1-0.5(e^{x_1} + e^{x_2})\big)^2 \Big)^{1/2}\Big),\;\forall (x_1,x_2,t)\in\partial\Omega\times [0,T], 
\end{align*}
where $u(x_1, x_2,t)\approx V(x_2,x_2,t)$. The model parameters are as follows:
\begin{align*}
 & T=1,\; \sigma_1 = 0.2 = \sigma_2,\; \rho= 0.5,\; r=0.06,\;L=1,\;\varepsilon=1.
\end{align*}

In this example, as well as in the following example \ref{GTFM}, the initial data is in $H^1_0(\Omega),$ but not in $H_0^1(\Omega)\cap H^2(\Omega)$ and therefore, the solution is expected to more singularity at $t=0.$ As a result, it is expected to increase the power of $t$ appeared in the error estimate.  In Table~\ref{2dBS22_f22}, the error $E_{u,h}$, $E_{\boldsymbol{\sigma},h}$, $E^\infty_h$ has been calculated by using the formula
\begin{align} 
        \begin{cases}
       E_{u,h} = \max\limits_{1 \leq n \leq N}t\|u_{h}^n - \widetilde{u}_h(t_n)\|, \\
        E_{\boldsymbol{\sigma},h} = \max\limits_{1 \leq n \leq N}t^{1+\alpha/2}\|\boldsymbol{\sigma}_{h}^n - \widetilde{\boldsymbol{\sigma}}_h(t_n)\|,\;\text{ and} \\
        E_{u,h}^\infty = \max\limits_{\boldsymbol{x}_j\in\mathcal{N},\;1 \leq n \leq N}\;t_n^{1+\frac{\alpha}{2}}|u_h^n(\boldsymbol{x}_j) - \widetilde{u}_h(\boldsymbol{x}_j,\;t_n)|.
        \end{cases}\label{modcomputerr}
\end{align}

\begin{table}[H]
\centering
\begin{tabular}{||l|l|l|l|l|l|l||}
\hline
\multicolumn{2}{||l|}{$N$} & 4 & 8 & 16 & 32 & 64  \\ \hline \hline
\multicolumn{1}{||l|}{\multirow{6}{*}{$\alpha$=0.2}} 
& $\widetilde{\Delta t}$ & \multicolumn{5}{|l||} {\hspace{3.7cm}5.662e+03}
\\ \cline{2-7} 
\multicolumn{1}{||l|}{}  & $\Delta t$
& 9.270e-01 & 7.033e-01 & 4.441e-01 & 2.509e-01 & 1.335e-01\\ \cline{2-7} 
\multicolumn{1}{||l|}{}  & $E_{u,h}$ & 
6.033e-02 & 2.931e-02 & 9.711e-03 & 2.841e-03 & 8.796e-04\\ \cline{2-7} 
\multicolumn{1}{||l|}{}  & $R_{u,h}$ & - & 1.04 & 1.59 & 1.89 & 1.95\\ \cline{2-7}
\multicolumn{1}{||l|}{}  & $R_{u,\Delta t}$ & - & 2.61 & 2.40 & 2.15 & 1.86\\ \cline{2-7}
\multicolumn{1}{||l|}{}   & $E_{\boldsymbol{\sigma},h}$ & 
1.009e-02 & 4.071e-03 & 1.299e-03 & 4.155e-04 & 1.357e-04\\ \cline{2-7} 
\multicolumn{1}{||l|}{}  & $R_{\boldsymbol{\sigma},h}$ & - & 1.31 & 1.65 & 1.75 & 1.86\\ \cline{2-7}
\multicolumn{1}{||l|}{}  & $R_{\boldsymbol{\sigma}, \Delta t}$ & - & 3.29 & 2.48 & 2.00 & 1.77\\ \cline{2-7}
 \multicolumn{1}{||l|}{}  & $E^\infty_h$ & 
2.179e-01 & 1.213e-01 & 5.297e-02 & 1.813e-02 & 5.603e-03\\ \cline{2-7} 
\multicolumn{1}{||l|}{}  & $R^\infty_h$ & - & 0.845 & 1.2 & 1.65 & 1.95\\ \cline{2-7}
\multicolumn{1}{||l|}{}  & $R^\infty_{\Delta t}$ & - & 2.12 & 1.80 & 1.88 & 1.86
\\ \hline \hline
\multicolumn{1}{||l|}{\multirow{6}{*}{$\alpha$=0.5}} 
& $\widetilde{\Delta t}$ & \multicolumn{5}{|l||} {\hspace{3.7cm}3.502e+01}
\\ \cline{2-7} 
\multicolumn{1}{||l|}{}  & $\Delta t$
& 5.901e-01 & 3.389e-01 & 1.813e-01 & 9.373e-02 & 4.764e-02\\ \cline{2-7} 
\multicolumn{1}{||l|}{}  & $E_{u,h}$ & 
6.189e-02 & 3.623e-02 & 1.76e-02 & 7.401e-03 & 2.474e-03\\ \cline{2-7} 
\multicolumn{1}{||l|}{}  & $R_{u,h}$ & - & 1.05 & 1.54 & 1.79 & 1.92\\ \cline{2-7}
\multicolumn{1}{||l|}{}  & $R_{u,\Delta t}$ & - & 0.97 & 1.15 & 1.31 & 1.62\\ \cline{2-7}
\multicolumn{1}{||l|}{}   & $E_{\boldsymbol{\sigma},h}$ & 
9.213e-03 & 4.444e-03 & 2.001e-03 & 8.427e-04 & 3.061e-04\\ \cline{2-7} 
\multicolumn{1}{||l|}{}  & $R_{\boldsymbol{\sigma},h}$ & - & 1.43 & 1.70 & 1.78 & 1.77\\ \cline{2-7}
\multicolumn{1}{||l|}{}  & $R_{\boldsymbol{\sigma}, \Delta t}$ & - & 1.32 & 1.27 & 1.31 & 1.50 \\ \cline{2-7}
 \multicolumn{1}{||l|}{}  & $E^\infty_h$ & 
2.177e-01 & 1.437e-01 & 7.917e-02 & 3.778e-02 & 1.445e-02\\ \cline{2-7} 
\multicolumn{1}{||l|}{}  & $R^\infty_h$ & - & 0.812 & 1.27 & 1.52 & 1.68\\ \cline{2-7}
\multicolumn{1}{||l|}{}  & $R^\infty_{\Delta t}$ & - & 0.75 & 0.95 & 1.12 & 1.42
\\ \hline \hline
\multicolumn{1}{||l|}{\multirow{6}{*}{$\alpha$=0.8}} 
& $\widetilde{\Delta t}$&  \multicolumn{5}{|l||} {\hspace{3.7cm}8.831e+0}
\\ \cline{2-7} 
\multicolumn{1}{||l|}{}  & $\Delta t$
& 3.689e-01 & 1.923e-01 & 9.810e-02 & 4.952e-02 & 2.488e-02\\ \cline{2-7} 
\multicolumn{1}{||l|}{}  & $E_{u,h}$ & 
9.326e-02 & 6.598e-02 & 3.549e-02 & 1.571e-02 & 7.235e-03\\ \cline{2-7} 
\multicolumn{1}{||l|}{}  & $R_{u,h}$ & - & 0.85 & 1.21 & 1.73 & 1.91\\ \cline{2-7}
\multicolumn{1}{||l|}{}  & $R_{u,\Delta t}$ & - & 0.53 & 0.92 & 1.19 & 1.13\\ \cline{2-7}
\multicolumn{1}{||l|}{}   & $E_{\boldsymbol{\sigma},h}$ & 
1.241e-02 & 7.997e-03 & 3.259e-03 & 1.309e-03 & 6.307e-04\\ \cline{2-7} 
\multicolumn{1}{||l|}{}  & $R_{\boldsymbol{\sigma},h}$ & - & 1.09 & 1.76 & 1.94 & 1.80 \\ \cline{2-7}
\multicolumn{1}{||l|}{}  & $R_{\boldsymbol{\sigma}, \Delta t}$ & - & 0.68 & 1.33 & 1.33 & 1.06\\ \cline{2-7}
 \multicolumn{1}{||l|}{}  & $E^\infty_h$ & 
2.917e-01 & 2.165e-01 & 1.383e-01 & 6.725e-02 & 3.474e-02\\ \cline{2-7} 
\multicolumn{1}{||l|}{}  & $R^\infty_h$ & - & 0.735 & 0.88 & 1.53 & 1.63\\ \cline{2-7}
\multicolumn{1}{||l|}{}  & $R^\infty_{\Delta t}$ & - & 0.46 & 0.67 & 1.06 & 0.96
\\  \hline \hline
\multicolumn{1}{||l|}{\multirow{6}{*}{$\alpha$=0.99}} 
& $\widetilde{\Delta t}$& \multicolumn{5}{|l||} { \hspace{3.7cm}5.364e+0}
\\ \cline{2-7} 
\multicolumn{1}{||l|}{}  & $\Delta t$
& 2.754e-01 & 1.389e-01 & 6.974e-02 & 3.493e-02 & 1.748e-02\\ \cline{2-7} 
\multicolumn{1}{||l|}{}  & $E_{u,h}$ & 
9.721e-02 & 7.034e-02 & 5.047e-02 & 2.514e-02 & 1.429e-02\\ \cline{2-7} 
\multicolumn{1}{||l|}{}  & $R_{u,h}$ & - & 0.80 & 1.15 & 1.72 & 1.96\\ \cline{2-7}
\multicolumn{1}{||l|}{}  & $R_{u,\Delta t}$ & - & 0.47 & 0.48 & 1.01 & 0.82 \\ \cline{2-7}
\multicolumn{1}{||l|}{}   & $E_{\boldsymbol{\sigma},h}$ & 
1.205e-02 & 7.532e-03 & 4.218e-03 & 1.643e-03 & 1.001e-03\\ \cline{2-7} 
\multicolumn{1}{||l|}{}  & $R_{\boldsymbol{\sigma},h}$ & - & 1.16 & 2.02 & 2.32 & 1.72\\ \cline{2-7}
\multicolumn{1}{||l|}{}  & $R_{\boldsymbol{\sigma}, \Delta t}$ & - & 0.69 & 0.84 & 1.36 & 0.72 \\ \cline{2-7}
 \multicolumn{1}{||l|}{}  & $E^\infty_h$ & 
3.014e-01 & 2.27e-01 & 1.749e-01 & 9.668e-02 & 5.647e-02\\ \cline{2-7} 
\multicolumn{1}{||l|}{}  & $R^\infty_h$ & - & 0.70 & 0.91 & 1.46 & 1.87\\ \cline{2-7}
\multicolumn{1}{||l|}{}  & $R^\infty_{\Delta t}$ & - & 0.41 & 0.38 & 0.86 & 0.78
\\  \hline 
\end{tabular}
\caption{Error $E_{u,h}$, $E_{\boldsymbol{\sigma},h}$, $E^\infty_h$ and rate of convergence $R_{u,h}$, $R_{u,\Delta t}$, $R_{\boldsymbol{\sigma},h}$, $R_{\boldsymbol{\sigma},\Delta t}$, $R^\infty_h$ and $R^\infty_{\Delta t}$ of the proposed method for Example~\ref{GTFBS}.}\label{2dBS22_f22}
\end{table}

\end{example}

\begin{example}\label{GTFM}{(The generalized (time-fractional) Merton's model.\cite{mohapatra2024analytical})}\label{gmm} Finally, we consider the pricing of European style basket put option $P$ under the following generalized (time-fractional) Merton's model
\begin{align}
   &\nonumber D^\alpha_\tau P + \frac{1}{2}\sigma_1^2 S_1^2 \frac{\partial^2 P}{\partial S_1^2} + \rho\sigma_1 \sigma_2 S_1 S_2 \frac{\partial^2 P}{\partial S_1\partial S_2} + \frac{1}{2}\sigma_2^2 S_2^2 \frac{\partial^2 P}{\partial S_2^2}  +rS_1\frac{\partial P}{\partial S_1}+rS_2\frac{\partial P}{\partial S_2} - rP\\
   & \hspace{5cm}+ \lambda JP= 0,\; (S_1,S_2,\tau)\in\mathbb{R}_{+}^2\times[0,T),\label{gmmeq1}\\
   & P(S_1, S_2,T)=\max{\left( K-\frac{S_1 + S_2}{2},0\right)}, \; (S_1,S_2)\in\mathbb{R}_{+}^2,\label{gmmeq2}
\end{align}
where $\rho$, $\sigma_i,\;i=1,2,$ $r$, and $D^\alpha_\tau P,~0 <\alpha\leq1$, the modified right Riemann-Liouville derivative, are defined in the previous example \ref{GTFBS}. The integral operator $J$ is defined as
\begin{align*}
& J(P(\tau,S)) = \int_{\mathbb{R}^2} (P(Se^z, \tau) -P(S,\tau) - (e^z -1)\cdot S \nabla P)g(z)dz\;\text{ with}\\
&g(z)=\displaystyle\frac{1}{2 \pi \sqrt{\det \Sigma}}e^{-\frac{1}{2}(z-\mu_M)\Sigma^{-1}(z-\mu_M)^T},\;z=(z_1,z_2),\;\mu_M= (\mu_{1M}, \mu_{2M}),\;\text{ and}\\
&\Sigma = \begin{bmatrix}
    \sigma_{1M}^2 & \rho_M \sigma_{1M}\sigma_{2M}\\
    \rho_M \sigma_{1M}\sigma_{2M} & \sigma_{2M}^2
\end{bmatrix}.
\end{align*}
 Note that the above generalized Merton's model reduces to the standard Merton's model when $\alpha\to 1$.
By using the transformations $t:=T-\tau$, $S_i:= Ke^{x_i},\;i=1,2,$ and $V(x_1,x_2,t):= \frac{1}{K} P(S_1, S_2, \tau)$, the above degenerate final value problem (\ref{gmmeq1}--\ref{gmmeq2}) can be transformed to the following non-degenerate initial value problem:

\begin{align}\label{transformedgmm1}
   \begin{cases}      
   & \frac{\partial^{\alpha} V}{\partial t^{\alpha}} - \mathcal{L}V - \lambda \mathcal{I}V =0 \quad \text{in}\quad \mathbb{R}^2\times J,\\
   &V(x_1,x_2,0) =\max{\left( 1-\frac{1}{2}(e^{x_1} + e^{x_2}),0\right)},\; (x_1,x_2)\in\mathbb{R}^2,
   \end{cases}
\end{align}
where $\frac{\partial^{\alpha} V}{\partial t^{\alpha}}$ is the Caputo fractional derivative of order $\alpha$, and 
and 
\begin{align}
   &\mathcal{L}V :=~ \frac{1}{2}\sigma_1^2  \frac{\partial^2 V}{\partial x_1^2} + \rho\sigma_1 \sigma_2  \frac{\partial^2 V}{\partial x_1\partial x_2} + \frac{1}{2}\sigma_2^2 \frac{\partial^2 V}{\partial x_2^2}  +\left(r-\frac{1}{2}\sigma_1^2-\lambda\xi_1\right)\frac{\partial V}{\partial x_1}+\left(r-\frac{1}{2}\sigma_2^2-\lambda\xi_2\right)\frac{\partial V}{\partial x_2} - (r+\lambda)V,\nonumber\\
   &\xi_i = \int_{\mathbb{R}^2}(e^{z_i}-1)g(z)dz = e^{\mu_{iM} + \frac{\sigma_{iM}^2}{2}} -1,\;i=1,2,\;\text{ and}\nonumber\\
   &\mathcal{I}V =\int_{\mathbb{R}^2} V(z,t)g(z-x)dz.\nonumber
\end{align}
For computational purposes, the problem (\ref{transformedgmm1}) is localized on $\Omega:=(-L,L)^2,\;L>0,$ as follows
\begin{align*}
    & \frac{\partial^{\alpha} u}{\partial t^{\alpha}} - \mathcal{L}u - \lambda \mathcal{I}u = f  \; \text{in }\Omega\times(0,T],\\
    &u(x_1,x_2,0) = \max{(1-0.5(e^{x_1}+e^{x_2}),0)}\quad\forall (x_1,x_2)\in\Omega,\\
    &u(x_1,x_2,t) = \frac{1}{2}\Big(1-0.5(e^{x_1} + e^{x_2}) + \Big((\varepsilon t)^2+ \big(1-0.5(e^{x_1} + e^{x_2})\big)^2 \Big)^{1/2}\Big),\;\forall (x_1,x_2,t)\in\partial\Omega\times[0,T],
\end{align*}
where $u(x_1, x_2,t)\approx V(x_2,x_2,t)$,  $\mathcal{I}u =\int_{\Omega} u(z,t)g(z-x)dz$ and $f =\int_{\mathbb{R}\backslash\Omega} u(z,t)g(z-x)dz$. The model parameters are as follows:
\begin{align*}
 & T = 1, \;\sigma_1 = 0.2 = \sigma_2,\; \rho= 0.5,\; r=0.06,\;L=1,\\
 & \sigma_{1M} = 0.15,\; \sigma_{2M} = 0.2,\; \mu_{1M} = -0.10,\; \mu_{2M} = 0.10,\; \varepsilon = 1.
\end{align*}
Numerical results are shown in Table~\ref{2dM222_f2}.

\begin{table}[H]
\centering
\begin{tabular}{||l|l|l|l|l|l|l||}
\hline
\multicolumn{2}{||l|}{$N$} & 4 & 8 & 16 & 32 & 64  \\ \hline \hline
\multicolumn{1}{||l|}{\multirow{6}{*}{$\alpha$=0.2}} 
& $\widetilde{\Delta t}$ & \multicolumn{5}{|l||} {\hspace{3.7cm}1.595e+03}
\\ \cline{2-7} 
\multicolumn{1}{||l|}{}  & $\Delta t$
& 9.270e-01 & 7.033e-01 & 4.441e-01 & 2.509e-01 & 1.335e-01\\ \cline{2-7} 
\multicolumn{1}{||l|}{} & 
 $E_{u,h}$ & 1.422e-01 & 7.516e-02 & 2.371e-02 & 6.931e-03 & 2.159e-03 \\ \cline{2-7}
\multicolumn{1}{||l|}{}  & $R_{u,h}$ & - & 0.92 & 1.66 & 1.89 & 1.94\\ \cline{2-7}
\multicolumn{1}{||l|}{}  & $R_{u,\Delta t}$ & - & 2.31 & 2.50 & 2.15 & 1.85\\ \cline{2-7}
\multicolumn{1}{||l|}{}  & $E_{\boldsymbol{\sigma},h}$ & 2.192e-02 & 1.021e-02 & 3.832e-03 & 1.437e-03 & 5.431e-04 \\ \cline{2-7}
\multicolumn{1}{||l|}{}  & $R_{\boldsymbol{\sigma},h}$ & - & 1.10 & 1.41 & 1.51 & 1.62\\ \cline{2-7}
\multicolumn{1}{||l|}{}  & $R_{\boldsymbol{\sigma}, \Delta t}$ & - & 2.76 & 2.13 & 1.72 & 1.54 \\ \cline{2-7}
\multicolumn{1}{||l|}{}  & $E^\infty_h$ & 4.111e-01 & 2.614e-01 & 1.238e-01 & 4.343e-02 & 1.572e-02 \\ \cline{2-7}
\multicolumn{1}{||l|}{}  & $R^\infty_h$ & - & 0.65 & 1.08 & 1.61 & 1.69\\ \cline{2-7}
\multicolumn{1}{||l|}{}  & $R^\infty_{\Delta t}$ & - & 1.64 & 1.63 & 1.83 & 1.61
\\ \hline \hline
\multicolumn{1}{||l|}{\multirow{6}{*}{$\alpha$=0.5}} 
& $\widetilde{\Delta t}$ & \multicolumn{5}{|l||} {\hspace{3.7cm}2.110e+01}
\\ \cline{2-7} 
\multicolumn{1}{||l|}{}  & $\Delta t$
& 5.9e-01 & 3.389e-01 & 1.813e-01 & 9.373e-02 & 4.764e-02\\ \cline{2-7} 
\multicolumn{1}{||l|}{}  & $E_{u,h}$ & 
1.428e-01 & 8.026e-02 & 3.963e-02 & 1.694e-02 & 5.746e-03\\ \cline{2-7} 
\multicolumn{1}{||l|}{}  & $R_{u,h}$ & - & 1.13 & 1.50 & 1.75 & 1.90\\ \cline{2-7}
\multicolumn{1}{||l|}{}  & $R_{u,\Delta t}$ & - & 1.04 & 1.13 & 1.29 & 1.60\\ \cline{2-7}
\multicolumn{1}{||l|}{}   & $E_{\boldsymbol{\sigma},h}$ & 
1.894e-02 & 1.051e-02 & 5.356e-03 & 2.572e-03 & 1.079e-03\\ \cline{2-7} 
\multicolumn{1}{||l|}{}  & $R_{\boldsymbol{\sigma},h}$ & - & 1.15 & 1.43 & 1.51 & 1.52\\ \cline{2-7}
\multicolumn{1}{||l|}{}  & $R_{\boldsymbol{\sigma}, \Delta t}$ & - & 1.06 & 1.08 & 1.11 & 1.28\\ \cline{2-7}
 \multicolumn{1}{||l|}{}  & $E^\infty_h$ & 
3.88e-01 & 2.97e-01 & 1.685e-01 & 8.503e-02 & 3.094e-02\\ \cline{2-7} 
\multicolumn{1}{||l|}{}  & $R^\infty_h$ & - & 0.52 & 1.21 & 1.41 & 1.77\\ \cline{2-7}
\multicolumn{1}{||l|}{}  & $R^\infty_{\Delta t}$ & - & 0.48 & 0.91 & 1.04 & 1.49
\\ \hline \hline
\multicolumn{1}{||l|}{\multirow{6}{*}{$\alpha$=0.8}} 
& $\widetilde{\Delta t}$& \multicolumn{5}{|l||} { \hspace{3.7cm}6.433e+0}
\\ \cline{2-7} 
\multicolumn{1}{||l|}{}  & $\Delta t$
& 3.689e-01 & 1.923e-01 & 9.810e-02 & 4.952e-02 & 2.488e-02\\ \cline{2-7} 
\multicolumn{1}{||l|}{}  & $E_{u,h}$ & 
1.934e-01 & 1.517e-01 & 7.427e-02 & 3.3e-02 & 1.547e-02\\ \cline{2-7} 
\multicolumn{1}{||l|}{}  & $R_{u,h}$ & - & 0.60 & 1.40 & 1.73 & 1.87\\ \cline{2-7}
\multicolumn{1}{||l|}{}  & $R_{u,\Delta t}$ & - & 0.37 & 1.06 & 1.19 & 1.10 \\ \cline{2-7}
\multicolumn{1}{||l|}{}   & $E_{\boldsymbol{\sigma},h}$ & 
2.197e-02 & 1.552e-02 & 7.66e-03 & 3.562e-03 & 1.873e-03\\ \cline{2-7} 
\multicolumn{1}{||l|}{}  & $R_{\boldsymbol{\sigma},h}$ & - & 0.86 & 1.38 & 1.63 & 1.58\\ \cline{2-7}
\multicolumn{1}{||l|}{}  & $R_{\boldsymbol{\sigma}, \Delta t}$ & - & 0.53 & 1.05 & 1.12 & 0.93 \\ \cline{2-7}
 \multicolumn{1}{||l|}{}  & $E^\infty_h$ & 
5.573e-01 & 4.041e-01 & 2.55e-01 & 1.268e-01 & 7.167e-02\\ \cline{2-7} 
\multicolumn{1}{||l|}{}  & $R^\infty_h$ & - & 0.79 & 0.90 & 1.49 & 1.41\\ \cline{2-7}
\multicolumn{1}{||l|}{}  & $R^\infty_{\Delta t}$ & - & 0.49 & 0.68 & 1.02 & 0.83
\\  \hline \hline
\multicolumn{1}{||l|}{\multirow{6}{*}{$\alpha$=0.99}} 
& $\widetilde{\Delta t}$&  \multicolumn{5}{|l||} {\hspace{3.7cm}4.152e+0}
\\ \cline{2-7} 
\multicolumn{1}{||l|}{}  & $\Delta t$
& 2.754e-01 & 1.389e-01 & 6.974e-02 & 3.493e-02 & 1.748e-02\\ \cline{2-7} 
\multicolumn{1}{||l|}{}  & $E_{u,h}$ & 
2.142e-01 & 1.657e-01 & 1.083e-01 & 4.989e-02 & 2.909e-02\\ \cline{2-7} 
\multicolumn{1}{||l|}{}  & $R_{u,h}$ & - & 0.63 & 1.48 & 1.91 & 1.87\\ \cline{2-7}
\multicolumn{1}{||l|}{}  & $R_{u,\Delta t}$ & - & 0.38 & 0.62 & 1.12 & 0.78 \\ \cline{2-7}
\multicolumn{1}{||l|}{}   & $E_{\boldsymbol{\sigma},h}$ & 
2.133e-02 & 1.447e-02 & 9.386e-03 & 4.672e-03 & 2.877e-03\\ \cline{2-7} 
\multicolumn{1}{||l|}{}  & $R_{\boldsymbol{\sigma},h}$ & - & 0.96 & 1.50 & 1.72 & 1.69\\ \cline{2-7}
\multicolumn{1}{||l|}{}  & $R_{\boldsymbol{\sigma}, \Delta t}$ & - & 0.57 & 0.63 & 1.01 & 0.70 \\ \cline{2-7}
 \multicolumn{1}{||l|}{}  & $E^\infty_h$ & 
6.051e-01 & 4.544e-01 & 3.389e-01 & 1.731e-01 & 1.128e-01\\ \cline{2-7} 
\multicolumn{1}{||l|}{}  & $R^\infty_h$ & - & 0.71 & 1.02 & 1.66 & 1.49\\ \cline{2-7}
\multicolumn{1}{||l|}{}  & $R^\infty_{\Delta t}$ & - & 0.42 & 0.43 & 0.97 & 0.62
\\  \hline 
\end{tabular}
\caption{Error $E_{u,h}$, $E_{\boldsymbol{\sigma},h}$, $E^\infty_h$ and rate of convergence $R_{u,h}$, $R_{u,\Delta t}$, $R_{\boldsymbol{\sigma},h}$, $R_{\boldsymbol{\sigma},\Delta t}$, $R^\infty_h$ and $R^\infty_{\Delta t}$ of the proposed method for Example~\ref{GTFM}.}\label{2dM222_f2}
\end{table}

\end{example}

It is observed from examples \ref{GTFBS} and \ref{GTFM} (see tables \ref{2dBS22_f22}, and \ref{2dM222_f2}) that the computational rates of convergence with respect to the above-weighted norms (\ref{modcomputerr}) are almost optimal,  although the initial data $u_0$ is  only in $H^1_0(\Omega)$ with slightly deterioration in the spatial order of convergence in Table 
 \ref{2dM222_f2})  of \ref{GTFM}. Due to nonsmooth initial data, we need more refined spatial mesh to derive  computationally optimal rate in space directions.

The results corresponding to $\alpha = 0.99$ in each table demonstrate the $\alpha$-robustness of the proposed method when $\alpha\to 1^{-}$ and is aligned consistently with established theoretical findings.

Finally, we discuss the time-step constraints highlighted in the main result, Theorem~\ref{L2H1errortheorem}. In the first example, there is no time-step restriction. However, in examples \ref{VariablePIDEMFEM2_f8}, \ref{VariablePIDEMFEM2_f10}, \ref{GTFBS}, and \ref{GTFM}, tables \ref{2dM222_f1}, \ref{2dpide_f10}, \ref{2dBS22_f22}, and \ref{2dM222_f2} reveal only a mild time-step restriction. In contrast, examples \ref{VariableceffPDE_newmesh}, \ref{VariableceffPDEMFEM2}, and \ref{VariablePIDEMFEM2_newmesh} (see tables \ref{2DPDE1_22}, \ref{2dpdenew22_f21}, and \ref{2dpide_f222}, respectively) indicate a stricter time-step constraint. Notably, Table~\ref{2DPDE1_22} demonstrates that, in example \ref{VariableceffPDE_newmesh}, this restriction can become impractical for small values of $\alpha$. Despite this, computational results align with theoretical estimates even when the theoretical time-step condition is not strictly observed in some of the examples. This suggests the possibility of relaxing the time-step restriction in Theorem~\ref{L2H1errortheorem}, potentially by refining the discrete fractional Gr\"{o}nwall inequality or adopting an alternative stability and error analysis approach. Exploring these options could be a direction for future research.
%
\section{Conclusion}\label{section8}
%

This study introduces and analyzes an efficient non-uniform implicit-explicit L1 mixed finite element method (IMEX-L1-MFEM) for a class of time-fractional partial differential and integro-differential equations. Stability and convergence estimates are provided along with optimal error estimates for both the solution and the flux in the $L^{2}$ -norm. All the estimates derived in this article remain valid when $\alpha \rightarrow 1^-$. Numerical experiments support our theoretical findings and demonstrate that the proposed method performs effectively for the class of problems under consideration. The present analysis can be easily generalized to semilinear problems.

\section*{Statements and Declarations}
\subsection*{Ethical Approval} Not Applicable
\subsection*{Data Availability}
The codes during the current study are available from the corresponding author on reasonable request.
\subsection*{Conflict of Interest}
The authors declare that they have no conflict of interest.
\subsection*{Funding}
The work of the  first author was supported by IIT Goa under start up grant project no.  2019/SG/LT/031 and   that  of the second  author again  by  IIT Goa. 

\subsection*{Author Contributions}
All authors contributed equally to prepare this manuscript. All authors read and approved the final manuscript.

\subsection*{Acknowledgments} 
The authors would like to thank the referees for  their constructive comments and suggestions  which lead
to a better version of this paper.

\bibliographystyle{plain}
\bibliography{references.bib}

\begin{thebibliography}{10}

\bibitem{MR0349038}
J.~H. Bramble and V.~Thom\'{e}e.
\newblock Semidiscrete least-squares methods for a parabolic boundary value problem.
\newblock {\em Math. Comp.}, 26:633--648, 1972.

\bibitem{MR1115205}
F.~Brezzi and M.~Fortin.
\newblock {\em Mixed and hybrid finite element methods}, volume~15 of {\em Springer Series in Computational Mathematics}.
\newblock Springer-Verlag, New York, 1991.

\bibitem{MR4246866}
H.~Chen and M.~Stynes.
\newblock Blow-up of error estimates in time-fractional initial-boundary value problems.
\newblock {\em IMA J. Numer. Anal.}, 41(2):974--997, 2021.

\bibitem{Ciarlet1978}
P.~G. Ciarlet.
\newblock {\em The Finite Element Method for Elliptic Problems}.
\newblock North Holland, Amsterdam, 1978.

\bibitem{MR2823474}
D.~Goswami and A.~K. Pani.
\newblock An alternate approach to optimal {$L^2$}-error analysis of semidiscrete {G}alerkin methods for linear parabolic problems with nonsmooth initial data.
\newblock {\em Numer. Funct. Anal. Optim.}, 32(9):946--982, 2011.

\bibitem{MR3914223}
Ahmed~S. H. and J.~E. Mac\'ias-D\'iaz.
\newblock A novel discrete {G}ronwall inequality in the analysis of difference schemes for time-fractional multi-delayed diffusion equations.
\newblock {\em Commun. Nonlinear Sci. Numer. Simul.}, 73:110--119, 2019.

\bibitem{MR4287911}
C.~Huang and M.~Stynes.
\newblock {$\alpha$}-robust error analysis of a mixed finite element method for a time-fractional biharmonic equation.
\newblock {\em Numer. Algorithms}, 87(4):1749--1766, 2021.

\bibitem{MR4402734}
C.~Huang and M.~Stynes.
\newblock A sharp {$\alpha$}-robust {$L^\infty(H^1)$} error bound for a time-fractional {A}llen-{C}ahn problem discretised by the {A}likhanov {$L2-1_\sigma$} scheme and a standard {FEM}.
\newblock {\em J. Sci. Comput.}, 91(2):Paper No. 43, 19, 2022.

\bibitem{MR4290515}
B.~Jin.
\newblock {\em Fractional Differential Equations: An Approach via Fractional Derivatives}, volume 206 of {\em Applied Mathematical Sciences}.
\newblock Springer, Cham, 2021.

\bibitem{MR3335216}
B.~Jin, R.~Lazarov, J.~Pasciak, and Z.~Zhou.
\newblock Error analysis of semidiscrete finite element methods for inhomogeneous time-fractional diffusion.
\newblock {\em IMA J. Numer. Anal.}, 35(2):561--582, 2015.

\bibitem{MR3033018}
B.~Jin, R.~Lazarov, and Z.~Zhou.
\newblock Error estimates for a semidiscrete finite element method for fractional order parabolic equations.
\newblock {\em SIAM J. Numer. Anal.}, 51(1):445--466, 2013.

\bibitem{MR3894161}
B.~Jin, R.~Lazarov, and Z.~Zhou.
\newblock Numerical methods for time-fractional evolution equations with nonsmooth data: a concise overview.
\newblock {\em Comput. Methods Appl. Mech. Engrg.}, 346:332--358, 2019.

\bibitem{MR3957890}
B.~Jin, B.~Li, and Z.~Zhou.
\newblock Subdiffusion with a time-dependent coefficient: analysis and numerical solution.
\newblock {\em Math. Comp.}, 88(319):2157--2186, 2019.

\bibitem{MR4125980}
B.~Jin, B.~Li, and Z.~Zhou.
\newblock Subdiffusion with time-dependent coefficients: improved regularity and second-order time stepping.
\newblock {\em Numer. Math.}, 145(4):883--913, 2020.

\bibitem{MR4572861}
B.~Jin and Z.~Zhou.
\newblock {\em Numerical treatment and analysis of time-fractional evolution equations}, volume 214 of {\em Applied Mathematical Sciences}.
\newblock Springer, Cham, 2023.

\bibitem{MR0610597}
C.~Johnson and V.~Thom\'{e}e.
\newblock Error estimates for some mixed finite element methods for parabolic type problems.
\newblock {\em RAIRO Anal. Num\'{e}r.}, 15(1):41--78, 1981.

\bibitem{MR3633783}
M.~K. Kadalbajoo, L.~P. Tripathi, and A.~Kumar.
\newblock An error analysis of a finite element method with {IMEX}-time semidiscretizations for some partial integro-differential inequalities arising in the pricing of {A}merican options.
\newblock {\em SIAM J. Numer. Anal.}, 55(2):869--891, 2017.

\bibitem{MR3816184}
S.~Karaa.
\newblock Semidiscrete finite element analysis of time fractional parabolic problems: a unified approach.
\newblock {\em SIAM J. Numer. Anal.}, 56(3):1673--1692, 2018.

\bibitem{MR4107214}
S.~Karaa.
\newblock Galerkin type methods for semilinear time-fractional diffusion problems.
\newblock {\em J. Sci. Comput.}, 83(3):Paper No. 46, 22, 2020.

\bibitem{karaa2023mixed}
S.~Karaa, K.~Mustapha, and N.~Ahmed.
\newblock A mixed {FEM} for a time-fractional {Fokker-Planck} model.
\newblock {\em arXiv preprint arXiv:2310.17350}, 2023.

\bibitem{MR3649431}
S.~Karaa, K.~Mustapha, and A.~K. Pani.
\newblock Finite volume element method for two-dimensional fractional subdiffusion problems.
\newblock {\em IMA J. Numer. Anal.}, 37(2):945--964, 2017.

\bibitem{MR3742890}
S.~Karaa, K.~Mustapha, and A.~K. Pani.
\newblock Optimal error analysis of a {FEM} for fractional diffusion problems by energy arguments.
\newblock {\em J. Sci. Comput.}, 74(1):519--535, 2018.

\bibitem{MR3834443}
S.~Karaa and A.~K. Pani.
\newblock Error analysis of a {FVEM} for fractional order evolution equations with nonsmooth initial data.
\newblock {\em ESAIM Math. Model. Numer. Anal.}, 52(2):773--801, 2018.

\bibitem{MR4108629}
S.~Karaa and A.~K. Pani.
\newblock Mixed {FEM} for time-fractional diffusion problems with time-dependent coefficients.
\newblock {\em J. Sci. Comput.}, 83(3):Paper No. 51, 22, 2020.

\bibitem{MR3957889}
N.~Kopteva.
\newblock Error analysis of the {L}1 method on graded and uniform meshes for a fractional-derivative problem in two and three dimensions.
\newblock {\em Math. Comp.}, 88(319):2135--2155, 2019.

\bibitem{kou2002jump}
S.~G. Kou.
\newblock A jump-diffusion model for option pricing.
\newblock {\em Management science}, 48(8):1086--1101, 2002.

\bibitem{MR3814402}
A.~Kubica and M.~Yamamoto.
\newblock Initial-boundary value problems for fractional diffusion equations with time-dependent coefficients.
\newblock {\em Fract. Calc. Appl. Anal.}, 21(2):276--311, 2018.

\bibitem{MR2873249}
Y.~Kwon and Y.~Lee.
\newblock A second-order finite difference method for option pricing under jump-diffusion models.
\newblock {\em SIAM J. Numer. Anal.}, 49(6):2598--2617, 2011.

\bibitem{MR3827604}
Lili L., Boya Z., Xiaoli C., and Zhiyong W.
\newblock Convergence and stability of compact finite difference method for nonlinear time fractional reaction-diffusion equations with delay.
\newblock {\em Appl. Math. Comput.}, 337:144--152, 2018.

\bibitem{MR3870961}
D.~Li, H.~Liao, W.~Sun, J.~Wang, and J.~Zhang.
\newblock Analysis of {$L1$}-{G}alerkin {FEM}s for time-fractional nonlinear parabolic problems.
\newblock {\em Commun. Comput. Phys.}, 24(1):86--103, 2018.

\bibitem{MR3790081}
H.~Liao, D.~Li, and J.~Zhang.
\newblock Sharp error estimate of the nonuniform {L}1 formula for linear reaction-subdiffusion equations.
\newblock {\em SIAM J. Numer. Anal.}, 56(2):1112--1133, 2018.

\bibitem{MR3904430}
H.~Liao, W.~McLean, and J.~Zhang.
\newblock A discrete {G}r\"{o}nwall inequality with applications to numerical schemes for subdiffusion problems.
\newblock {\em SIAM J. Numer. Anal.}, 57(1):218--237, 2019.

\bibitem{MR2349193}
Y.~Lin and C.~Xu.
\newblock Finite difference/spectral approximations for the time-fractional diffusion equation.
\newblock {\em J. Comput. Phys.}, 225(2):1533--1552, 2007.

\bibitem{MR2832607}
W.~McLean.
\newblock Regularity of solutions to a time-fractional diffusion equation.
\newblock {\em ANZIAM J.}, 52(2):123--138, 2010.

\bibitem{MR4023101}
W.~McLean, K.~Mustapha, R.~Ali, and O.~Knio.
\newblock Well-posedness of time-fractional advection-diffusion-reaction equations.
\newblock {\em Fract. Calc. Appl. Anal.}, 22(4):918--944, 2019.

\bibitem{MR4054212}
W.~McLean, K.~Mustapha, R.~Ali, and O.~Knio.
\newblock Regularity theory for time-fractional advection-diffusion-reaction equations.
\newblock {\em Comput. Math. Appl.}, 79(4):947--961, 2020.

\bibitem{Merton1976}
R.~C. Merton.
\newblock {Option pricing when underlying stock returns are discontinuous}.
\newblock {\em Journal of Financial Economics}, 3(1-2):125--144, 1976.

\bibitem{mohapatra2024analytical}
J.~Mohapatra, S.~Santra, and H.~Ramos.
\newblock Analytical and numerical solution for the time fractional black-scholes model under jump-diffusion.
\newblock {\em Computational Economics}, 63(5):1853--1878, 2024.

\bibitem{MR3802434}
K.~Mustapha.
\newblock F{EM} for time-fractional diffusion equations, novel optimal error analyses.
\newblock {\em Math. Comp.}, 87(313):2259--2272, 2018.

\bibitem{MR4090355}
K.~Mustapha.
\newblock An {$L1$} approximation for a fractional reaction-diffusion equation, a second-order error analysis over time-graded meshes.
\newblock {\em SIAM J. Numer. Anal.}, 58(2):1319--1338, 2020.

\bibitem{MR4749244}
D.~Prathumwan, T.~Khonwai, N.~Phoochalong, I.~Chaiya, and K.~Trachoo.
\newblock An improved approximate method for solving two-dimensional time-fractional-order {B}lack-{S}choles model: a finite difference approach.
\newblock {\em AIMS Math.}, 9(7):17205--17233, 2024.

\bibitem{MR483555}
P.~A. Raviart and J.~M. Thomas.
\newblock A mixed finite element method for 2nd order elliptic problems.
\newblock In {\em Mathematical aspects of finite element methods ({P}roc. {C}onf., {C}onsiglio {N}az. delle {R}icerche ({C}.{N}.{R}.), {R}ome, 1975)}, Lecture Notes in Math., Vol. 606, pages 292--315. Springer, Berlin-New York, 1977.

\bibitem{MR4199354}
J.~Ren, H.-L. Liao, J.~Zhang, and Z.~Zhang.
\newblock Sharp {$H^1$}-norm error estimates of two time-stepping schemes for reaction-subdiffusion problems.
\newblock {\em J. Comput. Appl. Math.}, 389:Paper No. 113352, 17, 2021.

\bibitem{sawangtong2018analytical}
P.~Sawangtong, K.~Trachoo, W.~Sawangtong, and B.~Wiwattanapataphee.
\newblock The analytical solution for the black-scholes equation with two assets in the liouville-caputo fractional derivative sense.
\newblock {\em Mathematics}, 6(8):129, 2018.

\bibitem{MR2551194}
R.~K. Sinha, R.~E. Ewing, and R.~D. Lazarov.
\newblock Mixed finite element approximations of parabolic integro-differential equations with nonsmooth initial data.
\newblock {\em SIAM J. Numer. Anal.}, 47(5):3269--3292, 2009.

\bibitem{MR4499608}
M.~Stynes.
\newblock A survey of the {L}1 scheme in the discretisation of time-fractional problems.
\newblock {\em Numer. Math. Theory Methods Appl.}, 15(4):1173--1192, 2022.

\bibitem{MR3639581}
M.~Stynes, E.~O'Riordan, and J.~Gracia.
\newblock Error analysis of a finite difference method on graded meshes for a time-fractional diffusion equation.
\newblock {\em SIAM J. Numer. Anal.}, 55(2):1057--1079, 2017.

\bibitem{MR2249024}
V.~Thom\'{e}e.
\newblock {\em Galerkin finite element methods for parabolic problems}, volume~25 of {\em Springer Series in Computational Mathematics}.
\newblock Springer-Verlag, Berlin, second edition, 2006.

\bibitem{TOMAR2024137}
A.~Tomar, L.~P. Tripathi, and A.~K. Pani.
\newblock Optimal error estimates of a non-uniform {IMEX-L1} finite element method for time fractional {PDEs} and {PIDEs}.
\newblock {\em Applied Numerical Mathematics}, 205:137--168, 2024.

\bibitem{MR3959545}
W.~Wang, Y.~Chen, and H.~Fang.
\newblock On the variable two-step {IMEX} {BDF} method for parabolic integro-differential equations with nonsmooth initial data arising in finance.
\newblock {\em SIAM J. Numer. Anal.}, 57(3):1289--1317, 2019.

\bibitem{MR1743405}
W.~Wyss.
\newblock The fractional {B}lack-{S}choles equation.
\newblock {\em Fract. Calc. Appl. Anal.}, 3(1):51--61, 2000.

\bibitem{MR3592147}
Y.~Zhao, P.~Chen, W.~Bu, X.~Liu, and Y.~Tang.
\newblock Two mixed finite element methods for time-fractional diffusion equations.
\newblock {\em J. Sci. Comput.}, 70(1):407--428, 2017.

\end{thebibliography}

\appendix

\section{Proof of Lemma~\ref{dfdoi}}\label{proofofLemma3.1}
\begin{proof}
The definition of the discrete fractional differential operator $D_{t_n}^\alpha$ along with the property (\ref{Kproperty}) yields the first estimate. Now, we obtain the second estimate as follows-
\begin{align}\label{pollutionterm1}
\nonumber&\left(D_{t_{n}}^{\alpha}\boldsymbol{B}^n\phi^{n},\phi^{n}\right) =K_{1-\alpha}^{n,n}(\boldsymbol{B}^n\phi^n, \phi^{n} )-\sum_{j=1}^{n-1} (K_{1-\alpha}^{n,j+1} - K_{1-\alpha}^{n,j})(\boldsymbol{B}^j\phi^j , \phi^{n} ) - K_{1-\alpha}^{n,1}(\boldsymbol{B}^0\phi^0 , \phi^{n} )\\
 \nonumber& \geq K_{1-\alpha}^{n,n}\vertiii{ \phi^n}_n^2 -\sum_{j=1}^{n-1} (K_{1-\alpha}^{n,j+1} - K_{1-\alpha}^{n,j})\vertiii{ \phi^j}_j \vertiii{\phi^n}_j - K_{1-\alpha}^{n,1}\vertiii{\phi^0}_0 \vertiii{ \phi^n}_0\\
\nonumber&\geq K_{1-\alpha}^{n,n}\vertiii{\phi^n}^2_n  - \frac{1}{2}\sum_{j=1}^{n-1} (K_{1-\alpha}^{n,j+1} - K_{1-\alpha}^{n,j})\vertiii{\phi^n}^2_j   \\
\nonumber&-\frac{1}{2}\sum_{j=1}^{n-1} (K_{1-\alpha}^{n,j+1} - K_{1-\alpha}^{n,j})\vertiii{\phi^j}^2_j- \frac{1}{2}K_{1-\alpha}^{n,1}\vertiii{\phi^0}^2_0 -\frac{1}{2}K_{1-\alpha}^{n,1}\vertiii{\phi^n}^2_0\\
\nonumber&=\frac{1}{2}D_{t_{n}}^{\alpha}\vertiii{\phi^{n}}_n^{2}-\frac{1}{2} \sum_{j=1}^{n-1} (K_{1-\alpha}^{n,j+1} - K_{1-\alpha}^{n,j})\vertiii{\phi^n}^2_j- \frac{1}{2}K_{1-\alpha}^{n,1}\vertiii{\phi^n}^2_0+ \frac{1}{2}K_{1-\alpha}^{n,n}\vertiii{\phi^n}^2_{n}\\
\nonumber&=\frac{1}{2}D_{t_{n}}^{\alpha}\vertiii{\phi^{n}}_n^{2}-\frac{1}{2} \sum_{j=0}^{n-1} K_{1-\alpha}^{n,j+1}\vertiii{\phi^n}^2_j + \frac{1}{2} \sum_{j=1}^{n}K_{1-\alpha}^{n,j}\vertiii{\phi^n}^2_j\\
&=\frac{1}{2}D_{t_{n}}^{\alpha}\vertiii{\phi^{n}}_n^{2} + \frac{1}{2} \sum_{j=0}^{n-1} K_{1-\alpha}^{n,j+1}(\vertiii{\phi^n}^2_{j+1}-\vertiii{\phi^n}^2_{j}). 
\end{align}
Further, an application of (\ref{Blips}) implies that
\begin{align}\label{lipschitzineq}
\Big|\vertiii{\phi^n}^2_{j+1}-\vertiii{\phi^n}^2_{j}\Big| \leq \|\boldsymbol{B}^{j+1}-\boldsymbol{B}^j\|_{L^{\infty}(\Omega;\mathbb{R}^{d\times d})}\|\phi^n\|^2 \leq L_B \Delta t_{j+1} \|\phi^n\|^2 \leq \frac{L_B}{\beta_0} \Delta t_{j+1} \vertiii{\phi^n}_n^2   
\end{align}
and using the definition (\ref{discretekernel}), we obtain
\begin{align}\label{coefficientbound}
  \sum_{j=0}^{n-1}K_{1-\alpha}^{n,j+1}(t_{j+1} - t_{j}) &=\sum_{j=0}^{n-1}\int_{t_{j}}^{t_{j+1}}k_{1-\alpha}(t_n-s)ds = \int_0^{t_{n}}k_{1-\alpha}(t_n-s)ds = \frac{t_{n}^{1-\alpha}}{\Gamma(2-\alpha)}.
\end{align} 
Now, a use of (\ref{lipschitzineq}) with (\ref{pollutionterm1}) completes the rest of the proof.
\end{proof}

\end{document}